% !TeX spellcheck = en_US
\documentclass[a4paper,10pt]{article}

\usepackage{geometry}
\geometry{
  a4paper, % oppure letter etc
%  tmargin=4.2cm,
%  bmargin=4.2cm,
%  lmargin=4.2cm,
%  rmargin=4.2cm,
%  hscale=0.65,
%  vscale=0.75,
%  twosideshift=30pt
}

\usepackage{epsfig}
\usepackage{amsmath}
\usepackage{amssymb}
\usepackage{amsthm}
\usepackage{mathrsfs}
\usepackage{color}
\usepackage{tikz-cd}
\usepackage{rotating}

%%%%%%%%%%%%%%%%%%%%%%%%%%%%%
%%% ENVIRONMENTS

\usepackage{amsthm}
\newtheorem{theorem}{Theorem}[section]
\newtheorem{definition}[theorem]{Definition}
\newtheorem{lemma}[theorem]{Lemma}
\newtheorem{proposition}[theorem]{Proposition}
\newtheorem{corollary}[theorem]{Corollary}
\newtheorem{remark}[theorem]{Remark}
\newtheorem{example}[theorem]{Example}

%%%%%%%%%%%%%%%%%%%%%%%%%%%%%
%%% SYMBOLS

\usepackage{amsfonts}

\newcommand{\oo}{{\mathbb{O}}}
\newcommand{\hh}{{\mathbb{H}}}

\newcommand{\cc}{{\mathbb{C}}}
\newcommand{\rr}{{\mathbb{R}}}

\newcommand{\nn}{{\mathbb{N}}}

\newcommand{\J}{\mathbb{J}}

\newcommand{\s}{{\mathbb{S}}}
\renewcommand{\P}{{\mathbb{P}}}

\newcommand{\I}{\mathcal{I}}

\newcommand{\B}{\mathcal{B}}
\newcommand{\C}{\mathcal{C}}
\newcommand{\E}{\mathcal{E}}

\newcommand\vs[1]{{#1}_s^\circ}

\newcommand{\punto}{\bullet}

\newcommand\re{\operatorname{Re}}
\newcommand\im{\operatorname{Im}}

\newcommand{\ui}{\imath}

\newcommand{\OO}{\Omega}

\newcommand{\mr}{\mathrm}

\newcommand{\mc}{\mathcal}
\newcommand{\hslashslash}{%
  \raisebox{.9ex}{%
    \scalebox{.7}{%
      \rotatebox[origin=c]{18}{$-$}%
    }%
  }%
}
\newcommand{\fslash}{%
  {%
   \vphantom{f}%
   \ooalign{\kern.05em\smash{\hslashslash}\hidewidth\cr$f$\cr}%
   \kern.05em
  }%
}
%~

%%%%%%%%%%%%%%%%%%%%%%%%%%%%%

\title{\bf Slice regular functions and orthogonal complex structures over $\rr^8$}

\author{Riccardo Ghiloni\\
 Alessandro Perotti\\
\small Dipartimento di Matematica, Universit\`a di Trento\\ 
\small Via Sommarive 14, I-38123 Povo Trento, Italy\\
\small riccardo.ghiloni@unitn.it, alessandro.perotti@unitn.it\\
\and
Caterina Stoppato
\\ 
\small Dipartimento di Matematica e Informatica ``U. Dini'', Universit\`a di Firenze \\
\small Viale Morgagni 67/A, I-50134 Firenze, Italy\\
\small caterina.stoppato@unifi.it}

\date{  }

%%%%%%%%%%%%%%%%%%%%%%%%%%%%%%%%%%%%%%%%%%%%%%%%%%%%%%%%%%%%%%%%%%%%%%%%%%%%%%%%%%%%%%%%%%%%%%%%%%%%
%%%%%%%%%%%%%%%%%%%%%%%%%%%%%%%%%%%%%%%%%%%%%%%%%%%%%%%%%%%%%%%%%%%%%%%%%%%%%%%%%%%%%%%%%%%%%%%%%%%%

\begin{document}

\maketitle

%%%%%%%

\begin{abstract}
This work looks at the theory of octonionic slice regular functions through the lens of differential topology. It proves a full-fledged version of the Open Mapping Theorem for octonionic slice regular functions. Moreover, it opens the path for a possible use of slice regular functions in the study of almost-complex structures in eight dimensions.
\end{abstract}

\vspace{.4cm}

%%%%%%%

\thanks{\small \noindent{\bf Acknowledgements.} This work was partly supported by GNSAGA of INdAM, by the INdAM project ``Hypercomplex function theory and applications'' and by the PRIN 2017 project ``Real and Complex Manifolds'' of the Italian Ministry of Education (MIUR). The third author is also supported by Finanziamento Premiale FOE 2014 ``Splines for accUrate NumeRics: adaptIve models for Simulation Environments'' of MIUR. The authors are grateful to the anonymous referee for the precious suggestions.}

%%%%%%%%%%%%%%%%%%%%%%%

\section{Introduction}\label{sec:introduction}

The theory of slice regular functions was introduced over quaternions in~\cite{cras, advances} and largely developed in the following years. The monograph~\cite{librospringer} and subsequent works describe its many resemblances to the theory of holomorphic functions of one complex variable, as well as new interesting phenomena due to the noncommutative setting.

The theory soon found useful applications to open problems in other areas of mathematics, including the problem of classifying Orthogonal Complex Structures (OCSs) on open dense subsets $\rr^4\setminus\Lambda$ of $\rr^4$. For the definition of OCS, see Section~\ref{sec:constantoacs}. The works~\cite{wood,viaclovsky} had provided a classification for the case when $\Lambda$ has Hausdorff dimension less than $1$ and the case when $\Lambda$ is a circle or a straight line. At those times, the only function class available for classification was the class of conformal maps between open subsets of $\rr^4$. In contrast with the case of $\rr^2$, by a famous theorem due to Liouville, this class consists only of quaternionic linear fractional transformations composed with reflections. This made even the case when $\Lambda$ is a parabola unapproachable. The work~\cite{ocs} significantly widened the panorama by making the class of injective slice regular functions available as a tool for classification. This required a detailed study of the differential topology of quaternionic slice regular functions.

In the present work, we look at the theory of \emph{octonionic} slice regular functions, introduced in~\cite{rocky} and briefly described in Section~\ref{sec:preliminaries}, through the lens of differential topology. This study has an independent interest, because of the peculiar features of the nonassociative setting of octonions. We obtain a full-fledged version of the Open Mapping Theorem for octonionic slice regular functions, after the partial results of~\cite{wang,gpsdivisionalgebras}. Moreover, we open the path for a possible use of slice regular functions in the study of almost-complex structures in eight dimensions.

The paper is structured as follows.

Section~\ref{sec:preliminaries} is devoted to preliminaries. It recalls the definition and properties of the algebra of octonions and of the class of octonionic slice regular functions. It reviews known properties of constant OCSs on $\rr^{2n}$, specializing them to the cases $n=1,2,4$ of the division algebras $\cc,\hh,\oo$ of complex numbers, quaternions and octonions. Then some instrumental results are proven, which play a crucial role throughout the paper. Finally, the standard Orthogonal Almost-Complex Structure $\J$ of $\oo\setminus\rr\simeq\cc^+\times S^6$ is presented.

Section~\ref{sec:differential} is a first study of the real differential and of the real Jacobian of octonionic slice regular functions. 

Section~\ref{sec:inducedacs} studies the possibility to induce, by pushing $\J$ forward through a slice regular function $f$ at a point $x_0$, an almost-complex structure on the tangent space at $f(x_0)$.

Section~\ref{sec:singularsets} studies the singular sets $N_f$ of slice regular functions $f$ and proves the Quasi-open Mapping Theorem for these functions.

Section~\ref{sec:fibers} studies the fibers of octonionic slice regular functions and proves the aforementioned Open Mapping Theorem.

In Section~\ref{sec:branchedcoverings}, the branch set of an octonionic slice regular function $f$ is proven to coincide with its singular set. This makes it possible to push $\J$ forward through any injective slice regular function $f:\OO\to\oo$ and induce an almost-complex structure on $f(\OO\setminus\rr)$.

%%%%%%%%%%%%%%%%%%%%%%%

\section{Preliminaries}\label{sec:preliminaries}

\subsection{The real algebra of octonions}

Let $\oo$ denote the $^*$-algebra of octonions, built by iterating the so-called Cayley-Dickson construction:
\begin{itemize}
\item $\cc=\rr+i\rr$, $(\alpha+i\beta)(\gamma+i\delta)=\alpha\gamma-\beta\delta+i(\alpha\delta+\beta\gamma)$, $(\alpha+i\beta)^c=\alpha-i\beta\ \forall\,\alpha,\beta,\gamma,\delta\in\rr$;
\item $\hh=\cc+j\cc$, $(\alpha+j\beta)(\gamma+j\delta)=\alpha\gamma-\beta^c\delta+j(\alpha^c\delta+\beta\gamma)$, $(\alpha+j\beta)^c=\alpha^c-j\beta\ \forall\,\alpha,\beta,\gamma,\delta\in\cc$;
\item $\oo=\hh+\ell\hh$, $(\alpha+\ell \beta)(\gamma+\ell \delta)=\alpha\gamma-\delta\beta^c+\ell(\alpha^c\delta+\gamma\beta)$, $(\alpha+\ell \beta)^c=\alpha^c-\ell \beta\ \forall\,\alpha,\beta,\gamma,\delta\in\hh$.
\end{itemize}
We will now quickly overview the properties of $\oo$, referring the reader to~\cite{baez,ebbinghaus,schafer,libroward} for more details.

$\oo$ is a non commutative and non associative unitary real algebra. The associative nucleus and the center of $\oo$ both coincide with the subalgebra generated by $1$, which is denoted simply by $\rr$. Although $\oo$ is not associative, it is \emph{alternative}: the associator $(x,y,z)=(xy)z-x(yz)$ of three elements vanishes whenever two of them coincide. Alternativity implies several properties, including the following.
\begin{itemize}
\item {[Moufang identities]} For all elements $a,x,y$ of an alternative algebra,
\begin{eqnarray}
(xax)y &=& x(a(xy))\,,\label{moufang1}\\
y(xax) &=& ((yx)a)x\,,\label{moufang2}\\
(xy)(ax) &=& x(ya) x\,.\label{moufang3}
\end{eqnarray}
\item {[Artin's Theorem]} In an alternative algebra, the subalgebra generated by any two elements is associative.
\item {[Power associativity]} For all $x$ in an alternative algebra, $(x,x,x)=0$, so that the expression $x^n$ can be written unambiguously for all $n \in \nn$.
\end{itemize}

$\oo$ is a \emph{$^*$-algebra} over $\rr$ because the map $x\mapsto x^c$ is a \emph{$^*$-involution}, i.e., an $\rr$-linear transformation with the following properties: $(x^c)^c=x$ and $(xy)^c=y^cx^c$ for every $x,y$; $x^c=x$ for every $x \in \rr$. We point out that $(r+v)^c=r-v$ for all $r \in \rr$ and all $v$ in the Euclidean orthogonal complement of $\rr$. The \emph{norm} function $n(x):=xx^c$ coincides with the squared Euclidean norm $\Vert x\Vert^2$  and $n(xy) = n(x) n(y)$ for all $x,y \in \oo$; the \emph{trace} function $t(x):=x+x^c$ has $t(xy^c)$ equal to twice the standard scalar product $\langle x,y\rangle$ of $\oo=\rr^8$.

$\oo$ is a division algebra because every nonzero element $x$ has a multiplicative inverse, namely $x^{-1} = n(x)^{-1} x^c = x^c\, n(x)^{-1}$. For all $x,y\in\oo$:
\begin{itemize}
\item if $x\neq0$ then $(x^{-1},x,y)=0$;
\item if $x,y\neq 0$ then $(xy)^{-1} = y^{-1}x^{-1}$.
\end{itemize}
Well-known results due to Frobenius and Zorn state that $\rr,\cc,\hh$ and $\oo$ are the only (finite-dimensional) alternative division algebras.

The set of octonionic \emph{imaginary units}
\begin{equation}
\s=\s_\oo:=\{x \in \oo\, | \, t(x)=0, n(x)=1\} = \{w \in \oo \, |\, w^2=-1\}\,,
\end{equation}
is a $6$-dimensional sphere. The $^*$-subalgebra generated by any $J \in \s$, i.e., $\cc_J:=\rr+J\rr$, is $^*$-isomorphic to the complex field $\cc$ (endowed with the standard multiplication and conjugation) through the $^*$-isomorphism 
\[\phi_J\ :\ \cc\to\cc_J\,,\quad \alpha+i\beta\mapsto\alpha+\beta J\,.\]
It holds that
\begin{equation} \label{eq:slice}
%\textstyle
\oo=\text{$\bigcup_{J \in \s}\cc_J$}
\end{equation}
and $\cc_I \cap \cc_J=\rr$ for every $I,J \in \s$ with $I \neq \pm J$. As a consequence, every element $x$ of $\oo \setminus \rr$ can be written as follows: $x=\alpha+\beta J$, where $\alpha \in \rr$ is uniquely determined by $x$, while $\beta \in \rr$ and $J \in \s$ are uniquely determined by $x$, but only up to sign. If $x \in \rr$, then $\alpha=x$, $\beta=0$ and $J$ can be chosen arbitrarily in $\s$. Therefore, it makes sense to define the \emph{real part} $\re(x)$ and the \emph{imaginary part} $\im(x)$ by setting $\re(x):=t(x)/2=(x+x^c)/2 = \alpha$ and $\im(x):=x-\re(x)=(x-x^c)/2=\beta J$. It also makes sense to call the Euclidean norm $\Vert x\Vert=\sqrt{n(x)}=\sqrt{a^2+\beta^2}$ the \emph{modulus} of $x$ and to denote it as $|x|$.  The \emph{vector product} of two elements $v,w\in\im(\oo)$ is defined as $v\times w:=\im(vw)=\langle v,w\rangle+vw$. For all $x,y,z\in\oo$, the associator $(x,y,z)=(xy)z-x(yz)$ has the property $\re((x,y,z))=0$ (see \cite[page 188]{baez}). The algebra $\oo$ has the following useful property.
\begin{itemize}
\item {[Splitting property]} For each imaginary unit $J \in \s$, there exist $J_1,J_2,J_3 \in \oo$ such that $\{1,J,J_1,JJ_1,J_2,JJ_2,J_3,JJ_3\}$ is a real vector basis of $\oo$, called a \emph{splitting basis} of $\oo$ associated to $J$. We call a splitting basis $\{1,J,J_1,JJ_1,J_2,JJ_2,J_3,JJ_3\}$ of $\oo$ \emph{distinguished} if there exists a real $^*$-algebra isomorphism $\oo\to\oo$ mapping $1,J_1,J_2,J_3,J$ to $1,i,j,k,\ell$, respectively.
\end{itemize}
The existence, for each $J\in\s$, of a distinguished splitting basis $\{1,J,J_1,JJ_1,J_2,JJ_2,J_3,JJ_3\}$ of $\oo$ follows from~\cite[Propositions 2.5 and 2.6]{rocky}.

On the $8$-dimensional real vector space $\oo$, we consider the natural Euclidean topology and differential structure. The relative topology on each $\cc_J$ with $J \in \s$ clearly agrees with the topology determined by the natural identification between $\cc_J$ and $\cc$, i.e., $\phi_J$ is a homeomorphism. Given a subset $E$ of $\cc$, its \emph{circularization} $\OO_E$ is defined as the following subset of $\oo$:
\[
\OO_E:=\left\{x \in \oo \, \big| \, \exists \alpha,\beta \in \rr, \exists J \in \s \mathrm{\ s.t.\ } x=\alpha+\beta J, \alpha+i\beta \in E\right \}\,.
\]
A subset of $\oo$ is termed \emph{circular} if it equals $\OO_E$ for some $E\subseteq\cc$. For instance, given $x=\alpha+\beta J \in \oo$, we have that
\[
\s_x:=\alpha+\beta \, \s=\{\alpha+\beta I \in \oo \, | \, I \in \s\}
\]
is circular, as it is the circularization of the singleton $\{\alpha+i\beta\}\subseteq \cc$. We observe that $\s_x=\{x\}$ if $x \in \rr$. On the other hand, for $x \in \oo \setminus \rr$, the set $\s_x$ is obtained by real translation and dilation from the sphere $\s$. If $D$ is a non-empty subset of $\cc$ that is invariant under the complex conjugation $z=\alpha+i\beta \mapsto \overline{z}=\alpha-i\beta$, then for each $J \in \s$ the map $\phi_J$ naturally embeds $D$ into a ``slice'' of $\OO_D=\bigcup_{J\in\s}\phi_J(D)$, that is, $\phi_J(D)=\OO_D\cap\cc_J$.

%%%

\subsection{Slice regular functions}

We now overview the definition of slice regular function given in~\cite{perotti} and recall some useful properties of these functions. Consider the complexified $^*$-algebra $\oo_{\cc}=\oo \otimes_{\rr} \cc=\{x+\ui y \, | \, x,y \in \oo\}$, with
\[
(x+\ui y)(x'+\ui y'):=xx'-yy'+\ui (xy'+yx'),\quad (x+\ui y)^c := x^c+\ui y^c\,.
\]
We also set $\overline{x+\ui y}:=x-\ui y$. The center of $\oo_{\cc}$ is the real $^*$-subalgebra $\rr_\cc=\rr+\ui\rr$. If we identify $\cc$ with $\rr_\cc$ then, for all $J\in\s$, the previously defined map $\phi_J:\cc\to\cc_J$ extends to
\[\phi_J\ :\ \oo_\cc \to \oo,\quad x+\ui y\mapsto x+Jy\,.\]

\begin{definition} \label{def:slice-function}
Let $D$ be a non-empty subset of $\cc$ preserved by complex conjugation and consider its circularization $\OO_D$. A function $F:D\to\oo_\cc$ is a \emph{stem function} if $F(\overline{z})=\overline{F(z)}$ for every $z \in D$. A function $f:\OO_D \to \oo$ is a \emph{(left) slice function} if there exists a stem function $F:D \to \oo_\cc$ such that the diagram
\begin{equation}\label{lifting}
\begin{tikzcd}
D\arrow{d}{\phi_J}\arrow{r}{F}&\oo_\cc\arrow{d}{\phi_J}\\
\OO_D\arrow{r}{f}&\oo
\end{tikzcd}
\end{equation}
commutes for each $J\in\s$. In this situation, we say that $f$ is induced by $F$ and we write $f=\I(F)$. If $F$ is $\rr_\cc$-valued, then we say that the slice function $f$ is \emph{slice preserving}.  If $\OO:=\OO_D$, we denote by $\mc{S}(\OO)$ the set of all slice functions from $\OO$ to $\oo$; we denote by $\mc{S}_\rr(\OO)$ the subset of $\mc{S}(\OO)$ formed by all slice preserving functions.
\end{definition}

The term `slice preserving' is justified by the following property (cf.~\cite[Proposition~10]{perotti}): a stem function $F$ is $\rr_\cc$-valued if, and only if, the slice function $f=\I(F)$ maps every ``slice'' $\phi_J(D)$ into $\cc_J$. We point out that each slice function $f$ is induced by a unique stem function $F$.

The algebraic structure of slice functions can be described as follows, see \cite[\S 2]{gpsalgebra}.

\begin{proposition}\label{prop:algebra}
The set $\mr{Stem}(D,\oo_\cc)$ of all stem functions from $D$ to $\oo_\cc$ is an alternative $^*$-algebra over $\rr$ with pointwise addition $(F+G)(z) = F(z)+G(z)$, multiplication $(FG)(z) = F(z)G(z)$ and conjugation $F^c(z) = F(z)^c$. The center of this $^*$-algebra comprises all stem functions from $D$ to $\rr_\cc$. Let $\OO:=\OO_D$ and consider the mapping
\[\I\ : \mr{Stem}(D,\oo_\cc)\to\mc{S}(\OO)\,.\]
Besides the pointwise addition $(f,g) \mapsto f + g$, there exist unique operations of multiplication $(f,g) \mapsto f \cdot g$ and conjugation $f \mapsto f^c$ on $\mc{S}(\OO)$ so that the mapping $\I$ is a $^*$-algebra isomorphism. The center of this $^*$-algebra coincides with the $^*$-subalgebra $\mc{S}_\rr(\OO)$ of slice preserving functions.
\end{proposition}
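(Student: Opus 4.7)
The plan is to dispatch the statement in three steps: first endow $\mathrm{Stem}(D,\oo_\cc)$ with a pointwise alternative $^*$-algebra structure, then establish that $\I$ is a bijection, and finally transport the structure to $\mc{S}(\OO)$.

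I begin by noting that $\oo_\cc$ is itself an alternative $^*$-algebra: the alternative identity and its linearization are polynomial in the generators, so they extend from $\oo$ to $\oo_\cc$ by $\cc$-bilinearity, and the involution $(x+\ui y)^c := x^c + \ui y^c$ inherits the axioms of a $^*$-involution from $\oo$. Pointwise addition, multiplication and conjugation then make $\mathrm{Stem}(D,\oo_\cc)$ an alternative $^*$-algebra over $\rr$, provided the stem-function condition $F(\bar z) = \overline{F(z)}$ (with $\overline{x+\ui y} := x - \ui y$) is preserved. Two facts about $\oo_\cc$ make this automatic: $\overline{\cdot}$ is an $\rr$-algebra homomorphism (a direct computation with the definition of the product gives $\overline{ab} = \overline{a}\,\overline{b}$, not $\overline{b}\,\overline{a}$), and it commutes with $\cdot^c$. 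Centrality of $\mathrm{Stem}(D,\rr_\cc)$ is inherited from the centrality of $\rr_\cc$ in $\oo_\cc$.

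The bijectivity of $\I$ is the heart of the argument. Well-definedness of $f = \I(F)$ requires that the assignment $\alpha+\beta J \mapsto \phi_J(F(\alpha+i\beta))$ be independent of the ambiguity $(\beta,J) \leftrightarrow (-\beta,-J)$ present for $x\notin\rr$; this reduces to the one-line identity $\phi_{-J}(\overline{w}) = \phi_J(w)$ for $w \in \oo_\cc$. Injectivity is the uniqueness of the generating stem function: on real points $z \in D\cap\rr$ the stem condition forces $F(z)\in\rr$ and hence $F(z) = f(z)$, while for $z = \alpha+i\beta$ with $\beta\neq 0$ and any fixed $J\in\s$ the two values $f(\alpha\pm\beta J)$ determine the real and imaginary components of $F(z) \in \oo + \ui\oo$ via a nondegenerate $\rr$-linear system. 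Surjectivity is built into the definition of slice function.

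With $\I$ bijective, the operations on $\mc{S}(\OO)$ are forced to be $f\cdot g := \I\!\left(\I^{-1}(f)\,\I^{-1}(g)\right)$ and $f^c := \I\!\left(\I^{-1}(f)^c\right)$; pointwise addition on $\mc{S}(\OO)$ agrees with $\I(F+G)$ because each $\phi_J$ is $\rr$-linear. Transporting the center through the isomorphism gives $\mc{S}_\rr(\OO) = \I(\mathrm{Stem}(D,\rr_\cc)) \subseteq Z(\mc{S}(\OO))$. There is no single hard computational step; the only conceptual pitfall, and the reason the product must be defined through stem functions rather than pointwise, is the non-associativity of $\oo$, which makes the pointwise product of two slice functions generally fail to be a slice function. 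Every algebraic identity in $\mc{S}(\OO)$ should therefore be verified through $\I$ rather than by pointwise evaluation.
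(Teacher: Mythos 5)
Your argument is correct and follows essentially the same route as the proof the paper delegates to \cite[\S 2]{gpsalgebra} (see also \cite{perotti}): pointwise operations on $\mr{Stem}(D,\oo_\cc)$, preservation of the stem condition via $\overline{ab}=\overline{a}\,\overline{b}$ and the compatibility of the bar with $\cdot^c$, bijectivity of $\I$, and transport of structure. Two harmless slips worth fixing: at real $z$ the stem condition forces $F(z)\in\oo$ rather than $F(z)\in\rr$ (which is all you need to get $F(z)=f(z)$), and the pointwise product of slice functions already fails to be a slice function because of noncommutativity (the same phenomenon occurs over $\hh$), not specifically because of the nonassociativity of $\oo$.
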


If $f$ is slice preserving then $(f \cdot g)(x)=(g \cdot f)(x)=f(x)g(x)$ but neither equality holds in general. The pointwise product $x\mapsto f(x)g(x)$ is denoted as $fg$. The \emph{normal function} of $f$ in $\mc{S}(\OO)$, defined as
\[
N(f)=f \cdot f^c=\I(FF^c),
\]
is a slice preserving function. It coincides with $f^2$ if $f$ is slice preserving.

It is also useful to define the \emph{spherical value}, $\vs f:\OO \to \oo$, and the \emph{spherical derivative}, $f'_s:\OO \setminus \rr \to \oo$, of any $f\in\mc{S}(\OO)$ by setting
\[
\vs f(x):=\frac{1}{2}(f(x)+f(x^c))
\quad \text{and} \quad
f'_s(x):=\frac{1}{2}\im(x)^{-1}(f(x)-f(x^c)).
\] 
The original article~\cite{perotti} and subsequent papers used the notations $v_s f$ and $\partial_s f$, respectively, and remarked that $\vs  f \in \mc{S}(\OO),f'_s \in \mc{S}(\OO \setminus \rr)$. For all $x \in \OO\setminus\rr$ it holds that
\begin{equation}\label{eq:representation}
f(x)=\vs  f(x) + (\im \cdot\, f'_s)(x) = \vs  f(x)+ \im(x)  f'_s(x),
\end{equation}
where the second equality holds because $\im$ is a slice preserving element of $\mc{S}(\oo)$. Notice that $f$ is slice preserving if, and only if, $\vs f$ and $f'_s$ are real-valued.

Among octonionic slice functions, we consider a special class having nice properties that recall those of holomorphic functions of a complex variable. This class of functions was introduced in~\cite{rocky}, although we follow here the presentation of~\cite{perotti}. If $\OO = \OO_D$ is open, then for any $J \in \s$ the slice $\OO_J:=\OO\cap\cc_J=\phi_J(D)$ is open in the relative topology of $\cc_J$; therefore, $D$ itself is open. In this case, within the $^*$-algebra of stem functions we can consider the $^*$-subalgebras of continuous, continuously differentiable, real analytic and holomorphic stem functions $F:D\to \oo_\cc$, the last being defined by the condition $\frac{\partial F}{\partial\overline{z}}\equiv0$ with
\[
\frac{\partial F}{\partial\overline{z}}:=\frac{1}{2}\left(\frac{\partial F}{\partial\alpha}+\ui\frac{\partial F}{\partial\beta}\right).
\]
These four $^*$-subalgebras induce, through the $^*$-isomorphism $\I$, four $^*$-subalgebras of $\mc{S}(\OO)$ that we denote by $\mc{S}^0(\OO), \mc{S}^1(\OO), \mc{S}^\omega(\OO)$ and $\mc{SR}(\OO)$, respectively. A function $f:\OO \to \oo$ is called \emph{slice regular} if it belongs to $\mc{SR}(\OO)$. Equivalently, $f = \I(F) \in \mc{S}^1(\OO)$ is slice regular if 
\[\partial f/\partial x^c:=\I(\partial F/\partial \overline{z})\]
vanishes identically in $\OO$. The analogously defined function 
\[f'_c=\partial f/\partial x:=\I(\partial F/\partial z)\]
on $\OO$ is called the \emph{slice derivative} (or complex derivative) of $f$. The following Leibniz rules hold:
\begin{align}
&(f\cdot g)'_c=f'_c\cdot g+f\cdot g'_c\,,\label{eq:leibnizcullen}\\
&(f\cdot g)^\circ_s=f^\circ_s\cdot g^\circ_s+\im^2f'_s\cdot g'_s\,,\quad (f\cdot g)'_s=f'_s\cdot g^\circ_s+f^\circ_s\cdot g'_s\,,\label{eq:leibnizspherical}
\end{align}
where we point out that $\im^2(\alpha+\beta J)=-\beta^2$ for all $\alpha,\beta\in\rr$ and all $J\in\s$. For all $x\in\OO\setminus\rr$, it also holds:
\begin{align}
&(f^c)^\circ_s(x)=f^\circ_s(x)^c\,,\quad (f^c)'_s(x)=f'_s(x)^c\label{eq:conjugatespherical}\\
&(N(f))^\circ_s(x)=n(f^\circ_s(x))+\im(x)^2n(f'_s(x))\,,\quad (N(f))'_s(x)=t(f^\circ_s(x)f'_s(x)^c)\,,\label{eq:normalspherical}
\end{align}

Because we are now working with an open $D$, we can decompose it into a disjoint union of open subsets $D_t\subseteq\cc$, each of which either
\begin{enumerate}
\item intersects the real line $\rr$, is connected and preserved by complex conjugation; or
\item does not intersect $\rr$ and has two connected components $D^+_t,D^-_t$, switched by complex conjugation.
\end{enumerate}
In the former case, the resulting $\OO_{D_t}$ is called a \emph{slice domain} because each intersection $\OO_{D_t}\cap\cc_J$ with $J \in \s$ is a domain in the complex sense (more precisely, it is an open connected subset of $\cc_J$). In case 2, we will call $\OO_{D_t}$ a \emph{product domain} as it is homeomorphic to the topological product between the complex domain $D^+_t$ and the sphere $\s$. Thus, any mention of $\mc{SR}(\OO)$ will imply that $\OO$ is a disjoint union of slice domains and product domains within the algebra $\oo$.

Polynomials and convergent power series are examples of slice regular functions, see~\cite[Theorem 2.1]{rocky}.

\begin{proposition}
Every polynomial of the form $\sum_{m=0}^n x^ma_m = a_0+x a_1 + \ldots +x^n a_n$ with coefficients $a_0, \ldots, a_n \in \oo$ is a slice regular function on $\oo$. Every power series of the form $\sum_{n \in \nn} x^n a_n$ converges in a ball $B(0, R) = \{x \in \oo\, | \,\Vert x \Vert<R\}$. If $R>0$, then the sum of the series is a slice regular function on $B(0,R)$.
\end{proposition}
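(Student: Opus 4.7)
The plan is to work entirely at the level of stem functions and use the $^*$-algebra isomorphism $\I$ of Proposition~\ref{prop:algebra}. First, I would show that for each $m\in\nn$ the monomial $x\mapsto x^m$ is slice regular by exhibiting the holomorphic stem function $F_m(z):=z^m\in\rr_\cc\subseteq\oo_\cc$. Since $\phi_J:\cc\to\cc_J$ is a $^*$-algebra isomorphism and the subalgebra $\cc_J$ is associative (or by power associativity), $\phi_J(z^m)=\phi_J(z)^m=(\alpha+J\beta)^m$ for $z=\alpha+\ui\beta$, so $\I(F_m)(x)=x^m$. Each constant $a_m\in\oo$ is a slice function with the constant stem function $z\mapsto a_m$, clearly holomorphic. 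By Proposition~\ref{prop:algebra}, the slice product $\I(F_m)\cdot\I(a_m)$ has stem function $z\mapsto z^ma_m$; writing $z^m=c_m+\ui d_m$ with $c_m,d_m\in\rr$ (both in the center of $\oo$), one gets $\phi_J(z^ma_m)=(c_m+Jd_m)a_m=(\alpha+J\beta)^m a_m$, which agrees with the naive pointwise map $x\mapsto x^ma_m$. Summing over $m$ gives the polynomial statement, and the stem function $\sum_{m=0}^n z^m a_m$ is clearly holomorphic on $\cc$.

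For the power series, I would invoke the multiplicativity of the octonionic norm, $|xy|=|x|\,|y|$, to obtain $\|x^na_n\|=|x|^n|a_n|$. A standard Cauchy--Hadamard argument then gives
\[
R:=\bigl(\textstyle\limsup_n|a_n|^{1/n}\bigr)^{-1}\in[0,+\infty]
\]
as the radius of absolute and locally uniform convergence of $\sum_{n\in\nn}x^na_n$ on $B(0,R)=\{x\in\oo:\|x\|<R\}$. The same estimate, applied with $x=z\in\cc$, shows that the stem series $F(z):=\sum_{n\in\nn}z^na_n$ converges absolutely and locally uniformly on the disc $\D(0,R)\subseteq\cc$, and hence defines a holomorphic $\oo_\cc$-valued function there (with $\oo_\cc$ a finite-dimensional normed space over $\rr$). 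The stem condition $F(\overline z)=\overline{F(z)}$ follows term-by-term from the fact that $a_n$ lies in the real subspace $\oo\subseteq\oo_\cc$ and $\overline{z^n a_n}=\overline{z}^na_n$.

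Finally, I would identify $\I(F)(x)=\sum_{n\in\nn}x^na_n$ pointwise on $\OO_{\D(0,R)}=B(0,R)$: by the computation in the monomial step, the partial sums satisfy $\phi_J\bigl(\sum_{n=0}^{N}z^na_n\bigr)=\sum_{n=0}^{N}(\alpha+J\beta)^na_n$ for $x=\alpha+J\beta$; passing to the limit (and using continuity of $\phi_J$, which is $\rr$-linear on a finite-dimensional space) concludes the identification. Consequently $\sum_{n\in\nn}x^na_n$ is the slice function induced by a holomorphic stem function on $B(0,R)$, and thus belongs to $\mc{SR}(B(0,R))$. The main obstacle is purely bookkeeping rather than conceptual: one must verify at the outset that the slice $^*$-product $\I(F_m)\cdot\I(a_m)$ actually coincides with the pointwise product $x\mapsto x^ma_m$ despite the nonassociativity of $\oo$, which works because $z^m\in\rr_\cc$ lies in the center. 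Every other step parallels the classical complex-analytic argument, with the multiplicativity of the octonionic norm inserted at precisely the right moment to bypass noncommutativity.
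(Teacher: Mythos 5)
Your argument is correct. Note that the paper itself does not prove this proposition but simply cites \cite[Theorem 2.1]{rocky}, where slice regularity of polynomials and power series is verified by checking the Cauchy--Riemann equation $\bar\partial_J f=0$ slice by slice; your proof instead unpacks the stem-function definition used in the present paper (Definition~\ref{def:slice-function} and Proposition~\ref{prop:algebra}), exhibiting the holomorphic stem function $z\mapsto\sum_n z^n a_n$ and identifying $\I(F)$ with the pointwise series. The two routes are equivalent in substance --- on each slice $\cc_J$ both reduce to the observation that $x\mapsto\sum_n x^n a_n$ restricts to a $\cc_J$-holomorphic map with right octonionic coefficients --- but yours has the advantage of being self-contained in the framework actually adopted here, and you correctly isolate the only nonassociativity issue (that $z^n\in\rr_\cc$ is central, so the induced function is the genuine pointwise product $x^n a_n$) and the only analytic ingredients needed (multiplicativity of the norm for Cauchy--Hadamard, locally uniform convergence to pass holomorphy and the stem condition to the limit).
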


Actually, $\mc{SR}(B(0,R))$ coincides with the $^*$-algebra of power series converging in $B(0, R)$ with the operations
\[\left(\sum_{n \in \nn} x^n a_n\right)\cdot\left(\sum_{n \in \nn} x^n b_n\right) = \sum_{n \in \nn} x^n \sum_{k=0}^na_kb_{n-k}\,,\]
\[\left(\sum_{n \in \nn} x^n a_n\right)^c= \sum_{n \in \nn} x^n a_n^c\,.\]
This is a consequence of~\cite[Theorem 2.12]{rocky}. With the same operations, the polynomials over $\oo$ form a $^*$-subalgebra of the $^*$-algebra $\mc{SR}(\oo)$ of \emph{octonionic entire functions}. Any octonionic (convergent) power series or polynomial is slice preserving if, and only if, its coefficients are real.

\begin{example}\label{ex:Delta}
If we fix an octonion $y$, the binomial $f(x) := x-y$ is a slice regular function on $\oo$. The conjugate function is $f^c(x) = x-y^c$ and the normal function $N(f)(x) = (x-y) \cdot (x-y^c) = x^2-x(y+y^c)+yy^c$ coincides with the slice preserving quadratic polynomial
\[
\Delta_y(x):=x^2-xt(y)+n(y)\,.
\]
If $y' \in\oo$, then $\Delta_{y'}=\Delta_y$ if and only if $\s_{y'}=\s_y$.
\end{example}

The next result, concerning the zero set $V(f)=\{x\in\OO \, | \, f(x)=0\}$ of $f$, was proven for power series in~\cite{ghiloni} and extended to all $f\in\mc{S}(\OO)$ in~\cite{gpsdivisionalgebras}.

\begin{theorem}\label{zerosonsphere}
If $f \in \mc{S}(\OO)$, then for every $x \in \OO$ the sets $\s_x \cap V(f)$ and $\s_x \cap V(f^c)$ are both empty, both singletons, or both equal to $\s_x$. Moreover,
 \[
V(N(f))=\bigcup_{x \in V(f)}\s_x=\bigcup_{x \in V(f^c)}\s_x\,.
 \]
Finally, for all $g \in \mc{S}(\OO)$,
\[\bigcup_{x\in V(f\cdot g)}\s_x =\bigcup_{x\in V(f)\cup V(g)}\s_x.\]
\end{theorem}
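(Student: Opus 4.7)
The plan is to exploit the representation formula~\eqref{eq:representation}, which for $x=\alpha+\beta J\in\OO\setminus\rr$ (with $\beta>0$) and any $I\in\s$ reads
\[
f(\alpha+\beta I)=\vs f(x)+\beta I\,f'_s(x),
\]
since $\vs f$ and $f'_s$ are constant on $\s_x$. First I would dispose of the case $x\in\rr$: here $\s_x=\{x\}$, and since the defining relation $F(x)=\overline{F(x)}$ forces $F(x)\in\oo\subset\oo_\cc$, one has $f^c(x)=f(x)^c$ and $N(f)(x)=n(f(x))=|f(x)|^2$, so all three claims are immediate.

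For $x\in\OO\setminus\rr$, I would split on whether $f'_s(x)$ vanishes. If $f'_s(x)=0$, then $f$ is constant on $\s_x$ with value $\vs f(x)$, so $\s_x\cap V(f)$ equals $\s_x$ or $\emptyset$ according as $\vs f(x)=0$ or not. By~\eqref{eq:conjugatespherical}, $(f^c)'_s(x)=0$ and $(f^c)^\circ_s(x)=\vs f(x)^c$, giving the identical dichotomy for $f^c$ in the same case. If $f'_s(x)\neq 0$, then the equation $\beta I\,f'_s(x)=-\vs f(x)$ has at most one solution in $\oo$ by the division algebra property, so $\s_x\cap V(f)$ has at most one element, and the same applies to $f^c$. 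This settles the trichotomy provided the intersections for $f$ and $f^c$ are simultaneously empty.

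The central computation is in the subcase $f'_s(x)\neq 0$. Multiplying $\beta I f'_s(x)=-\vs f(x)$ on the right by $f'_s(x)^c$ and invoking Artin's Theorem on the subalgebra generated by $I$ and $f'_s(x)$ yields $\beta I\,n(f'_s(x))=-\vs f(x)\,f'_s(x)^c$. Via the composition identity $n(\vs f(x)\,f'_s(x)^c)=n(\vs f(x))\,n(f'_s(x))$, the unique candidate $I\in\oo$ lies in $\s$ if and only if $\re(\vs f(x)\,f'_s(x)^c)=0$ and $n(\vs f(x))=\beta^2\,n(f'_s(x))$. By~\eqref{eq:normalspherical} these two conditions are precisely $(N(f))'_s(x)=0$ and $(N(f))^\circ_s(x)=0$, which, since $N(f)$ is slice preserving, are jointly equivalent to $N(f)(x)=0$. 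A straightforward check dispatches the subcase $f'_s(x)=0$, so $x\in V(N(f))\iff\s_x\cap V(f)\neq\emptyset$, proving $V(N(f))=\bigcup_{x\in V(f)}\s_x$. The identity $N(f^c)=N(f)$, coming from $F^cF=FF^c\in\rr_\cc$, gives the same equality for $f^c$ and simultaneously enforces the matching of cases in the trichotomy.

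For the final assertion, the composition identity $n(ab)=n(a)n(b)$ on $\oo$ extends as a polynomial identity to $\oo_\cc$, so $N(FG)=N(F)N(G)$ pointwise and hence $N(f\cdot g)=N(f)\cdot N(g)$; since $N(f)$ and $N(g)$ are slice preserving, the slice product coincides with the pointwise product, and on each slice $\cc_J\cong\cc$ there are no zero divisors, giving $V(N(f\cdot g))=V(N(f))\cup V(N(g))$. Combining with the middle assertion yields the claim. The main obstacle will be the algebra in the subcase $f'_s(x)\neq 0$: translating the geometric condition $I\in\s$ into the vanishing of the two components of the stem function of $N(f)$ requires careful use of alternativity in place of full associativity, together with the composition property of $n$; once those tools are deployed, everything else is bookkeeping.
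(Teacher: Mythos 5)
Your argument is correct, and it is essentially the standard one: the paper itself does not prove Theorem~\ref{zerosonsphere} but quotes it from~\cite{ghiloni} and~\cite{gpsdivisionalgebras}, and those proofs likewise rest on the representation formula~\eqref{eq:representation} restricted to a sphere $\s_x$, the dichotomy $f'_s(x)=0$ versus $f'_s(x)\neq0$, and the translation of ``the unique candidate $I$ lies in $\s$'' into $(N(f))^\circ_s(x)=0=(N(f))'_s(x)$ via~\eqref{eq:normalspherical}. Your use of Artin's theorem to pass from $\beta Ib=-a$ to $\beta I\,n(b)=-ab^c$, and of $n(ab^c)=n(a)n(b)$ to get the two scalar conditions, is exactly right, as is the observation $F F^c=F^cF\in\rr_\cc$ giving $N(f^c)=N(f)$ and hence the matching of the three cases for $f$ and $f^c$. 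The only ingredient beyond the paper's stated toolkit is the multiplicativity $N(f\cdot g)=N(f)\cdot N(g)$; this is precisely~\cite[Theorem 3.1]{gpsalgebra}, which the paper invokes later (Remark~\ref{rmk:wingproducedbyeta}), and your justification of it --- the identity $(xy)(y^cx^c)=(xx^c)(yy^c)$ on $\oo$ extends to $\oo_\cc$ because a real polynomial identity valid on $\rr^{16}$ persists under the $\cc$-bilinear extension of the structure constants --- is a legitimate self-contained proof, after which the absence of zero divisors in each $\cc_J$ (indeed in $\oo$) gives $V(N(f\cdot g))=V(N(f))\cup V(N(g))$ and the final claim.
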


\begin{example}\label{ex:binomial}
If $f(x) := x-y$, whence $f^c(x) =x-y^c$ and $N(f)=\Delta_y(x):=x^2-xt(y)+n(y)$, then 
\[V(f)=\{y\}\,,\quad V(f^c)=\{y^c\}\,,\quad V(N(f))=\s_y\,.\] For all constant functions $g\equiv c$, we have $(f\cdot g)(x)= xc-yc$, whence $V(f\cdot g)$ is $\{y\}$ when $c \neq 0$ and it is $\oo$ when $c=0$.
\end{example}

The same works studied in greater detail $V(f\cdot g)$ for $f,g\in\mc{S}(\OO)$. For slice regular functions, zeros can be factored out as follows (see~\cite[Theorem 22]{perotti}).

\begin{theorem}\label{thm:factorization}
Let $f:\OO\to\oo$ be a slice regular function and let $y\in\OO$. The zero set $V(f)$ includes $y$ if, and only if, there exists $g\in\mc{SR}(\OO)$ such that
\[f(x)=(x-y)\cdot g(x)\,.\]
The zero set $V(f)$ contains $\s_y$ if, and only if, there exists $h\in\mc{SR}(\OO)$ such that
\[f(x)=\Delta_y(x)\cdot h(x)=\Delta_y(x) h(x)\,.\]
As a consequence, $y\in V(f)$ if, and only if, $\Delta_{y}$ divides $N(f)$.
\end{theorem}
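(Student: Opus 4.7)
My plan is to translate everything to the level of the stem function $F \in \mr{Stem}(D, \oo_\cc)$ inducing $f$, and exploit the alternative (rather than fully associative) structure of $\oo_\cc$. The stem function of $x-y$ is $P(z) = z - y$ and a direct computation gives $P \cdot P^c = \Delta_y \in \rr_\cc$. Since $P^c(z) = (2z - t(y)) - P(z)$ with $2z - t(y) \in \rr_\cc$ central, alternativity yields the associator identity
\[
(P(z), P^c(z), F(z)) = (P(z), 2z - t(y), F(z)) - (P(z), P(z), F(z)) = 0,
\]
which will be the main engine of the proof: it lets one compute with $P, P^c, F$ as if they associated.

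Both backward implications are short. If $f = \Delta_y \cdot h$, then since $\Delta_y$ is slice preserving and hence central in the slice $^*$-algebra, the slice product coincides with the pointwise product and $f$ vanishes on $V(\Delta_y) = \s_y$. If $f = (x-y) \cdot g$ and $y = \alpha_0 + \beta_0 J_0$, then at $z_0 = \alpha_0 + \ui\beta_0$ one has $P(z_0) = \beta_0(\ui - J_0)$, which is a zero divisor in $\oo_\cc$; writing $G(z_0) = G_0 + \ui G_1$ and using $\ui^2 = -1$ together with the left alternative law $J_0(J_0 a) = -a$, one checks $\phi_{J_0}(P(z_0) G(z_0)) = 0$, i.e.\ $f(y) = 0$.

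The crux is the forward direction of the first claim. Given $y \in V(f)$ with $y = \alpha_0 + \beta_0 J_0$ and $z_0 = \alpha_0 + \ui\beta_0$, I define
\[
G(z) := \frac{P^c(z) F(z)}{\Delta_y(z)}
\]
on $D \setminus \{z_0, \bar z_0\}$. The associator identity together with centrality of $\Delta_y(z)$ gives $P(z) G(z) = F(z)$ wherever $G$ is defined, so the task reduces to showing that $G$ extends holomorphically across $z_0$ and $\bar z_0$ and satisfies $G(\bar z) = \overline{G(z)}$. The condition $f(y) = 0$ reads $F_0(z_0) = -J_0 F_1(z_0)$ for the octonionic components of $F(z_0) = F_0(z_0) + \ui F_1(z_0)$; plugging this in and using left alternativity once more, one computes directly that \emph{both} $P^c(z_0) F(z_0) = 0$ and $P^c(\bar z_0) F(\bar z_0) = 0$. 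That a single octonionic zero $y$ of $f$ cancels both complex singularities of $\Delta_y$ is the small miracle of the slice formalism. Removable singularity then produces $G$ holomorphic on $D$; the stem property is inherited from those of $P^c, F, \Delta_y$ via multiplicativity of the $\ui$-conjugation.

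For the forward direction of the second claim, the stronger assumption $\s_y \subseteq V(f)$ adds $f(y^c) = 0$, which together with $f(y) = 0$ forces $F(z_0) = F(\bar z_0) = 0$; classical holomorphic division by the real polynomial $\Delta_y(z) = (z - z_0)(z - \bar z_0)$ gives $F = \Delta_y H$ with $H$ holomorphic, and reality of the coefficients of $\Delta_y$ transfers the stem property to $H$. The third claim then drops out from the first two combined with pointwise multiplicativity of the norm on $\oo_\cc$, which yields $N(f \cdot g) = N(f) \cdot N(g)$ and hence $N(f) = \Delta_y \cdot N(g)$; the converse uses Theorem~\ref{zerosonsphere} to translate $\s_y \subseteq V(N(f))$ into a zero of $f$ on $\s_y$. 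The hardest step is the double cancellation in the preceding paragraph: it is what makes the proof work without assuming more than $f(y) = 0$.
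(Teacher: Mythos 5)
The paper itself does not prove Theorem~\ref{thm:factorization}; it quotes it from~\cite[Theorem 22]{perotti}, so your proposal can only be measured against the standard argument in that framework --- and it essentially \emph{is} that argument: divide $P^cF$ by the central polynomial $\Delta_y$ in $\oo_\cc$, use the associator identity $(P,P^c,F)=0$ to get $P\,G=F$, and remove the singularities at $z_0,\bar z_0$. The computations you indicate are correct. Two remarks on the core. First, the ``double cancellation'' is not a miracle but an automatic consequence of the stem symmetry: $\ui$-conjugation is multiplicative, so $P^c(\bar z_0)F(\bar z_0)=\overline{P^c(z_0)F(z_0)}$, and one vanishing implies the other. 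Second, your removability argument as written needs $z_0\neq\bar z_0$; for $y\in\rr$ you should add the one-line remark that $F_1$ vanishes on $D\cap\rr$, hence $f(y)=0$ gives $F(z_0)=0$ and the numerator $(z-y)F$ vanishes to order two against the double root of $\Delta_y$ (or simply set $G=F/(z-y)$ there). Your appeal to $N(f\cdot g)=N(f)\cdot N(g)$ is legitimate but deserves justification or a citation (\cite[Theorem 3.1]{gpsalgebra}; equivalently, $\oo_\cc$ is a composition algebra over $\rr_\cc$, so $x\mapsto xx^c$ is $\rr_\cc$-valued and multiplicative, which is exactly your ``pointwise'' claim).

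The remaining caveats concern the statement more than your proof, but you should make them explicit. The second equivalence is only correct for $y\notin\rr$: for real $y$ one has $\Delta_y(x)=(x-y)^2$, and $f(x)=x-y$ vanishes on $\s_y=\{y\}$ without being divisible by $\Delta_y$; consistently, your holomorphic division step silently uses $z_0\neq\bar z_0$. Likewise, in the last equivalence your converse argument produces only a zero of $f$ \emph{somewhere} on $\s_y$, not at $y$ itself --- and nothing better can be true, since $f(x)=x-y^c$ has $N(f)=\Delta_y$ while $y\notin V(f)$ when $y\notin\rr$. So what you have actually proven is the corrected claim $V(f)\cap\s_y\neq\emptyset\ \Leftrightarrow\ \Delta_y\mid N(f)$ (via Theorem~\ref{zerosonsphere}); state this explicitly instead of leaving the reader to think the literal ``$y\in V(f)$'' follows, because it does not.
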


If $f\not\equiv0$ then, by~\cite[Remark 11.2]{gpssingularities}, there exists a largest $m\in\nn$ such that $\Delta_{y}^m$ divides $f$. The even number $2m$ is called the \emph{spherical multiplicity} of $f$ at $\s_{y}$, if $y\in\OO\setminus\rr$. If, instead, $y\in\OO\cap\rr$, there exists a largest $n\in\nn$ such that $(x-y)^n$ divides $f(x)$; the number $n$ is called the \emph{classical multiplicity} of $f$ at $y$. If $f\equiv0$, all spherical and classical multiplicities of $f$ are set to $+\infty$.

If $N(f)\not\equiv0$ then, for all $y\in\OO$, the largest $m$ such that $\Delta_{y}^m$ divides $N(f)$ is called the \emph{total multiplicity} of $f$ at $\s_{y}$. If $N(f)\equiv0$, we may set to $+\infty$ the total multiplicities of $f$ at all $y\in\OO$.

We conclude this section by recalling two properties of the zeros of octonionic slice regular functions, namely,~\cite[Theorem 3.5]{gpsdivisionalgebras} and~\cite[Proposition 3.7]{gpsdivisionalgebras}. We use the notations $\cc_J^+:=\{\alpha+\beta J\, | \,\alpha,\beta\in\rr,\beta>0\}$ and $\OO_{J}^+:=\OO\cap\cc_J^+$.

\begin{theorem}\label{structureofzeros}
Assume that $\OO$ is a slice domain or a product domain and let $f \in \mc{SR}(\OO)$.
\begin{itemize}
\item If $f\not\equiv0$ then the intersection $V(f)\cap \cc_J^+$ is closed and discrete in $\OO_J$ for all $J\in\s$ with at most one exception $J_0$, for which it holds that $f_{|_{\OO_{J_0}^+}} \equiv 0$.
\item If, moreover, $N(f)\not\equiv0$ then $V(f)$ is a union of isolated points or isolated spheres $\s_x$.
\end{itemize}
\end{theorem}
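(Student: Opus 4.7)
The plan is to reduce both statements to the identity principle for holomorphic functions in one complex variable, via the Splitting Property of $\oo$.

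For the first bullet, I fix $J\in\s$ together with a splitting basis $\{1,J,J_1,JJ_1,J_2,JJ_2,J_3,JJ_3\}$ of $\oo$. Each $y\in\oo$ decomposes uniquely as $y=z_0+z_1J_1+z_2J_2+z_3J_3$ with $z_l\in\cc_J$, so the slice restriction $f|_{\OO_J}$ splits as
\[f(x)=f_0(x)+f_1(x)J_1+f_2(x)J_2+f_3(x)J_3,\quad x\in\OO_J,\]
with $f_l\colon\OO_J\to\cc_J$. Slice regularity yields the slicewise Cauchy--Riemann equation $\partial_\alpha f+J\,\partial_\beta f\equiv 0$ on $\OO_J$; using the vanishing of the associators $(J,c,J_l)$ for $c\in\cc_J$ (a consequence of alternativity) and the direct-sum decomposition $\oo=\bigoplus_{l=0}^{3}\cc_JJ_l$, each $f_l$ is a $\cc_J$-valued holomorphic map on $\OO_J$. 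If $f|_{\OO_J^+}\not\equiv 0$, some $f_l$ does not vanish identically on $\OO_J^+$, hence not on $\OO_J$; since $\OO_J$ is a connected open subset of $\cc_J$ in the slice-domain case, and $\OO_J^+$ is a connected clopen subset of $\OO_J$ in the product-domain case, the classical identity principle forces $V(f_l)$ to be closed and discrete in $\OO_J$, and the same therefore holds for $V(f)\cap\cc_J^+\subseteq V(f_l)\cap\cc_J^+$.

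The uniqueness of the exceptional slice $J_0$ follows from the stem-function identity $f(\alpha+\beta J)=F_1(\alpha,\beta)+JF_2(\alpha,\beta)$, where $f=\I(F)$ with $F=F_1+\ui F_2$: the condition $f|_{\OO_{J_0}^+}\equiv 0$ amounts to $F_1+J_0F_2\equiv 0$ on $D^+:=\{z\in D:\im z>0\}$, so a second $J_1\neq J_0$ with the same property would yield $(J_0-J_1)F_2\equiv 0$ by subtraction, whence $F_2\equiv 0$ and then $F_1\equiv 0$ on $D^+$ (using that $\oo$ is a division algebra and $J_0-J_1\neq 0$). The identity principle on the connected set $D$ (slice-domain case) or the stem symmetry $F(\bar z)=\overline{F(z)}$ combined with $D^-=\overline{D^+}$ (product-domain case) then yields $F\equiv 0$, contradicting $f\not\equiv 0$.

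For the second bullet, I first note that $N(f)\not\equiv 0$ implies $N(f)|_{\OO_J^+}\not\equiv 0$ for every $J\in\s$: writing the slice-preserving stem function of $N(f)$ as $G=g_1+\ui g_2$ with real-valued $g_1,g_2$, the relation $g_1+Jg_2\equiv 0$ on $D^+$ forces $g_1=g_2=0$ by the $\rr$-linear independence of $\{1,J\}$. The first bullet applied to $N(f)$ then gives that $V(N(f))\cap\cc_J^+$ is closed and discrete in $\OO_J$ for every $J\in\s$. By Theorem~\ref{zerosonsphere}, $V(N(f))=\bigcup_{x\in V(f)}\s_x$ is circular. For non-real $y\in V(f)$ and any $J\in\s$, the point $y_J:=\re(y)+|\im(y)|J$ is isolated in $V(N(f))\cap\cc_J^+$; circularizing a small disk around $y_J$ in $\cc_J^+$ produces a tubular neighborhood $U$ of $\s_y$ in $\OO$ satisfying $V(N(f))\cap U=\s_y$, since both are circular sets with the same slice at $\cc_J^+$. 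For real $y\in V(f)$, the closedness of $V(N(f))\cap\cc_J^+$ in $\OO_J$ prevents any sequence of non-real zeros from converging to $y$, while the real-analytic identity principle on the connected set $\OO\cap\rr$ rules out accumulation by real zeros; thus $y$ is isolated in $V(N(f))$. Intersecting with $V(f)\subseteq V(N(f))$ and invoking Theorem~\ref{zerosonsphere} once more, $V(f)\cap U$ equals either $\{y\}$ or $\s_y$, yielding the claim. The most delicate step is the splitting lemma for $f|_{\OO_J}$ into holomorphic $\cc_J$-valued components, which in the nonassociative setting hinges on the alternativity of $\oo$ (vanishing of the associators $(J,c,J_l)$) and on the direct-sum structure attached to the splitting basis; once this is secured, everything else reduces to routine applications of the identity principle in one complex variable.
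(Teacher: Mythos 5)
The paper does not prove Theorem~\ref{structureofzeros}: it is recalled from~\cite[Theorem 3.5]{gpsdivisionalgebras} without an argument, so there is nothing in-paper to measure your proposal against and it must be judged on its own terms. On those terms it is essentially correct and follows the route one would expect: the octonionic splitting lemma (your associator computation $(J,c,J_l)=0$ for $c\in\cc_J$ is precisely the nonassociative point that needs checking) turns $f|_{\OO_J}$ into four $\cc_J$-valued holomorphic functions and the classical identity principle then controls $V(f)\cap\cc_J^+$; uniqueness of the exceptional slice follows from $(J_0-J_1)F_2\equiv0$ on $D^+$, the absence of zero divisors, and connectedness of $D$ or stem symmetry; the second bullet is obtained by applying the first bullet to the slice preserving function $N(f)$, whose restriction to a half-slice cannot vanish identically, and then circularizing small discs, with Theorem~\ref{zerosonsphere} converting the isolated points or spheres of $V(N(f))$ into the stated structure of $V(f)$.

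Two details should be tightened. First, in the product-domain case your intermediate assertion that $V(f_l)$ is closed and discrete in all of $\OO_J$ is unwarranted, since $f_l$ may vanish identically on the other component $\OO_J\setminus\OO_J^+$; what your argument actually gives, and all you use, is that $V(f_l)\cap\OO_J^+$ is closed and discrete in $\OO_J$, which is legitimate because $\OO_J^+$ is clopen in $\OO_J$ there. Second, and more substantively, the disposal of real zeros accumulating at a real point $y$ is not justified as written: $\OO\cap\rr$ need not be connected (take for $D$ an annulus centered at the origin), and the one-variable real-analytic identity principle only yields that $N(f)$ vanishes on a real segment, which is not by itself a contradiction with $N(f)\not\equiv0$. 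The patch is immediate with tools you already deploy: real points occur only when $\OO$ is a slice domain, where $\OO_J$ is connected; since $N(f)$ is slice preserving, $N(f)|_{\OO_J}$ is a $\cc_J$-valued holomorphic function which is not identically zero (otherwise the representation formula would force $N(f)\equiv0$), so its zero set --- which contains the real zeros of $N(f)$ as well as the $\cc_J^+$-representatives of the non-real ones --- cannot accumulate at $y\in\OO_J$. With that replacement, and noting that your treatment of non-real zeros near $y$ via closedness of $V(N(f))\cap\cc_J^+$ is already correct, the proof is complete.
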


\begin{proposition}\label{SRnonsingular}
Let $f\in\mc{SR}(\OO)$. The equality $N(f)\equiv0$ implies $f\equiv0$ if, and only, $\OO$ is a union of slice domains.
\end{proposition}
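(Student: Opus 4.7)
The statement is an equivalence, and the plan is to handle the two implications separately: for $(\Leftarrow)$ via an identity-principle argument on the stem function, for $(\Rightarrow)$ via an explicit construction on a product-domain component.

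For $(\Leftarrow)$: Assuming $\OO$ is a disjoint union of slice domains and $N(f)\equiv 0$, I would fix a component $\OO_0=\OO_{D_0}$, which by hypothesis meets $\rr$. At any real point $\alpha\in\OO_0\cap\rr$, the stem function $F$ of $f$ satisfies $F(\alpha)=\overline{F(\alpha)}\in\oo$, so $f^c(\alpha)=f(\alpha)^c$, giving
\[
0=N(f)(\alpha)=f(\alpha)f(\alpha)^c=n(f(\alpha))=|f(\alpha)|^2
\]
and hence $f(\alpha)=0$. This forces $F$ to vanish on the non-empty open subset $D_0\cap\rr$ of $\rr$. The next step is to view $F$ as a holomorphic map from the connected open set $D_0\subset\cc$ into the $8$-dimensional $\rr_\cc$-vector space $\oo_\cc$; fixing an $\rr_\cc$-basis and expanding $F$ in coordinates, each coordinate is holomorphic $D_0\to\rr_\cc$ and vanishes on $D_0\cap\rr$, so the classical one-variable identity principle forces $F\equiv 0$ on $D_0$, whence $f\equiv 0$ on $\OO_0$. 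Running this over all components gives $f\equiv 0$ on $\OO$.

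For $(\Rightarrow)$: I plan to argue by contrapositive. If $\OO$ is not a union of slice domains, it contains at least one product-domain component $\OO_0=\OO_{D_0}$ with $D_0=D_0^+\sqcup D_0^-$ exchanged by complex conjugation and disjoint from $\rr$. After fixing $\ell\in\s$, set
\[
F(z):=1+\ui\ell\ \text{on}\ D_0^+,\qquad F(z):=1-\ui\ell\ \text{on}\ D_0^-.
\]
This is a locally constant, hence holomorphic, stem function. Using the centrality of $\ui$ in $\oo_\cc$ and Artin's theorem on the associative subalgebra generated by $\ui$ and $\ell$, the product collapses to $FF^c=1-(\ui\ell)^2=1-\ui^2\ell^2=0$. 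By Proposition \ref{prop:algebra}, the induced function $f_0=\I(F)\in\mc{SR}(\OO_0)$ satisfies $N(f_0)=\I(FF^c)\equiv 0$, while $f_0$ takes the values $1\pm J\ell$ at $x=\alpha+\beta J$, which are nonzero whenever $J\neq\pm\ell$, so $f_0\not\equiv 0$. Extending $f_0$ by zero on the remaining components of $\OO$ produces the desired $f\in\mc{SR}(\OO)$ with $N(f)\equiv 0$ but $f\not\equiv 0$.

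The only point requiring genuine care is the identity-principle step in $(\Leftarrow)$, which I expect to be routine once $F$ is treated as a holomorphic map into a finite-dimensional complex vector space and analyzed componentwise; the converse is essentially witnessed by the single computation $(1+\ui\ell)(1-\ui\ell)=0$ in $\oo_\cc$, which works precisely because $\ui$ is central and $\ui,\ell$ generate an associative subalgebra.
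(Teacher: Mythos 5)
Your proof is correct, but note that the paper does not actually prove Proposition~\ref{SRnonsingular}: it is recalled from \cite[Proposition 3.7]{gpsdivisionalgebras}, so there is no internal proof to compare against. Your argument is nonetheless a legitimate self-contained one, and it aligns with the material the paper does provide: your product-domain witness $\I(1\pm\ui\ell)$ is, up to the factor $2$ and the choice of the unit, exactly the function $\eta_{J_0}$ of Example~\ref{ex:1fin}, whose properties ($N(\eta_{J_0})\equiv0$, $V(\eta_{J_0})=\cc_{J_0}^+$, hence $\eta_{J_0}\not\equiv0$) the paper states explicitly; extending by zero to the other components and invoking Proposition~\ref{prop:algebra} is fine. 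For the slice-domain direction, your chain is sound: at a real point the stem condition forces $F(\alpha)\in\oo$, so $N(f)(\alpha)=f(\alpha)f(\alpha)^c=|f(\alpha)|^2$ and $f$ vanishes on the nonempty open set $\OO_0\cap\rr$; then the coordinates of the holomorphic stem function with respect to an $\rr_\cc$-basis are ordinary holomorphic functions on the connected set $D_0$ vanishing on a set with accumulation points, so $F\equiv0$ by the classical identity principle. The only step you leave implicit is that the slice product $f\cdot f^c$ evaluates as the pointwise product at real points; this follows immediately from the same observation $F(\alpha)\in\oo$ (the $\ui$-component of the stem vanishes there), so it is a harmless omission rather than a gap.
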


The work~\cite{perotti} provided the following example of slice regular function $f$ over a product domain with $f\not\equiv0$ but $N(f)\equiv0$.

\begin{example}\label{ex:1fin}
Fix $J_0\in\s$. We define $\eta_{J_0}:\oo\setminus\rr\to\oo$ by the formula
\[\eta_{J_0}(x):=\frac12+\frac{\im(x)}{|\im(x)|}\frac{J_0}{2}\,.\]
Then $\eta_{J_0}$ is slice regular in $\oo\setminus\rr$ and its zero set $V(\eta_{J_0})$ is $\cc_{J_0}^+$. By direct computation, $\eta_{J_0}^c=\eta_{-J_0}$ and $N(\eta_{J_0})=\eta_{J_0}\cdot\eta_{-J_0}\equiv0$.
\end{example}

More properties of octonionic slice functions and slice regular functions can be found in~\cite{wang,gpsdivisionalgebras}.

%%%

\subsection{Orthogonal almost-complex structures over the octonions}\label{sec:constantoacs}

In this work, we will thoroughly use constant OCSs on $\oo=\rr^8$, according to the next definition. We denote the standard scalar product of $\rr^{2n}$ by the symbol $\langle\,,\rangle$. As usual, $M^\B_{\B'}(F)$ denotes the matrix associated to any linear map $F$ with respect to a real vector basis $\B$ of its domain and a basis $\B'$ of its codomain.

\begin{definition}
A \emph{constant complex structure} on $\rr^{2n}$ is an $\rr$-linear endomorphism $\mc{J}_0:\rr^{2n}\to\rr^{2n}$ such that $(\mc{J}_0\circ\mc{J}_0)(u)=-u$ for all $u\in\rr^{2n}$. If, moreover, $\langle\mc{J}_0(u),\mc{J}_0(v)\rangle=\langle u,v\rangle$ for all $u,v\in\rr^{2n}$, then $\mc{J}_0$ is termed \emph{orthogonal}.
\end{definition}

\begin{definition}
An \emph{orthogonal almost-complex structure (OACS)} on a Riemannian manifold $(M,g)$ is an almost-complex structure $\mc{J}$ on $M$ such that $g_{x_0}(\mc{J}_{x_0}(u),\mc{J}_{x_0}(v))=g_{x_0}(u,v)$ for all $x_0\in M$ and all $u,v\in T_{x_0}M$. If $\mc{J}$ is also integrable, then it is called an \emph{orthogonal complex structure (OCS)}.
\end{definition}

The interested reader may find a friendly introduction to OCSs in~\cite{salamonumi}. The same article uses the notation $Z_n$ for the Hermitian symmetric space $\frac{SO(2n)}{U(n)}$ of all constant orthogonal complex structures on $\rr^{2n}$ that induce the standard orientation. The lower-dimensional cases are described as follows: $Z_1$ is a singleton because $SO(2)=U(1)$; there exist isomorphisms
\begin{align*}
&\varphi_2:\cc\P^1\to Z_2\,,\\
&\varphi_3:\cc\P^3\to Z_3\,,\\
&\varphi_4:\mathcal{Q}^6\phantom{I}\to Z_4\,,
\end{align*}
where $\mathcal{Q}^6$ is a quadric in $\cc\P^7$. The chart $\cc\to Z_2$, $\xi\mapsto\varphi_2[1:\xi]$ is portrayed in Table 1 and the chart $\cc^3\to Z_3$, $(\xi_1,\xi_2,\xi_3)\mapsto\varphi_3[1:\xi_1:\xi_2:\xi_3]$ in Table 2, in accordance with~\cite[Proposition 3.4]{borisov}. As explained in~\cite[page 129]{borisov}, the orientation induced on $\rr^{2n}$ by a constant OCS $\mc{J}_0$ is determined by the Pfaffian, $\mr{Pf}(M)$, of the matrix $M=M^\E_\E(\mc{J}_0)$ associated to $\mc{J}_0$ with respect to the standard basis $\E=\E_{2n}$. The determinant is, instead, $\det(M)=1$ irrespective of orientation. For the definition and basic properties of the Pfaffian, see~\cite[\S8.5]{librocohn1}. According to the convention adopted in the latter reference, $\mc{J}_0$ induces a positive orientation on $\rr^{2n}$ if, and only if, $\mr{Pf}(M)=-1$, while a negative orientation corresponds to $\mr{Pf}(M)=1$. By Formula (7) in~\cite[\S8.5]{librocohn1}, an orthogonal basis $\B$ of $\rr^{2n}$ is positively oriented if, and only if, the Pfaffian of $M^\B_\B(\mc{J}_0)$ equals the Pfaffian of $M^\E_\E(\mc{J}_0)$.

\begin{sidewaystable}
\centering \small
\[\left(\begin{array}{cccc}
0&-1+|\xi|^2&-2\beta&2\alpha\\
1-|\xi|^2&0&2\alpha&2\beta\\
2\beta&-2\alpha&0&-1+|\xi|^2\\
-2\alpha&-2\beta&1-|\xi|^2&0
\end{array}
\right)\]
\caption{The matrix associated to $\varphi_2[1:\xi]$ with respect to the standard basis of $\rr^4$. Here, $\xi=\alpha+i\beta$.}

\[\left(\begin{array}{cccccc}
0&-1+|\xi_1|^2+|\xi_2|^2-|\xi_3|^2&-2(\beta_1-\alpha_3 \beta_2+\alpha_2 \beta_3)&2 (\alpha_1+\alpha_2 \alpha_3+\beta_2 \beta_3)&-2 (\alpha_3 \beta_1+\beta_2-\alpha_1 \beta_3)&-2 (-\alpha_2+\alpha_1 \alpha_3+\beta_1 \beta_3)\\
1-|\xi_1|^2-|\xi_2|^2+|\xi_3|^2&0&2 (\alpha_1-\alpha_2 \alpha_3-\beta_2 \beta_3)&2 (\beta_1+\alpha_3 \beta_2-\alpha_2 \beta_3)&2 (\alpha_2+\alpha_1 \alpha_3+\beta_1 \beta_3)&2 (-\alpha_3 \beta_1+\beta_2+\alpha_1 \beta_3)\\
2 (\beta_1-\alpha_3 \beta_2+\alpha_2 \beta_3)&2 (-\alpha_1+\alpha_2 \alpha_3+\beta_2 \beta_3)&0&-1+|\xi_1|^2-|\xi_2|^2+|\xi_3|^2&-2 (-\alpha_2 \beta_1+\alpha_1 \beta_2+\beta_3)&2 (\alpha_1 \alpha_2+\alpha_3+\beta_1 \beta_2)\\
-2 (\alpha_1+\alpha_2 \alpha_3+\beta_2 \beta_3)&-2 (\beta_1+\alpha_3 \beta_2-\alpha_2 \beta_3)&1-|\xi_1|^2+|\xi_2|^2-|\xi_3|^2&0&-2 (\alpha_1 \alpha_2-\alpha_3+\beta_1 \beta_2)&2 (\alpha_2 \beta_1-\alpha_1 \beta_2+\beta_3)\\
2 (\alpha_3 \beta_1+\beta_2-\alpha_1 \beta_3)&-2 (\alpha_2+\alpha_1 \alpha_3+\beta_1 \beta_3)&2 (-\alpha_2 \beta_1+\alpha_1 \beta_2+\beta_3)&2 (\alpha_1 \alpha_2-\alpha_3+\beta_1\beta_2)&0&-1-|\xi_1|^2+|\xi_2|^2+|\xi_3|^2\\
2 (-\alpha_2+\alpha_1 \alpha_3+\beta_1 \beta_3)&-2 (-\alpha_3 \beta_1+\beta_2+\alpha_1 \beta_3)&-2 (\alpha_1 \alpha_2+\alpha_3+\beta_1 \beta_2)&-2 (\alpha_2 \beta_1-\alpha_1\beta_2+\beta_3)&1+|\xi_1|^2-|\xi_2|^2-|\xi_3|^2&0
\end{array}
\right)\]
\caption{The matrix associated to $\varphi_3[1:\xi_1:\xi_2:\xi_3]$ with respect to the standard basis of $\rr^6$. Here, $\xi_t=\alpha_t+i\beta_t$.}
\end{sidewaystable}

Using the division algebras $\cc,\hh,\oo$, the spaces $Z_1,Z_2,Z_4$ can also be described as follows.

\begin{itemize}
\item In $\cc=\rr^2$ the standard orientation is induced by multiplication by $i$. With respect to the standard basis $\E_2=\{1,i\}$, the associated matrix is
\[H:=\left(\begin{array}{rr}
0 & -1\\
1 & 0
\end{array}
\right)\,.\]
It holds that $\mr{Pf}(H)=-1$ according to~\cite[\S8.5]{librocohn1}. There are no other constant OCSs inducing the same orientation. The matrix associated to multiplication by $-i$ with respect to the standard basis $\{1,i\}$ is $-H$, which has $\mr{Pf}(-H)=1=-\mr{Pf}(H)$.
\item In $\hh=\rr^4$, the standard orientation is induced by left multiplication by $i$, so that the standard basis $\E_4=\{1,i,j,k\}$ (with $k=ij$) is positively oriented. Each constant OCS $\mc{J}_0$ on $\hh$ can be identified with a point $I$ of the $2$-sphere $\s_\hh$ of unit elements of $\im(\hh)$, or \emph{quaternionic imaginary units}, as follows. Since $\mc{J}_0$ is orthogonal, it holds that $\mc{J}_0(1)=I\in\s_\hh$. On the vector space orthogonal to $1$ and $I$ in $\hh\simeq\rr^4$, say the span of $J,IJ$ for some $J\in\s_\hh$ orthogonal to $I$, there is only one constant OCS compatible with the given orientation. Thus, $\mc{J}_0$ is left multiplication by a fixed $I\in\s_\hh$ and the matrix associated to $\mc{J}_0$ with respect to the basis $\{1,I,J,IJ\}$ is
\[\left(\begin{array}{cc}
H& 0 \\
0 & H
\end{array}
\right)\,,\]
whose Pfaffian is $-1$ according to~\cite[\S8.5]{librocohn1}.
\item In $\oo=\rr^8$, the standard orientation is induced by left multiplication by $i$. It can be checked by direct computation that the standard basis $\E_8=\{1,i,j,k,\ell,\ell i, \ell j, \ell k\}$ is positively oriented. Now consider a constant OCS $\mc{J}_0$ on $\oo$. As before, $\mc{J}_0$ maps $1$ to an octonionic imaginary unit $J\in\s$. If $\B=\{1,J,v_1,\ldots,v_6\}$ is any positively oriented orthogonal basis starting with $1$ and $J$, then the matrix $M_\B^\B(\mc{J}_0)$ associated to $\mc{J}_0$ with respect to $\B$ has the form
\[\left(\begin{array}{cc}
H& 0 \\
0 & S
\end{array}
\right)\,,\]
where $S$ is the $6\times6$ real matrix associated to an arbitrary constant OCS (compatible with the chosen orientation) on the $6$-dimensional vector space $\cc_J^\perp$, with respect to the basis $\{v_1,\ldots,v_6\}$.
\end{itemize}

It is clear from the previous list that, for $n=2,4$, the space $Z_n$ is a fiber bundle over $S^{2n-2}$ with fiber $Z_{n-1}$. This statement is also true for $n=3$: if we fix a vector $v_0\in\rr^6$, choosing a constant OCS $\mc{J}_0$ on $\rr^6$ means choosing a value $\mc{J}_0(v_0)=w_0$ in the unit $4$-sphere of $\{v_0\}^\perp$ and then choosing a constant OCS on the $4$-dimensional vector space $\{v_0,w_0\}^\perp$, compatible with the chosen orientation. For instance: by taking a second look at Table 2, it is not hard to spot the matrix associated to $\varphi_2[1:\xi]$ in the upper left $4\times4$ minor of the matrix associated to $\varphi_3[1:\xi:0:0]$.

In the previous list, when we considered $\hh$, we only mentioned left multiplication by a constant $I\in\s_\hh$. Right multiplication by such an $I$ induces a negative orientation. Indeed, with respect to a positively oriented orthogonal basis $\{1,I,J,IJ\}$, the associated matrix is
$\left(\begin{array}{cc}
H& 0 \\
0 & -H
\end{array}
\right)$,
whose Pfaffian is $1$.

We now focus on the case of octonions $\oo$, treated in~\cite{bryant}. The next result subsumes some results of~\cite[\S4.6, page 205]{libroward} on octonionic rotations and a statement from~\cite[page 190]{bryant}: namely, that left multiplications by octonionic imaginary units form a subclass of $Z_4$, while right multiplications do not. Both for the sake of completeness and because our choice of orientation is different from the choice of~\cite{bryant}, we include a proof of this result.

\begin{theorem}\label{thm:LandR}
For each $p\in\oo$, let us set $L_p(x):=px$ and $R_p(x):=xp$ for all $x\in\oo$.
\begin{enumerate}
\item $L_p,R_p$ are transformations of $\oo$ if, and only if, $p\neq0$. In this case, they are orientation-preserving conformal transformations with scaling factor $|p|$ and $L_{p}^{-1}=L_{p^{-1}}, R_{p}^{-1}=R_{p^{-1}}$.
\item $L_p,R_p$ are (special) orthogonal transformations of $\oo$ if, and only if, $|p|=1$.
\item $L_p,R_p$ are (constant) orthogonal complex structures on $\oo$ if, and only if, $p\in\s$.
\item For every imaginary unit $J\in\s$, $L_J$ induces a positive orientation of $\oo$, while $R_J$ induces a negative orientation.
\end{enumerate}
\end{theorem}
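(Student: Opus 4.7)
The plan is to handle the four items in order, each building on the previous. For (1), linearity of $L_p$ and $R_p$ is obvious and multiplicativity of the octonionic norm gives $|L_p(x)|=|R_p(x)|=|p|\,|x|$, so for $p\neq 0$ both are linear injections (hence bijections) of the finite-dimensional space $\oo$, scaling lengths by $|p|$, i.e.\ conformal with factor $|p|$. The explicit inverses $L_{p^{-1}},R_{p^{-1}}$ come from the identity $(p^{-1},p,y)=0$ recalled in Section~\ref{sec:preliminaries}: for example $L_p\circ L_{p^{-1}}(y)=p(p^{-1}y)=(pp^{-1})y=y$. Orientation-preservation is a connectedness argument: $\oo\setminus\{0\}$ is path-connected, $p\mapsto\det(L_p)$ is continuous and nowhere vanishing on it, and $\det(L_1)=1>0$. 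Item (2) is then immediate because $L_p$ and $R_p$ preserve the norm iff $|p|=1$, with the ``special'' qualifier supplied by (1). For (3), the alternativity identity $(p,p,x)=0$ gives $L_p\circ L_p(x)=p(px)=p^2x=L_{p^2}(x)$, so $L_p^2=-\mr{id}=L_{-1}$ iff $p^2=-1$, i.e.\ $p\in\s$; then $|p|=1$ is automatic, so orthogonality follows from (2). The arguments for $R_p$ are symmetric, using $(x,p,p)=0$.

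Item (4) requires more care. My strategy is to verify both claims for a single representative $J\in\s$ and then extend by connectedness, using that the space of constant OCSs on $\rr^8$ splits into two clopen components according to induced orientation (detected by the Pfaffian sign as recalled in Section~\ref{sec:constantoacs}), and that the maps $\s\ni J\mapsto L_J,R_J$ are continuous into this space. For the representative, I would fix any $J\in\s$ and pick a distinguished splitting basis $\B=\{1,J,J_1,JJ_1,J_2,JJ_2,J_3,JJ_3\}$. Such $\B$ is positively oriented, because the $^*$-algebra automorphism of $\oo$ provided by the ``distinguished'' assumption carries $\B$ to an even permutation of the standard basis $\E_8$, and every such automorphism, lying in the connected Lie group $G_2=\mr{Aut}(\oo)$, is orientation-preserving. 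By alternativity, $L_J(JJ_t)=J^2J_t=-J_t$ for $t=1,2,3$, together with $L_J(1)=J$, $L_J(J)=-1$ and $L_J(J_t)=JJ_t$, so $M^\B_\B(L_J)$ is block-diagonal with four copies of $H=\bigl(\begin{smallmatrix}0 & -1\\ 1 & 0\end{smallmatrix}\bigr)$, and a direct Pfaffian (or $\omega^4/4!$) computation identifies its induced orientation as the standard one.

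For $R_J$ the short cut is the identity $c\circ R_J=-L_J\circ c$, which follows from $(xJ)^c=J^c x^c=-Jx^c$; it yields $R_J=c\circ(-L_J)\circ c^{-1}$, so $R_J$ is similar to $-L_J$ via octonionic conjugation $c$. Now $c$ is orientation-reversing on $\oo$, because it fixes $\rr$ and negates the seven-dimensional imaginary part ($\det c=(-1)^7=-1$), while $L_J$ and $-L_J$ induce the same orientation in dimension $8$ (their associated $2$-forms differ only by a sign, and $(-\omega)^4=\omega^4$). Therefore conjugation by $c$ flips the induced orientation, so $R_J$ induces the orientation opposite to $L_J$, i.e.\ the negative one. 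The main obstacle I expect is exactly this orientation bookkeeping in (4): showing rigorously that the distinguished splitting basis is positively oriented and matching up the Pfaffian sign convention of Section~\ref{sec:constantoacs} with the explicit computation. Everything else is a routine translation of the alternativity and Moufang identities already recorded in Section~\ref{sec:preliminaries}.
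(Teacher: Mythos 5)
Your proposal is correct, and on items 1--3 it essentially matches the paper (the only cosmetic difference: you get $\det(L_p),\det(R_p)>0$ by continuity on the connected set $\oo\setminus\{0\}$ with $\det(L_1)=1$, whereas the paper just computes $\det(L_p)=|p|^8=\det(R_p)$). On item 4, however, you take a genuinely different route. The paper also uses constancy of $J\mapsto\mr{Pf}\big(M^\E_\E(L_J)\big)$ and $J\mapsto\mr{Pf}\big(M^\E_\E(R_J)\big)$ on the connected sphere $\s$, but then settles everything by one explicit computation at $J=i$ in the standard basis, finding $M^\E_\E(L_i)=\mathrm{diag}(H,H,-H,-H)$ and $M^\E_\E(R_i)=\mathrm{diag}(H,-H,H,H)$ with Pfaffians $-1$ and $1$. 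You instead work at an arbitrary $J$ in a distinguished splitting basis $\B$, where $M^\B_\B(L_J)=\mathrm{diag}(H,H,H,H)$, and you dispatch $R_J$ via the identity $c\circ R_J=-L_J\circ c$, $\det c=-1$, and the congruence behaviour of the Pfaffian; this is elegant and avoids a second matrix computation. What it costs you is the prior knowledge that $\B$ is positively oriented: in the paper this is a Remark deduced \emph{after}, and from, the theorem, so you rightly give an independent argument --- the defining $^*$-automorphism carries $\B$ to an even permutation of $\E_8$ (indeed the product of the $3$-cycles $(2\,5\,3)$ and $(4\,6\,7)$), and automorphisms of $\oo$ preserve orientation because $\mathrm{Aut}(\oo)=G_2$ is connected. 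That last fact is classical but external to the paper's toolkit, so your proof is a bit less self-contained than the paper's elementary computation at $J=i$. Two small polishing points: when you say the Pfaffian of $\mathrm{diag}(H,H,H,H)$ ``identifies the orientation as standard'', route this through the paper's convention (Pf computed in a positively oriented orthogonal basis equals Pf in $\E_8$, and positive orientation corresponds to Pf $=-1$); or, more simply, observe that $\B$ is itself an adapted basis $(1,J,J_1,JJ_1,J_2,JJ_2,J_3,JJ_3)$ for $L_J$, so once its positivity is known the orientation statement for $L_J$ needs no Pfaffian at all.
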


\begin{proof}
By direct computation,
\begin{align*}
&\re(L_p(x)^cL_p(y))=\re((x^cp^c)(py))=\re((x^c,p^c,py))+\re(x^c(n(p)y))=|p|^2\re(x^cy)\,,\\
&\re(R_p(x)R_p(y)^c)=\re((xp)(p^cy^c))=\re((x,p,p^cy^c))+\re(x(n(p)y^c))=|p|^2\re(xy^c)\,.
\end{align*}
Thus, $\langle L_p(x),L_p(y)\rangle=|p|^2\langle x,y\rangle$ and $\langle R_p(x),R_p(y)\rangle=|p|^2\langle x,y\rangle$. By direct computation, $\det(L_p)=|p|^8=\det(R_p)$. The first two statements immediately follow.

The third statement follows from the fact that $L_p\circ L_p=L_{p^2}$ and $R_p\circ R_p=R_{p^2}$ equal $-id_\oo$ if, and only if, $p^2=-1$.

We can prove the fourth statement as follows. Consider the standard basis $\E=\E_8=\{1,i,j,k,\ell,\ell i, \ell j, \ell k\}$. The map $\s\to\{\pm1\}\,,\ J\mapsto\mr{Pf}\big(M^\E_\E(L_J)\big)$ is continuous, whence constant. So is the map $\s\to\{\pm1\}\,,\ J\mapsto\mr{Pf}\big(M^\E_\E(R_J)\big)$. Moreover, for $J=i$, we can explicitly compute:
\[M_\E^\E(L_i)=\left(\begin{array}{rrrr}
H&0&0&0\\
0&H&0&0\\
0&0&-H&0\\
0&0&0&-H
\end{array}
\right)\,,\quad M_\E^\E(R_i)=\left(\begin{array}{rrrr}
H&0&0&0\\
0&-H&0&0\\
0&0&H&0\\
0&0&0&H
\end{array}
\right)\,.\]
The former matrix has Pfaffian $-1$, while the latter has Pfaffian $1$.
\end{proof}

As a byproduct, we can make the following remark.
\begin{remark}
Every distinguished splitting basis $\B=\{1,J,J_1,JJ_1,J_2,JJ_2,J_3,JJ_3\}$ is a positively oriented orthogonal basis. Indeed, by direct computation, $M^\B_\B(L_J)$ is
\[\left(\begin{array}{rrrr}
H&0&0&0\\
0&H&0&0\\
0&0&H&0\\
0&0&0&H
\end{array}
\right)\,,\]
and $\mr{Pf}\big(M^\B_\B(L_J)\big)=-1=\mr{Pf}\big(M^\E_\E(L_J)\big)$.
\end{remark}

Theorem~\ref{thm:LandR} showed that $\mathcal{L}:=\{L_J\}_{J\in\s}$ is a $6$-dimensional subset of the $12$-dimensional space $Z_4$. Further examples of elements of $Z_4$ can be found by conjugation with the conformal transformations $L_p$ or $R_p$ (in contrast with the quaternionic case, when such conjugations produce other left multiplications by imaginary units). This is done in the next two propositions. The two propositions and the subsequent lemma are probably well-known to experts, but we have not been able to find these specific results explicitly stated in the literature.

\begin{proposition}\label{prop:compositionL}
For all $J\in\s$ and all $p\in\oo\setminus\{0\}$, the composed transformation
\[(L_p\circ L_J\circ L_{p^{-1}})(x)=p(J(p^{-1}x))\]
belongs to $Z_4$. If $p\in\cc_J$, then $L_p\circ L_J\circ L_{p^{-1}}=L_J$. If $p\in\im(\oo)$, then $L_p\circ L_J\circ L_{p^{-1}}=L_{J'}$ with $J'=pJp^{-1}\in\s$. For all $p\in\oo\setminus(\cc_J\cup\im(\oo))$, it holds that $L_p\circ L_J\circ L_{p^{-1}}\neq L_{J'}$ for all $J'\in\s$.
\end{proposition}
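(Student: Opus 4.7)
My plan is to verify each of the four assertions in turn. First, membership in $Z_4$: by Theorem~\ref{thm:LandR}(1), $L_p$ and $L_{p^{-1}}=L_p^{-1}$ are conformal with scaling factors $|p|$ and $|p|^{-1}$, so the composition with $L_J$ is an isometry, and $(L_p\circ L_J\circ L_{p^{-1}})^2=L_p\circ L_J^2\circ L_{p^{-1}}=-id_\oo$, giving a constant OCS. To pin down the orientation, I would use the continuity of $p\mapsto \mr{Pf}\big(M^\E_\E(L_p\circ L_J\circ L_{p^{-1}})\big)$ on the connected set $\oo\setminus\{0\}$: this is a $\{\pm 1\}$-valued continuous function, equal to $-1$ at $p=1$ by Theorem~\ref{thm:LandR}(4), hence identically $-1$.

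For $p\in\cc_J$ I would apply Artin's theorem to the subalgebra generated by $\{J,x\}$: it contains $\cc_J$ (hence $p$ and $p^{-1}$) and is associative, so $p(J(p^{-1}x))=(pJp^{-1})x$, and commutativity of $\cc_J$ gives $pJp^{-1}=J$. For $p\in\im(\oo)\setminus\{0\}$, the identity $p^{-1}=-p/|p|^2$ (from $p^c=-p$ and $p^2\in\rr$) reduces $|p|^2\,p(J(p^{-1}x))$ and $|p|^2\,(pJp^{-1})x$ to $-p(J(px))$ and $-(pJp)x$ respectively, which coincide by Moufang~(\ref{moufang1}). The membership $J':=pJp^{-1}\in\s$ follows by computing $(J')^2=-1$ and $(J')^c=-J'$ inside the associative subalgebra generated by $\{p,J\}$.

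The decisive case is $p\notin\cc_J\cup\im(\oo)$. I would decompose $p=\alpha+v$ with $\alpha=\re(p)\neq 0$ and $v=\im(p)\notin\rr J$ (so $v\neq 0$). Expanding both $p(J(p^{-1}x))$ and $(pJp^{-1})x$ using $p^{-1}=(\alpha-v)/|p|^2$, then cancelling the quadratic-in-$v$ pair via the Moufang identity~(\ref{moufang1}) in the form $v(J(vx))=(vJv)x$ and merging the two linear-in-$v$ pairs via the alternating property of the associator, I expect the eight-term difference to collapse to
\[
p(J(p^{-1}x))-(pJp^{-1})x=\frac{2\alpha}{|p|^2}(J,v,x).
\]
If the composition equals $L_{J'}$ for some $J'\in\s$, evaluating at $x=1$ forces $J'=pJp^{-1}$ (since $(J,v,1)=0$), and the identity then forces $(J,v,\cdot)\equiv 0$ on $\oo$. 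However, $J$ and $v$ are linearly independent pure imaginary octonions, so by Artin they generate a quaternion subalgebra $\mbb{H}'=\rr\langle 1,J,v,Jv\rangle\subsetneq\oo$; the Cayley--Dickson doubling $\oo=\mbb{H}'+\ell'\mbb{H}'$ then produces $x\in\oo\setminus\mbb{H}'$ with $(J,v,x)\neq 0$, contradicting the assumption.

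The main obstacle I foresee is precisely this collapse identity in the general case: the eight terms from the two expansions involve different associative groupings of $p,v,J,x$, and only a careful combined use of Moufang (to eliminate $(vJv)x-v(J(vx))$) and of the alternating property of the associator (to merge $(Jv)x-J(vx)$ with $v(Jx)-(vJ)x$) reduces the defect to the clean expression $2\alpha(J,v,x)/|p|^2$. Once this identity is in hand, the non-associativity of $\oo$ completes the argument.
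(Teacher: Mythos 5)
Your proposal is correct, but it reaches the conclusion by a route genuinely different from the paper's, so let me compare. First, the collapse identity you flag as the main obstacle does hold exactly as you sketch it: writing $p=\alpha+v$, the eight terms (after multiplying by $|p|^2$) differ by $\alpha\left[(Jv)x-J(vx)+v(Jx)-(vJ)x\right]-\left[v(J(vx))-(vJv)x\right]$; the second bracket vanishes by Moufang~\eqref{moufang1} with $x=v$, and the first equals $\alpha\left[(J,v,x)-(v,J,x)\right]=2\alpha(J,v,x)$ by the alternating property of the associator, giving $p(J(p^{-1}x))-(pJp^{-1})x=\tfrac{2\alpha}{|p|^2}(J,v,x)$. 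This single pointwise formula settles the imaginary case ($\alpha=0$) and the exclusion case ($\alpha\neq0$, associator nonvanishing) uniformly. The paper argues differently: it decomposes $\oo=V\oplus V^\perp$, where $V$ is the associative subalgebra generated by $J$ and $p$, shows the composition acts as $L_{pJp^{-1}}$ on $V$ and as $L_{p^{-1}Jp}$ on $V^\perp$ (the latter by reducing WLOG to $V=\hh$, $w=\ell q$), and then uses the equivalence $pJp^{-1}=p^{-1}Jp\Leftrightarrow p^2\in\cc_J\Leftrightarrow p\in\im(\oo)$ for $p\notin\cc_J$. The two arguments encode the same information---your defect formula restricted to $V^\perp$ is precisely $\tfrac{2\alpha}{|p|^2}(Jv-vJ)$ acting on $V^\perp$, i.e.\ the difference between $L_{p^{-1}Jp}$ and $L_{pJp^{-1}}$---but yours handles the second and third assertions in one stroke and quantifies the failure, whereas the paper's explicit description of the action on $V^\perp$ is reused in the proof of Proposition~\ref{prop:compositionR}. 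Two minor remarks: your final step still requires the same one-line doubling computation the paper performs, namely $(J,v,\ell'q)=\ell'\bigl((vJ-Jv)q\bigr)\neq0$ for $q\neq0$, to see that $(J,v,\cdot)\not\equiv0$ when $J$ and $v$ are linearly independent imaginary elements; and your orientation argument (continuity of the $\{\pm1\}$-valued Pfaffian on the connected set $\oo\setminus\{0\}$, anchored at $p=1$) replaces the paper's observation that conjugating by the orientation-preserving conformal map $L_p$ preserves the induced orientation---both are valid.
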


\begin{proof}
Let $p\in\oo\setminus\{0\}$. To prove that $L_p\circ L_J\circ L_{p^{-1}}$ is a constant complex structure, it suffices to observe that
\[(L_p\circ L_J\circ L_{p^{-1}})\circ(L_p\circ L_J\circ L_{p^{-1}})=L_p\circ L_J\circ L_J\circ L_{p^{-1}}=-L_p\circ L_{p^{-1}}=-id_\oo\,,\]
where the first equality follows from Artin's theorem. Since $L_p$ is an orientation-preserving conformal transformation with scaling factor $|p|$, it follows at once that $L_p\circ L_J\circ L_{p^{-1}}$ is a constant OCS that induces the standard orientation.
 
Now let us prove the last statements. If $p\in\cc_J$, then $p,J,p^{-1}$ associate and commute, whence $L_p\circ L_J\circ L_{p^{-1}}=L_J$. Now suppose $p\in\oo\setminus\cc_J$, and let $V$ denote the $4$-dimensional associative subalgebra of $\oo$ generated by $J$ and $p$. For all $v\in V$, it holds that
\[(L_p\circ L_J\circ L_{p^{-1}})(v)=p(J(p^{-1}v))=(pJp^{-1})v\]
by Artin's theorem. For all $w\in V^\perp$, it holds that
\[(L_p\circ L_J\circ L_{p^{-1}})(w)=p(J(p^{-1}w))=(p^{-1}Jp)w\,.\]
To prove the last equality, we may assume without loss of generality $V=\hh$ and $w=\ell q$ for some $q\in\hh$. It holds that
\[p(J(p^{-1}(\ell q)))=p(J(\ell ((p^{-1})^cq)))=p(\ell(J^c(p^{-1})^cq))=\ell(p^cJ^c(p^{-1})^cq)=(p^{-1}Jp)(\ell q)\,.\]
To conclude, we observe:
\[pJp^{-1}=p^{-1}Jp\Leftrightarrow p^2J=Jp^2\Leftrightarrow p^2\in\cc_J\Leftrightarrow p\in\im(\oo)\,,\]
where the last equivalence follows from the assumption $p\in\oo\setminus\cc_J$.
\end{proof}

The subset of $Z_4$ described in Proposition~\ref{prop:compositionL} admits an alternative representation, described in the next result.

\begin{proposition}\label{prop:compositionR}
For all $J\in\s$ and all $p\in\oo\setminus\{0\}$, the composed transformation
\[(R_p\circ L_J\circ R_{p^{-1}})(x)=(J(xp^{-1}))p\]
belongs to $Z_4$. If $p\in\cc_J$, then $R_p\circ L_J\circ R_{p^{-1}}=L_J$. Now assume $p\in\oo\setminus\cc_J$: it holds that $R_p\circ L_J\circ R_{p^{-1}}\neq L_{J'}$ for all $J'\in\s$; moreover, $R_p\circ L_J\circ R_{p^{-1}}=L_{s^{-1}}\circ L_{sJs^{-1}}\circ L_s$, where $\pm s$ are the solutions of $x^2=p$.
\end{proposition}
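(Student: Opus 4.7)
The plan parallels the proof of Proposition~\ref{prop:compositionL}, but now the Moufang identities take center stage. First, I would verify $R_p\circ L_J\circ R_{p^{-1}}\in Z_4$: Artin's theorem applied to $\langle x,p\rangle$ gives $R_{p^{-1}}\circ R_p=\mr{id}_\oo$, so the composition squares to $-\mr{id}_\oo$; orthogonality follows because the scaling factors $|p|$ and $|p|^{-1}$ of $R_p$ and $R_{p^{-1}}$ (Theorem~\ref{thm:LandR}(1)) cancel; and the induced orientation is the standard one because the complex structure is conjugate to $L_J$ through the orientation-preserving linear map $R_p$. The case $p\in\cc_J$ is then immediate: the subalgebra $\langle J,x\rangle$ is associative by Artin's theorem and contains $\cc_J$, hence $p$ and $p^{-1}$, so $(J(xp^{-1}))p=J(x(p^{-1}p))=Jx$.

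Now assume $p\in\oo\setminus\cc_J$; then $p\notin\rr$, so there is a unique (up to sign) $I\in\s$ with $p\in\cc_I$, and a square root $s\in\cc_I$ of $p$, unique up to sign. I would prove (b) by showing that both sides of the claimed identity coincide with $(Jp^{-1})(px)$. Applying the right Moufang identity~\eqref{moufang2} with $(y,x,a)=(J,s^{-2},s^2x)$ and using Artin's theorem in $\langle s,x\rangle$ to collapse $s^{-2}(s^2x)s^{-2}$ to $xs^{-2}$ and the trailing $s^{-2}\cdot s^2$ to $1$ yields
\[(J(xp^{-1}))p=(Jp^{-1})(px).\]
For the right-hand side, the left Moufang identity~\eqref{moufang1} with $(x,a)=(s,Js^{-2})$ gives $(sJs^{-1})y=s((Js^{-2})(sy))$, once $s(Js^{-2})s$ is simplified to $sJs^{-1}$ via Artin's theorem in $\langle s,J\rangle$. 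Specializing $y=sx$ and using Artin once more to reduce $s\cdot sx$ to $s^2x$ and then to cancel the leading $s^{-1}\cdot s$, one obtains $(L_{s^{-1}}\circ L_{sJs^{-1}}\circ L_s)(x)=(Jp^{-1})(px)$ as well, proving the identity.

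Finally, for $R_p\circ L_J\circ R_{p^{-1}}\neq L_{J'}$ for all $J'\in\s$, I would invoke Proposition~\ref{prop:compositionL} with parameter $s^{-1}$ and imaginary unit $sJs^{-1}$: the composition $L_{s^{-1}}\circ L_{sJs^{-1}}\circ L_s$ equals some $L_{J'}$ if and only if $s^{-1}\in\cc_{sJs^{-1}}\cup\im(\oo)$. The main obstacle is to exclude both alternatives. Since $p\in\cc_I\setminus\cc_J$ forces $I\neq\pm J$, and since $s\in\cc_I$ commutes with $I$, the equality $sJs^{-1}=\pm I$ reduces to $J=\pm I$, which is false; consequently $\cc_I\cap\cc_{sJs^{-1}}=\rr$, and $s^{-1}\in\cc_{sJs^{-1}}$ would force $s^{-1}\in\rr$ and hence $p=s^2\in\rr\subset\cc_J$, a contradiction. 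The remaining case $s^{-1}\in\im(\oo)$ would make $s$ purely imaginary, whence $p=s^2$ would be a negative real and again would lie in $\cc_J$.
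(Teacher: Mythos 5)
Your proposal is correct, and for the essential case $p\in\oo\setminus\cc_J$ it takes a genuinely different route from the paper. The paper decomposes $\oo=V\oplus V^\perp$, where $V$ is the associative subalgebra generated by $J$ and $p$, and shows by an explicit Cayley--Dickson computation (reducing to $V=\hh$, $w=\ell q$) that $R_p\circ L_J\circ R_{p^{-1}}$ acts as $L_J$ on $V$ and as $L_{pJp^{-1}}$ on $V^\perp$; the non-equality with every $L_{J'}$ is then immediate from $pJp^{-1}\neq J$, and the identity with $L_{s^{-1}}\circ L_{sJs^{-1}}\circ L_s$ follows by comparing the actions of both maps on $V$ and $V^\perp$ through the computations inside the proof of Proposition~\ref{prop:compositionL}. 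You instead prove the identity first and coordinate-freely: the Moufang identities \eqref{moufang1}--\eqref{moufang2} combined with Artin's theorem show that both $R_p\circ L_J\circ R_{p^{-1}}$ and $L_{s^{-1}}\circ L_{sJs^{-1}}\circ L_s$ coincide with $x\mapsto (Jp^{-1})(px)$, i.e.\ with $L_{Jp^{-1}}\circ L_p$ (I checked both Moufang computations; they are sound, though the step from $J(xp^{-1})=\bigl((Jp^{-1})(px)\bigr)p^{-1}$ to the stated formula is really a right multiplication by $p$ together with $(zp^{-1})p=z$, not a literal cancellation of a ``trailing $s^{-2}s^{2}$''), and you then get the non-equality by applying the \emph{statement} of Proposition~\ref{prop:compositionL} with parameter $s^{-1}$ and unit $sJs^{-1}$, correctly excluding $s^{-1}\in\cc_{sJs^{-1}}$ via $\cc_I\cap\cc_{sJs^{-1}}=\rr$ and $s^{-1}\in\im(\oo)$ via $p=s^2$ real. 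What your route buys is a slicker, computation-free argument that also exhibits the map in the compact form $L_{Jp^{-1}}\circ L_p$; what the paper's route buys is the explicit ``two-block'' description ($L_J$ on $V$, $L_{pJp^{-1}}$ on $V^\perp$), which makes the non-equality self-contained. One detail you should make explicit when writing it up: Proposition~\ref{prop:compositionL} is stated for imaginary units, so you need the one-line check that $sJs^{-1}\in\s$ (it has $n(sJs^{-1})=1$ by multiplicativity of the norm and $\re(sJs^{-1})=\re(s^{-1}(sJ))=\re(J)=0$).
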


\begin{proof}
As in the previous Corollary, $R_p\circ L_J\circ R_{p^{-1}}$ is a constant OCS that induces the standard orientation.

If $p\in\cc_J$, then the algebra generated by $J,p$ and any $x\in\oo$ is associative, whence $R_p\circ L_J\circ R_{p^{-1}}=L_J$. Now suppose $p\in\oo\setminus\cc_J$ and let $V$ denote the $4$-dimensional associative subalgebra of $\oo$ generated by $J$ and $p$. For all $v\in V$, it holds that
\[(R_p\circ L_J\circ R_{p^{-1}})(v)=(J(vp^{-1}))p=Jv\]
by Artin's theorem. For all $w\in V^\perp$, it holds that
\[(R_p\circ L_J\circ R_{p^{-1}})(w)=(J(wp^{-1}))p=(pJp^{-1})w\,.\]
To prove the last equality, we may assume without loss of generality $V=\hh$ and $w=\ell q$ for some $q\in\hh$. It holds that
\[(J((\ell q)p^{-1}))p=(J(\ell(p^{-1}q)))p=(\ell(J^cp^{-1}q))p=\ell(pJ^cp^{-1}q)=((p^{-1})^cJp^c)(\ell q)=(pJp^{-1})(\ell q)\,.\]

We observe that the units $J$ and $pJp^{-1}\in\s$ are distinct because we have assumed $p\in\oo\setminus\cc_J$. As a consequence, $R_p\circ L_J\circ R_{p^{-1}}\neq L_{J'}$ for all $J'\in\s$.

The assumption $p\in\oo\setminus\cc_J$ also implies that the equation $x^2=p$ has exactly two solutions $\pm s$, not belonging to $\rr$ and included in the $2$-dimensional subalgebra generated by $1$ and $p$. This implies that the $4$-dimensional associative subalgebra of $\oo$ generated by $J':=sJs^{-1}$ and by $s^{-1}$ equals $V$. An inspection of the proof of Proposition~\ref{prop:compositionL} reveals that $L_{s^{-1}}\circ L_{J'}\circ L_{s}$ acts as $L_{s^{-1}J's}$ on $V$ and as $L_{sJ's^{-1}}$ on $V^\perp$. The equalities $s^{-1}J's= s^{-1}(sJs^{-1})s=J$ and $sJ's^{-1}=s(sJs^{-1})s^{-1}=s^2Js^{-2}=pJp^{-1}$  imply that $L_{s^{-1}}\circ L_{J'}\circ L_{s}$ acts as $L_J$ on $V$ and as $L_{pJp^{-1}}$ on $V^\perp$, whence $L_{s^{-1}}\circ L_{J'}\circ L_{s}=R_p\circ L_J\circ R_{p^{-1}}$.
\end{proof} 

We point out that $\oo\setminus\{0\}$ is not a multiplicative group but only a loop and that conjugation with the conformal transformation $L_p$ (for $p\in\oo\setminus\{0\}$) is no analog of an action on $Z_4$. Indeed, $L_q\circ(L_p\circ L_J\circ L_{p^{-1}})\circ L_{q^{-1}}$ may be different from $L_{qp}\circ L_J\circ L_{(qp)^{-1}}$.

\begin{example}
The value
\[(L_\ell\circ(L_i\circ L_j\circ L_{i^{-1}})\circ L_{\ell^{-1}})(1) = \ell(i(j(i^{-1}\ell^{-1})))=\ell(i(j(\ell i)^{-1}))\]
is different from the value
\[(L_{\ell i}\circ L_j\circ L_{(\ell i)^{-1}})(1)=(\ell i)(j(\ell i)^{-1}).\]
Indeed, $j(\ell i)^{-1}=-j(\ell i)=\ell(j i)=-\ell k$ and it holds that
\[-\ell(i(\ell k))=\ell(\ell (ik))=j\neq -j=-ki=-(\ell i)(\ell k).\]
\end{example}

Similar considerations apply to conjugation with $R_p$. Thus, further examples of constant OCS on $\oo$ can be produced.

Proposition~\ref{prop:compositionR} allows us to establish the next lemma, which will prove extremely useful throughout the paper.

\begin{lemma}\label{lem:scalarproduct}
Let $a,b\in\oo$ and let $J\in\s$. For all $v\in\cc_J$ it holds
\[\langle ab,v\rangle=\langle (Ja)b,Jv\rangle,\quad\langle (Ja)b,v\rangle=\langle J(ab),v\rangle\,.\]
Moreover, the orthogonal projection $\pi:(\cc_Ja)\,b\to\cc_J$ fulfills the equality $\pi((Ja)b)=J\pi(ab)$. As a consequence, $\pi$ is surjective if, and only if, $ab\not\in\cc_J^\perp$. Finally, $(\cc_Ja)\,b=\cc_J$ if, and only if, $ab\in\cc_J\setminus\{0\}$.
\end{lemma}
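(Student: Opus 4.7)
The plan is to first establish the two scalar-product identities, from which the projection formula is a direct computation in an orthonormal basis of $\cc_J$; the final equivalence will then follow from the multiplicativity of the octonionic norm. The whole argument rests on two tools that are already available: the symmetry properties $\langle xy,z\rangle=\langle y,x^cz\rangle=\langle x,zy^c\rangle$ (which follow from $\langle x,y\rangle=\re(xy^c)$, from $\re(uv)=\re(vu)$, and from the stated fact that $\re((u,v,w))=0$), and the observation that for every $v\in\cc_J$ and every $w\in\oo$ the associator $(J,v,w)$ vanishes. Indeed, writing $v=\alpha+\beta J$ with $\alpha,\beta\in\rr$ and using that the associator is alternating, $(J,v,w)$ is an $\rr$-linear combination of $(J,1,w)$ and $(J,J,w)$, both of which are zero.

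With these tools, the second identity becomes a routine rewriting:
\[\langle (Ja)b,v\rangle=\langle Ja,vb^c\rangle=-\langle a,J(vb^c)\rangle=-\langle a,(Jv)b^c\rangle=-\langle ab,Jv\rangle=\langle J(ab),v\rangle,\]
where we used $(J,v,b^c)=0$. For the first identity, the same style of rewriting, together with $J(Jv)=J^2v=-v$ (a consequence of $v\in\cc_J$), yields
\[\langle (Ja)b,Jv\rangle=-\langle a,J((Jv)b^c)\rangle=-\langle a,(J(Jv))b^c\rangle=\langle a,vb^c\rangle=\langle ab,v\rangle.\]
To derive the projection formula, I would expand $\pi(x)=\langle x,1\rangle+\langle x,J\rangle J$ with respect to the orthonormal basis $\{1,J\}$ of $\cc_J$ and apply the first identity with $v=1$ and $v=J$: this gives $\langle (Ja)b,1\rangle=-\langle ab,J\rangle$ and $\langle (Ja)b,J\rangle=\langle ab,1\rangle$, whence $\pi((Ja)b)=-\langle ab,J\rangle+\langle ab,1\rangle J=J\pi(ab)$.

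Since $(\cc_Ja)b$ is the real span of $ab$ and $(Ja)b$, its image under $\pi$ is the real span of $\pi(ab)$ and $J\pi(ab)$. When $\pi(ab)=0$ (i.e.\ $ab\in\cc_J^\perp$) this image is trivial; when $\pi(ab)\neq0$, the vectors $\pi(ab)$ and $J\pi(ab)$ are orthogonal and nonzero in $\cc_J$, so they span $\cc_J$. This proves the surjectivity equivalence. The forward direction of the last assertion is immediate, since $(\cc_Ja)b=\cc_J$ forces $ab\in\cc_J$ and $ab\neq0$ (the latter because the subspace $(\cc_Ja)b$ is $2$-dimensional). For the converse, assume $ab\in\cc_J\setminus\{0\}$: then $\pi(ab)=ab$ and the projection formula yields $\pi((Ja)b)=J(ab)$, a vector of modulus $|ab|$. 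The composition-algebra identity $n(xy)=n(x)n(y)$ gives $|(Ja)b|=|J|\,|a|\,|b|=|ab|$, which combined with $|\pi((Ja)b)|=|ab|$ forces $(Ja)b-J(ab)=0$ and hence $(Ja)b=J(ab)\in\cc_J$. Thus $(\cc_Ja)b\subseteq\cc_J$; the map $v\mapsto(va)b$ from $\cc_J$ to $\oo$ is injective because $\oo$ is a division algebra and $a,b\neq0$, so $(\cc_Ja)b$ is $2$-dimensional and the inclusion must be an equality. I do not anticipate a real obstacle here; the only delicate point is avoiding the direct study of the associator $(J,a,b)$ by replacing it with the norm-equality argument above.
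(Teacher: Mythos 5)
Your argument is correct, and it reaches the two scalar-product identities by a genuinely different route than the paper. The paper's proof (after disposing of $b=0$) writes $(Ja)b=(R_b\circ L_J\circ R_{b^{-1}})(ab)$ and $Jv=(R_b\circ L_J\circ R_{b^{-1}})(v)$ via Artin's theorem and then invokes Proposition~\ref{prop:compositionR} (together with Theorem~\ref{thm:LandR}) to see that this conjugated map preserves the scalar product, which gives the first identity at once; the second is then deduced from the first. You instead prove both identities by a self-contained computation using the adjunction formulas $\langle xy,z\rangle=\langle y,x^cz\rangle=\langle x,zy^c\rangle$ for the trace form and the vanishing of associators $(J,v,w)$ with $v\in\cc_J$ -- a more elementary argument that never needs $b\neq 0$, does not rely on the OCS machinery of Proposition~\ref{prop:compositionR}, and would transfer verbatim to other alternative $^*$-algebras with a multiplicative norm; what it loses is the conceptual link (exploited later in the paper) between this lemma and the structure $R_b\circ L_J\circ R_{b^{-1}}\in Z_4$. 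The remaining parts coincide with the paper's proof: the projection formula by computing the components on $1$ and $J$, the surjectivity criterion from the span of $\pi(ab)$ and $J\pi(ab)$, and the final equivalence via norm multiplicativity plus Pythagoras forcing $(Ja)b=J(ab)$; your only small variation there is concluding $(\cc_Ja)b=\cc_J$ by injectivity of $v\mapsto(va)b$ and a dimension count, where the paper simply observes that the span of the nonzero orthogonal vectors $ab$ and $J(ab)$ in $\cc_J$ is all of $\cc_J$.
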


\begin{proof}
The thesis is obvious if $b=0$. Thus, we may assume $b\neq0$. It holds that
\[(Ja)b=R_{b}(L_J(a))=R_{b}(L_J(R_{b^{-1}}(ab)))\]
and
\[Jv=R_{b}((Jv)b^{-1})=R_{b}(J(vb^{-1}))=R_{b}(L_J(R_{b^{-1}}(v)))\,,\]
where we have applied Artin's theorem. Thus,
\[\langle (Ja)b,Jv\rangle=\big\langle (R_b\circ L_J\circ R_{b^{-1}})(ab),(R_b\circ L_J\circ R_{b^{-1}})(v)\big\rangle=\langle ab,v\rangle\,,\]
where the last equality is a consequence of Proposition~\ref{prop:compositionR}. Now, $\langle (Ja)b,v\rangle=\langle(-1)(ab),Jv\rangle=-\langle J(ab),-v\rangle=\langle J(ab),v\rangle$, where the first and second equalities are repeated applications of the formula we already proved.

We can prove the second statement as follows. Let $v:=\pi(ab)$ and $w:=\pi((Ja)b)$. Combining the equalities we have just proven with Artin's theorem, we obtain: 
\begin{align*}
&\langle w,1\rangle=\langle(Ja)b,1\rangle=-\langle ab,J\rangle=-\langle v,J\rangle\,,\\
&\langle w,J\rangle=\langle (Ja)b,J\rangle=\langle ab,1\rangle=\langle v,1\rangle\,.
\end{align*}
This proves that $w=Jv$, as desired.

The third statement follows from the second one by observing that the image of $\pi$ is the span of the mutually orthogonal vectors $v$ and $w=Jv$. This image is $\cc_J$ if, and only if, $v\neq0$, which is equivalent to $ab\not\in\cc_J^\perp$.

Finally, we can prove the fourth statement as follows. The equality $(\cc_Ja)\,b=\cc_J$ implies $ab\in\cc_J\setminus\{0\}$: indeed, it implies $(1a)b\in\cc_J$ (because $1\in\cc_J$) and it implies $ab\neq0$ (by dimensional considerations). Conversely, suppose $ab\in\cc_J\setminus\{0\}$: then $\pi((Ja)b)=J(ab)$, whence $|(Ja)b-J(ab)|^2=|(Ja)b|^2-|J(ab)|^2=0$ and $(Ja)b=J(ab)\in\cc_J\setminus\{0\}$; thus, $(\cc_J a)\,b$, which is the span of $ab,(Ja)b$, equals $\cc_J$.
\end{proof}

A relevant example of a nonconstant OACS can be constructed on $\oo\setminus\rr$.

\begin{definition}\label{def:standardstructure}
The standard almost-complex structure $\J$ on $\oo\setminus\rr$ is defined by setting
\[\J_{x_0}(v):=\frac{\im(x_0)}{|\im(x_0)|}\,v\]
for all $x_0\in\oo\setminus\rr$ and for all $v\in T_{x_0}(\oo\setminus\rr)=\oo$.
\end{definition}

\begin{remark}
If $x_0\in\cc_J^+\setminus\rr$ then $\J_{x_0}:T_{x_0}(\oo\setminus\rr)\to T_{x_0}(\oo\setminus\rr)$ is the same as $L_J:\oo\to\oo$. As a consequence, the almost-complex structure $\J$ on $\oo\setminus\rr$ is orthogonal.
\end{remark}

Endowing $\oo\setminus\rr$ with $\J$ is equivalent to considering the decomposition
\[\oo\setminus\rr=\rr+\rr^+\s\simeq\cc^+\times S^6\,,\]
where $S^6$ is endowed with its standard almost-complex structure, introduced in the celebrated work~\cite{kirchhoff} and proven to be non integrable in~\cite{eckmannfrolicher,ehresmannlibermann}.

%%%%%%%%%%%%%%%%%%%%%%%

\section{Real differentials of octonionic slice regular functions}\label{sec:differential}

This section is devoted to a first study of the real differential and of the real Jacobian of octonionic slice regular functions. This study will be applied in Section~\ref{sec:inducedacs} and further refined in Section~\ref{sec:singularsets}.

Every slice regular function $f:\OO\to\oo$ is automatically real analytic. The same is true for its spherical value $\vs f:\OO\to\oo$ and for its spherical derivative $f'_s:\OO\setminus\rr\to\oo$. The interested reader may find a proof in~\cite[Proposition 7]{perotti}. In the next proposition, and in the rest of the paper, any mention of $J$ will automatically imply that $J\in\s$.

\begin{proposition}\label{prop:differential}
Let $f:\OO\to\oo$ be a slice function.
\begin{enumerate}
\item If $x_0\in\OO\cap\rr$ and $f$ is slice regular, then
\[df_{x_0}(v) = v\,f'_c(x_0)\]
for all $v\in T_{x_0}\OO=\oo$.
\item If $x_0\in\OO\setminus\rr$ and $x_0\in\cc_J$, let us split $T_{x_0}\OO=\oo$ as $\cc_J\oplus\cc_J^\perp$. For all $w$ in the $6$-dimensional vector space $\cc_J^\perp$, the partial derivative of $f$ in the $w$ direction at $x_0$ exists and it equals $wf'_s(x_0)$. If, moreover, $f$ is slice regular then
\[df_{x_0}(v+w) = v\,f'_c(x_0) + w\,f'_s(x_0)\]
for all $v \in \cc_J,w\in\cc_J^\perp$.
\end{enumerate}
\end{proposition}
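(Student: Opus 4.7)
The plan is to work through the stem function $F\colon D\to\oo_\cc$ that induces $f$, writing $F = F_1 + \ui F_2$ so that $f(\alpha + \beta J) = F_1(\alpha, \beta) + J\,F_2(\alpha, \beta)$ for every $J\in\s$. The stem condition $F(\overline z) = \overline{F(z)}$ forces $F_2(\alpha, 0)\equiv 0$, while slice regularity of $f$ is equivalent to the Cauchy--Riemann equations $\partial F_1/\partial\alpha = \partial F_2/\partial\beta$ and $\partial F_1/\partial\beta = -\partial F_2/\partial\alpha$. Under these identifications, at $x_0 = \alpha_0 + \beta_0 J$ one has $f'_c(x_0) = \partial F_1/\partial\alpha(\alpha_0,\beta_0) + J\,\partial F_2/\partial\alpha(\alpha_0,\beta_0)$, while for $\beta_0\neq 0$ one has $f'_s(x_0) = F_2(\alpha_0,\beta_0)/\beta_0$: these are my target right-hand sides.

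First I would handle Part~1 and the $\cc_J$-component of Part~2 together, since they share a common calculation. For $v = a + bJ\in\cc_J$ (in Part~1 choosing $J\in\s$ so that $v\in\cc_J$), the curve $x_0 + tv$ stays inside the slice $\cc_J$; differentiating the slice expression $F_1(\alpha_0+ta,\beta_0+tb)+J\,F_2(\alpha_0+ta,\beta_0+tb)$ in $t$ at $t=0$ and using Cauchy--Riemann to eliminate the $\beta$-derivatives, then factoring $(a+bJ)$ on the left, produces $v\,f'_c(x_0)$. In Part~1 the extra inputs $F_2(\alpha_0,0)=\partial F_2/\partial\alpha(\alpha_0,0)=0$ force $f'_c(x_0)\in\oo$ rather than merely $\cc_J$, so the expression $v\,f'_c(x_0)$ is unambiguous across all choices of the auxiliary $J$.

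The technical heart of the argument is the partial derivative in a direction $w\in\cc_J^\perp$, which does not require slice regularity. Since $w\in\im(\oo)$ and $w\perp J$, the curve $x(t) := x_0 + tw = \alpha_0 + (\beta_0 J + tw)$ has $|\beta_0 J + tw|^2 = \beta_0^2 + t^2|w|^2$, so I can parameterize $x(t) = \alpha_0 + \beta(t)J(t)$ with $\beta(t):=\sqrt{\beta_0^2 + t^2|w|^2}$ and $J(t):=(\beta_0 J + tw)/\beta(t)\in\s$. A direct calculation yields $\beta(0)=\beta_0$, $\beta'(0)=0$, and $J'(0) = w/\beta_0$. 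Applying the chain rule to $f(x(t)) = F_1(\alpha_0,\beta(t)) + J(t)\,F_2(\alpha_0,\beta(t))$ then gives
\[
\left.\frac{d}{dt}\right|_{t=0} f(x(t)) = J'(0)\,F_2(\alpha_0,\beta_0) = \frac{w}{\beta_0}\,F_2(\alpha_0,\beta_0) = w\,f'_s(x_0),
\]
the final step using only that $1/\beta_0\in\rr$ associates freely with everything.

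For the last assertion of Part~2, slice regularity provides real analyticity of $f$, so $df_{x_0}$ is a genuine $\rr$-linear map and it suffices to sum the contributions already obtained on the complementary subspaces $\cc_J$ and $\cc_J^\perp$. The main obstacle, as I see it, is the transverse calculation: the parameterization $\alpha_0 + \beta(t)J(t)$ must be set up carefully, and the crucial cancellation $\beta'(0)=0$ is precisely what the orthogonality $w\perp J$ delivers. All the relevant multiplications in that step stay inside the associative subalgebra generated by $J$ and $w$, so Artin's theorem prevents the nonassociativity of $\oo$ from causing trouble.
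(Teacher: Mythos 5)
Your argument is correct and is essentially the paper's own proof written out in stem-function coordinates: the identity $f(\alpha+\beta J)=F_1(\alpha,\beta)+J\,F_2(\alpha,\beta)$ is the representation formula $f=\vs f+\im\cdot f'_s$ (with $\vs f=F_1$ and $f'_s=F_2/\beta$), your explicit parameterization with $\beta'(0)=0$ and $J'(0)=w/\beta_0$ is precisely the paper's observation that $\vs f$ and $f'_s$ are constant on $\s_{x_0}$, and the $\cc_J$-direction computation via the Cauchy--Riemann equations is the definition of slice regularity. The only caveat -- shared with the paper's one-line tangential step -- is that differentiating along $x_0+tw$ tacitly uses differentiability of the stem in $\beta$, which is harmless since the functions to which the statement is applied are at least of class $\mc{S}^1$ (indeed real analytic).
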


\begin{proof}
First suppose $x_0\in\OO\cap\rr$: the thesis follows from the definition of slice regular function.

Now suppose $x_0\in\OO\setminus\rr$. Let us consider Formula~\eqref{eq:representation} and take into account the fact that $\vs  f,f'_s$ are constant in $\s_{x_0}$. If $w\in T_{x_0}\s_{x_0}=\cc_J^\perp$, we immediately conclude that the partial derivative of $f$ in the $w$ direction at $x_0$ exists and equals $wf'_s(x_0)$. If, moreover, $f$ is slice regular, then for each $v \in \cc_J$ the partial derivative of $f$ in the $v$ direction at $x_0$ equals $vf'_c(x_0)$.
\end{proof}

Now let us look at the range of the differential.

\begin{theorem}\label{thm:rankofdifferential}
Let $f:\OO\to\oo$ be a slice regular function.
\begin{enumerate}
\item If $x_0\in\OO\cap\rr$, then the range of $df_{x_0}$ is either $\{0\}$ or $\oo$, depending on whether or not $f'_c(x_0)=0$. As a consequence, $df_{x_0}$ is invertible if, and only if, $f'_c(x_0)\neq0$. In such a case, $df_{x_0}$ is an orientation-preserving conformal transformation.
\item If we fix $x_0\in\OO_J\setminus\rr$ then the range of $df_{x_0}$ is the sum $V+W$ with $V:=\cc_J\,f'_c(x_0), W:=\cc_J^\perp\,f'_s(x_0)$. Moreover:
\begin{enumerate}
\item $V+W=\{0\}$ if $f'_s(x_0)=0=f'_c(x_0)$;
\item $V+W=V$ is a $2$-dimensional vector space if $f'_s(x_0)=0\neq f'_c(x_0)$;
\item $V+W=W$ is a $6$-dimensional vector space if $f'_s(x_0)\neq0$ and $f'_c(x_0)f'_s(x_0)^{-1}\in\cc_J^\perp$;
\item $V\oplus W=\oo$ if $f'_s(x_0)\neq0$ and $f'_c(x_0)f'_s(x_0)^{-1}\not\in\cc_J^\perp$. The direct sum is orthogonal if, and only if, $f'_c(x_0)f'_s(x_0)^{-1}\in\cc_J\setminus\{0\}$.
\end{enumerate}
As a consequence, $df_{x_0}$ is invertible if, and only if, $f'_s(x_0)\neq0$ and $f'_c(x_0)f'_s(x_0)^{-1}\not\in\cc_J^\perp$. It is an orientation-preserving conformal transformation if, and only if, $f'_s(x_0)\neq0$, $f'_c(x_0)f'_s(x_0)^{-1}\in\cc_J$ and $|f'_c(x_0)f'_s(x_0)^{-1}|=1$.
\end{enumerate}
\end{theorem}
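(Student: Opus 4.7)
The plan is to apply Proposition~\ref{prop:differential} to write $df_{x_0}$ explicitly and then read off its image. Part~1 is immediate: for $x_0\in\OO\cap\rr$, $df_{x_0}=R_{f'_c(x_0)}$, so the range is $\{0\}$ or $\oo$ depending on whether $f'_c(x_0)=0$, and Theorem~\ref{thm:LandR} promotes the nonzero case to an orientation-preserving conformal transformation. Throughout Part~2 I set $p:=f'_c(x_0)$ and $q:=f'_s(x_0)$, so that $df_{x_0}(v+w)=vp+wq$ for $v\in\cc_J$, $w\in\cc_J^\perp$, and the range is $V+W$.

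The case analysis rests on three observations. (i) By Theorem~\ref{thm:LandR}, $R_q$ is orthogonal up to the scaling $|q|$, so when $q\neq0$ one has the orthogonal decomposition $\oo=\cc_Jq\oplus\cc_J^\perp q$ with $W^\perp=\cc_Jq$. (ii) Alternativity gives $(xy^{-1})y=x$, so $p=(pq^{-1})q$; hence $pq^{-1}\in\cc_J^\perp$ iff $p\in W$, and $pq^{-1}\in\cc_J$ iff $p\in W^\perp$. (iii) The subspace $W$ is $L_J$-invariant: for $\mu\in\cc_J^\perp$ one has $J\mu\in\cc_J^\perp$ (since $\cc_J\cdot\cc_J^\perp\subseteq\cc_J^\perp$), so $(J\mu)q\in\cc_J^\perp q$ is orthogonal to $q\in\cc_Jq$ by (i); the second identity of Lemma~\ref{lem:scalarproduct} then gives $\langle J(\mu q),q\rangle=\langle(J\mu)q,q\rangle=0$, and $\langle J(\mu q),Jq\rangle=\langle\mu q,q\rangle=0$ follows from orthogonality of $L_J$ together with (i).

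The cases then fall into line. Cases~(a) and~(b) follow from the injectivity of $R_p$ for $p\neq 0$. Case~(c): (ii) and (iii) give $p,Jp\in W$, so $V=\mathrm{span}(p,Jp)\subseteq W$, with $\dim W=6$ from injectivity of $R_q$. Case~(d): (ii) gives $p\notin W$, so $V\cap W$ is a proper $L_J$-invariant subspace of $V$; since $L_J^2=-\mathrm{id}$ forbids $1$-dimensional invariant subspaces and $\dim V=2$, this forces $V\cap W=\{0\}$ and $V\oplus W=\oo$ by dimension. Orthogonality of this sum amounts to $V=W^\perp=\cc_Jq$, and a short computation (using $J(Jq)=-q$ to verify $\lambda(\mu q)=(\lambda\mu)q$ for $\lambda,\mu\in\cc_J$) shows this happens precisely when $pq^{-1}\in\cc_J\setminus\{0\}$.

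For the orientation-preserving conformal criterion, I would factor $df_{x_0}=R_q\circ M$ in case~(d) with $\mu:=pq^{-1}\in\cc_J\setminus\{0\}$, where $M$ acts as $R_\mu$ on $\cc_J$ and as the identity on $\cc_J^\perp$. Then $M$ is orthogonal iff $|\mu|=1$ and always has $\det M=|\mu|^2>0$; combined with Theorem~\ref{thm:LandR} for $R_q$, this gives the sufficient direction. Conversely, conformality of $df_{x_0}$ forces $V\perp W$ (angle preservation on $\cc_J\oplus\cc_J^\perp$) and $|p|=|q|$ (equal scaling on $\cc_J$ and $\cc_J^\perp$), which via (ii) translate to $\mu\in\cc_J$ and $|\mu|=1$; orientation-preservation is then automatic from the factorization. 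The main obstacle I anticipate is establishing (iii), which is the one step genuinely using Lemma~\ref{lem:scalarproduct}; the rest is bookkeeping.
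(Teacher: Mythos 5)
Your overall strategy is sound and close in spirit to the paper's: the paper also runs the same case analysis on $a:=f'_c(x_0)$, $b:=f'_s(x_0)$ and reduces everything to Lemma~\ref{lem:scalarproduct} plus the conformality of right multiplications, but it does so by first applying $R_{b^{-1}}$, rewriting the range as $(\cc_Ja)b^{-1}+\cc_J^\perp$ and using the projection statement $\pi((Ja)b^{-1})=J\pi(ab^{-1})$; you instead stay on the untranslated side, using the orthogonal splitting $\oo=\cc_Jq\oplus\cc_J^\perp q$ and the $L_J$-invariance of $W$, and your explicit factorization $df_{x_0}=R_q\circ M$ makes the final orientation-preserving-conformality criterion more transparent than the paper's one-line appeal to $R_b$ being conformal.

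There is, however, one incorrectly justified step, in your observation (iii). The second identity of Lemma~\ref{lem:scalarproduct}, $\langle (Ja)b,v\rangle=\langle J(ab),v\rangle$, is stated only for $v\in\cc_J$: it encodes the equality of $\cc_J$-projections $\pi((Ja)b)=J\pi(ab)$ and cannot hold for arbitrary test vectors, since that would force $(Ja)b=J(ab)$ for all $a,b$. You apply it with $v=q=f'_s(x_0)$, which in general does not lie in $\cc_J$, so the equality $\langle J(\mu q),q\rangle=\langle(J\mu)q,q\rangle$ is not covered by the lemma as cited. The conclusion you need, $J(\mu q)\perp q$, is nonetheless true and follows from tools you already invoke: by orthogonality of $L_J$ (Theorem~\ref{thm:LandR}) and alternativity, $\langle J(\mu q),q\rangle=\langle J(J(\mu q)),Jq\rangle=-\langle \mu q,Jq\rangle$, which vanishes by your observation (i) because $\mu q\in\cc_J^\perp q$ and $Jq\in\cc_Jq$. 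With that repair (iii) holds, and the rest of your argument goes through, provided you also record the small implicit points that $p\neq0$ in case (d) (since $0\in\cc_J^\perp$) and that $V=\cc_Jp$ is itself $L_J$-invariant by Artin's theorem, which is needed for $V\cap W$ to be an $L_J$-invariant subspace.
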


\begin{proof}
The first statements immediately follow from Proposition~\ref{prop:differential}. As for the others, let $a:=f'_c(x_0)$ and $b:=f'_s(x_0)$, so that $V=\cc_Ja,W=\cc_J^\perp b$. 
\begin{enumerate}
\item[$(a)$] If $a=0=b$ then $V=W=\{0\}$.
\item[$(b)$] If $a\neq0=b$ then $V$ is a $2$-dimensional vector space and $W=\{0\}$.
\item[$(c)$] Suppose $b\neq0$ and $ab^{-1}\in\cc_J^\perp$. By Lemma~\ref{lem:scalarproduct}, $(Ja)b^{-1}$ belongs to $\cc_J^\perp$, too. Thus,
\[Vb^{-1}+Wb^{-1}=(\cc_Ja)\,b^{-1}+\cc_J^\perp = \rr\,ab^{-1} + \rr\,(Ja)b^{-1} + \cc_J^\perp = \cc_J^\perp\,,\]
whence $V+W$ is the $6$-dimensional vector space $\cc^\perp_Jb=W$.
\item[$(d)$] Suppose $b\neq0$ and $ab^{-1}\not\in\cc_J^\perp$. By Lemma~\ref{lem:scalarproduct}, the orthogonal projection $\pi:(\cc_Ja)\,b^{-1}\to\cc_J$ is surjective. Thus, 
\[Vb^{-1}+Wb^{-1}=(\cc_Ja)\,b^{-1}+\cc_J^\perp =\cc_J\oplus\cc_J^\perp=\oo\,.\]
Moreover, the direct sum $V\oplus W$ is orthogonal if, and only if, $Vb^{-1}=(\cc_Ja)\,b^{-1}$ equals $\cc_J$. By Lemma~\ref{lem:scalarproduct}, this happens if, and only if, $ab^{-1}\in\cc_J\setminus\{0\}$. The thesis now follows from the fact that $R_b$ is an orientation-preserving conformal transformation of $\oo$ mapping $Vb^{-1}$ to $V$ and $Wb^{-1}$ to $W$.\qedhere
\end{enumerate}
\end{proof}

\begin{corollary}\label{cor:singularset}
Let $f:\OO\to\oo$ be a slice regular function. Consider its \emph{singular set}
\[N_f:=\{x_0\in\OO \, | \, df_{x_0}\mathrm{\ is\ not\ invertible}\}
\]
and its \emph{degenerate set} $D_f:=V(f'_s)$. Then $N_f$ includes $D_f$ and
\begin{equation}\label{eq:rank6}
N_f\setminus D_f=\{x_0\in\OO\cap\rr \, | \, f'_c(x_0)=0\}\cup\bigcup_{J\in\s}\{x_0\in\OO_J\setminus\rr \, | \, f'_s(x_0)\neq 0, f'_c(x_0)f'_s(x_0)^{-1}\in\cc_J^\perp\}\,.
\end{equation}
\end{corollary}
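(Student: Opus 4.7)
The plan is to read off this corollary directly from Theorem~\ref{thm:rankofdifferential}, by separating real from non-real base points and carefully tracking in which of the cases $(a)$–$(d)$ of part $(2)$ the differential fails to be invertible. There is no substantive new idea needed; the only mild subtlety is that $f'_s$ is defined only on $\OO\setminus\rr$, so the set $D_f=V(f'_s)$ is contained in $\OO\setminus\rr$ and contributes nothing at real points.

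First I would establish the inclusion $D_f\subseteq N_f$. If $x_0\in D_f$, then $x_0\in\OO_J\setminus\rr$ for some $J\in\s$ and $f'_s(x_0)=0$. By Theorem~\ref{thm:rankofdifferential}$(2)$ one is in case $(a)$ or $(b)$: the range of $df_{x_0}$ is either $\{0\}$ (if also $f'_c(x_0)=0$) or the $2$-dimensional subspace $\cc_J\,f'_c(x_0)$. In both situations $df_{x_0}$ is not surjective, hence not invertible, so $x_0\in N_f$.

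Next I would describe $N_f\setminus D_f$ by partitioning $\OO$ into $\OO\cap\rr$ and $\bigcup_{J\in\s}(\OO_J\setminus\rr)$. For $x_0\in\OO\cap\rr$, the set $D_f$ is irrelevant (since $D_f\subseteq\OO\setminus\rr$), and Theorem~\ref{thm:rankofdifferential}$(1)$ gives $x_0\in N_f$ if and only if $f'_c(x_0)=0$; this yields the first piece on the right-hand side of \eqref{eq:rank6}. For $x_0\in\OO_J\setminus\rr$ with $x_0\notin D_f$ we have $f'_s(x_0)\neq0$, so we are in case $(c)$ or $(d)$ of Theorem~\ref{thm:rankofdifferential}$(2)$: invertibility fails exactly in case $(c)$, namely when $f'_c(x_0)f'_s(x_0)^{-1}\in\cc_J^\perp$. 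Taking the union over $J\in\s$ produces the second piece of \eqref{eq:rank6}, and combining the two cases gives the claimed equality.

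The only thing to be careful about is the well-definedness of the ``slice'' containing a non-real $x_0$: the octonion $x_0\in\OO\setminus\rr$ lies in $\cc_J$ for a $J\in\s$ that is unique up to sign, but the condition $f'_c(x_0)f'_s(x_0)^{-1}\in\cc_J^\perp$ depends only on $\cc_J$, not on the choice of sign of $J$, so the union over $J\in\s$ on the right-hand side is unambiguous. Since everything else is a direct rereading of Theorem~\ref{thm:rankofdifferential}, I expect no real obstacle in the proof.
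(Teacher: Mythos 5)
Your proposal is correct and follows exactly the route the paper intends: the corollary is stated without a separate proof precisely because it is a direct rereading of Theorem~\ref{thm:rankofdifferential}, splitting $\OO$ into real and non-real points and noting that invertibility fails in cases $(a)$, $(b)$, $(c)$ and holds in case $(d)$. Your added remark that the condition $f'_c(x_0)f'_s(x_0)^{-1}\in\cc_J^\perp$ depends only on $\cc_J$ (not on the sign of $J$) is a sensible check and consistent with the paper.
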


The previous results can be reread in appropriate coordinates. We begin with a useful technical lemma.

\begin{lemma}\label{lemma:jacobian}
Let $\B=\{1,J,J_1,JJ_1,J_2,JJ_2,J_3,JJ_3\}$ be a distinguished splitting basis of $\oo$ and let us denote the real components of any vector $v$ with respect to $\B$ as $v_0,\ldots,v_7$. Fix octonions $a,b$ with $b\neq0,ab^{-1}\not\in\cc_J^\perp$ and set
\[\B(a,b):=\{a,Ja,J_1b,(JJ_1)b,J_2b,(JJ_2)b,J_3b,(JJ_3)b\}\,.\]
Then $\B(a,b)$ is a basis of $\oo$ and
\[M^{\B(a,b)}_\B(id)=\left(
\begin{array}{rrrrrrrr}
a_0 & -a_1 & -b_2 & -b_3 & -b_4 & -b_5 & -b_6 & -b_7\\
a_1 &  a_0 & b_3 & -b_2 & b_5 & -b_4 & b_7 & -b_6\\
a_2 & -a_3 & b_0 & b_1 & b_6 & -b_7 & -b_4 & b_5\\
a_3 &  a_2 & -b_1 & b_0 & -b_7 & -b_6 & b_5 & b_4\\
a_4 & -a_5 & -b_6 & b_7 & b_0 & b_1 & b_2 & -b_3\\
a_5 &  a_4 & b_7 & b_6 & -b_1 & b_0 & -b_3 & -b_2\\
a_6 & -a_7 & b_4 & -b_5 & -b_2 & b_3 & b_0 & b_1\\
a_7 &  a_6 & -b_5 & -b_4 & b_3 & b_2 & -b_1 & b_0
\end{array}
\right)\,.\]
Moreover,
\[\det M^{\B(a,b)}_\B(id) = |b|^4(\re^2(ab^{c})+\re^2(J(ab^{c}))).\]
\end{lemma}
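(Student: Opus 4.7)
The stated expression for $M:=M^{\B(a,b)}_\B(\mr{id})$ is verified by a routine but lengthy direct calculation. Using the $^*$-algebra isomorphism supplied by the distinguished property of $\B$, one may assume $J=\ell$, $J_1=i$, $J_2=j$, $J_3=k$, and then each of the eight columns is obtained by expanding $a$, $Ja$, $J_k b$, $(JJ_k)b$ in $\B$ via the Cayley-Dickson formula.

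For the determinant, the plan is to apply the right-multiplication $R_{b^{-1}}:\oo\to\oo$ to every column of $M$. By Theorem~\ref{thm:LandR}, $R_{b^{-1}}$ is an orientation-preserving conformal transformation of $\oo$ with scale factor $|b|^{-1}$, hence a linear automorphism of determinant $|b|^{-8}$. By alternativity, $(J_k b)b^{-1}=J_k$ and $((JJ_k)b)b^{-1}=JJ_k$, so the transformed matrix $N$ has its last six columns equal to the basis vectors $J_1,JJ_1,J_2,JJ_2,J_3,JJ_3$ of $\B$ themselves (i.e.\ the standard vectors $e_3,\ldots,e_8$), while its first two columns are $p:=ab^{-1}$ and $q:=(Ja)b^{-1}$. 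Hence $N$ is block upper-triangular and $\det M=|b|^8\det N=|b|^8\det A'$, where $A'$ is the $2\times2$ top-left block whose entries are the $1$- and $J$-components of $p$ and $q$ in $\B$.

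To compute these four entries, I would use $b^{-1}=b^c/n(b)$, the identities $\langle x,1\rangle=\re(x)$ and $\langle x,J\rangle=-\re(Jx)$, and the $v=1$ specialisation of Lemma~\ref{lem:scalarproduct}, which reads $\re((Jx)y)=\re(J(xy))$. Applied with $x=a$, $y=b^c$ this gives $\re((Ja)b^c)=\re(J(ab^c))$; applied with $x=Ja$, $y=b^c$, together with the alternative identity $J(Ja)=J^2a=-a$, it yields $\re(J((Ja)b^c))=\re((J(Ja))b^c)=-\re(ab^c)$. Combining these one obtains
\[A'=\frac{1}{n(b)}\begin{pmatrix}\re(ab^c)&\re(J(ab^c))\\-\re(J(ab^c))&\re(ab^c)\end{pmatrix},\]
so that $\det A'=(\re^2(ab^c)+\re^2(J(ab^c)))/n(b)^2$ and therefore $\det M=|b|^8\det A'=|b|^4\bigl(\re^2(ab^c)+\re^2(J(ab^c))\bigr)$. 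The basis statement is then immediate: $\det M\neq 0$ exactly when $\pi_{\cc_J}(ab^c)\neq 0$, i.e.\ when $ab^{-1}\notin\cc_J^\perp$, which is the standing hypothesis.

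The main delicacy lies in the entry $A'_{22}$, which forces one to commute $J$ across the nonassociative product $J((Ja)b^c)$; Lemma~\ref{lem:scalarproduct} is precisely the tool that controls this, and the orientation-preserving character of $R_{b^{-1}}$ (Theorem~\ref{thm:LandR}) is what guarantees that the final sign comes out $+$ rather than $-$.
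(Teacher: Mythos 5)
Your proposal is correct and follows essentially the same route as the paper: factoring out the right multiplication by $b$ (determinant $|b|^8$, via Theorem~\ref{thm:LandR}) reduces the computation to the $2\times2$ block in the $\cc_J$-coordinates of $ab^{-1}$ and $(Ja)b^{-1}$, which you evaluate with Lemma~\ref{lem:scalarproduct} exactly as the paper does with its auxiliary basis $\C=\{ab^{-1},(Ja)b^{-1},J_1,\ldots,JJ_3\}$. The only quibble is cosmetic: your transformed matrix is block \emph{lower}-triangular (zero block in the upper right), not upper-triangular, but the determinant conclusion is unaffected.
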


\begin{proof}
The first formula is proven by direct computation, using the fact that $\B$ is a distinguished splitting basis and the following equalities:
\begin{align*}
a&=\sum_{t=0}^3(a_{2t}+a_{2t+1}J)J_t\,,\\
Ja&=\sum_{t=0}^3(-a_{2t+1}+a_{2t}J)J_t\,,\\
b&=\sum_{t=0}^3(b_{2t}+b_{2t+1}J)J_t\,,
\end{align*}
where $J_0:=1$. To prove the second formula, let us introduce a third basis of $\oo$. We define
\[\C:=\{ab^{-1},(Ja)b^{-1},J_1,JJ_1,J_2,JJ_2,J_3,JJ_3\}\,.\]
By Lemma \ref{lem:scalarproduct}, $\C$ is a basis of $\oo$. We notice that $\B(a,b)=R_b(\C)$, where $R_b:\oo\to\oo$ is the right multiplication by $b$. Thus, $\B(a,b)$ is a basis of $\oo$ and
\[M^{\B(a,b)}_\B(id)=M^{\C}_\B(id)\,M^{\B(a,b)}_\C(id)=M^{\C}_\B(id)\,M^{\C}_\C(R_b)\,.\]
By Theorem~\ref{thm:LandR}, we conclude that $\det M^{\C}_\C(R_b)=|b|^8$. Moreover,
\[M^{\C}_\B(id)=\left(
\begin{array}{rrrrrrrr}
(ab^{-1})_0 & ((Ja)b^{-1})_0 & 0 & 0 & 0 & 0 & 0 & 0\\
(ab^{-1})_1 & ((Ja)b^{-1})_1 & 0 & 0 & 0 & 0 & 0 & 0\\
(ab^{-1})_2 & ((Ja)b^{-1})_2 & 1 & 0 & 0 & 0 & 0 & 0\\
(ab^{-1})_3 & ((Ja)b^{-1})_3 & 0 & 1 & 0 & 0 & 0 & 0\\\
(ab^{-1})_4 & ((Ja)b^{-1})_4 & 0 & 0 & 1 & 0 & 0 & 0\\
(ab^{-1})_5 & ((Ja)b^{-1})_5 & 0 & 0 & 0 & 1 & 0 & 0\\\
(ab^{-1})_6 & ((Ja)b^{-1})_6 & 0 & 0 & 0 & 0 & 1 & 0\\
(ab^{-1})_7 & ((Ja)b^{-1})_7 & 0 & 0 & 0 & 0 & 0 & 1
\end{array}
\right)\,,\]
where, according to Lemma~\ref{lem:scalarproduct},

\begin{align*}
(ab^{-1})_0 &=\langle ab^{-1},1\rangle=\re(ab^{-1})\,,\\
((Ja)b^{-1})_1 &=\langle(Ja)b^{-1},J\rangle=\langle ab^{-1},1\rangle=\re(ab^{-1})\,,\\
(ab^{-1})_1&=\langle ab^{-1},J\rangle= -\re(J(ab^{-1}))\,,\\
((Ja)b^{-1})_0&=\langle(Ja)b^{-1},1\rangle=-\langle ab^{-1},J\rangle=\re(J(ab^{-1}))\,.
\end{align*}
As a consequence,
\begin{align*}
\det M^{\B(a,b)}_\B(id)&= \det M^{\C}_\B(id)\,\det M^{\C}_\C(R_b)\\
&=\left(\re^2(ab^{-1})+\re^2(J(ab^{-1}))\right)|b|^8\\
&=|b|^4\left(\re^2(ab^{c})+\re^2(J(ab^{c}))\right)\,.\qedhere
\end{align*}
\end{proof}

We are now ready to prove the next result.

\begin{theorem}\label{thm:jacobian}
Let $f:\OO\to\oo$ be a slice regular function. Fix $x_0\in\OO_J$ and a distinguished splitting basis $\B=\{1,J,J_1,JJ_1,J_2,JJ_2,J_3,JJ_3\}$ of $\oo$.
\begin{enumerate}
\item Suppose $x_0\not\in\rr$. If $f'_s(x_0)\neq0$ and $f'_c(x_0)f'_s(x_0)^{-1}\not\in\cc_J^\perp$ then
\[M^\B_\B(df_{x_0})=M^{\B(f'_c(x_0),f'_s(x_0))}_\B(id)\]
and
\[\det(df_{x_0})=|f'_s(x_0)|^4(\re^2(f'_c(x_0)f'_s(x_0)^{c})+\re^2(J(f'_c(x_0)f'_s(x_0)^{c})))\,.\]
The last equality also holds true when either $f'_s(x_0)=0$ or $f'_c(x_0)f'_s(x_0)^{-1}\in\cc_J^\perp$, in which cases both hands of the equality vanish.
\item Suppose $x_0\in\rr$. If $f'_c(x_0)\neq0$ then
\[M^\B_\B(df_{x_0})=M^{\B(f'_c(x_0),f'_c(x_0))}_\B(id)\]
and
\[\det(df_{x_0})=|f'_c(x_0)|^8\,.\]
The last equality also holds true when $f'_c(x_0)=0$, in which case both hands of the equality vanish.
\end{enumerate}
\end{theorem}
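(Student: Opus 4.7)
The plan is to combine Proposition~\ref{prop:differential} with Lemma~\ref{lemma:jacobian}: the first identifies the images of the basis $\B$ under $df_{x_0}$, while the second yields the determinant of the resulting change-of-basis matrix. The degenerate cases are then handled by a direct check that both sides vanish.

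I begin with case 1, assuming $x_0 \in \OO_J \setminus \rr$. Writing $a := f'_c(x_0)$ and $b := f'_s(x_0)$, Proposition~\ref{prop:differential}(2) gives $df_{x_0}(v) = va$ for $v \in \cc_J$ and $df_{x_0}(w) = wb$ for $w \in \cc_J^\perp$. Applied to the eight vectors of $\B = \{1, J, J_1, JJ_1, J_2, JJ_2, J_3, JJ_3\}$, this yields exactly the ordered family $\B(a,b) = \{a, Ja, J_1 b, (JJ_1)b, J_2 b, (JJ_2)b, J_3 b, (JJ_3)b\}$ of Lemma~\ref{lemma:jacobian}. Hence $M^\B_\B(df_{x_0}) = M^{\B(a,b)}_\B(id)$. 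When $b \neq 0$ and $ab^{-1} \notin \cc_J^\perp$, Lemma~\ref{lemma:jacobian} immediately gives the stated determinant formula.

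To dispose of the degenerate subcases I would argue as follows. If $b = 0$, then the factor $|b|^4 = 0$ makes the right-hand side vanish, while Theorem~\ref{thm:rankofdifferential}(2)(a,b) implies $\det(df_{x_0}) = 0$. If $b \neq 0$ but $ab^{-1} \in \cc_J^\perp$, then $ab^c = n(b)\,ab^{-1} \in \cc_J^\perp$; since $L_J$ is orthogonal and satisfies $L_J(\cc_J) = \cc_J$, it also preserves $\cc_J^\perp$, so $J(ab^c) \in \cc_J^\perp$ and both real parts in the formula vanish. Simultaneously, Theorem~\ref{thm:rankofdifferential}(2)(c) guarantees that $df_{x_0}$ has rank $6$, so $\det(df_{x_0}) = 0$ as well.

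For case 2, where $x_0 \in \OO \cap \rr$, Proposition~\ref{prop:differential}(1) gives $df_{x_0}(v) = v f'_c(x_0)$ for all $v \in \oo$, which sends $\B$ to $\B(f'_c(x_0), f'_c(x_0))$; thus $M^\B_\B(df_{x_0}) = M^{\B(f'_c(x_0), f'_c(x_0))}_\B(id)$. If $f'_c(x_0) \neq 0$, setting $a = b = f'_c(x_0)$ in Lemma~\ref{lemma:jacobian} is legitimate since $ab^{-1} = 1 \notin \cc_J^\perp$, and the simplification $ab^c = n(a) \in \rr \subset \cc_J$ yields $\re(ab^c) = |a|^2$ and $\re(J(ab^c)) = 0$; the lemma then gives $\det(df_{x_0}) = |a|^4 \cdot |a|^4 = |f'_c(x_0)|^8$. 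The remaining case $f'_c(x_0) = 0$ is immediate, as $df_{x_0} \equiv 0$ and both sides vanish. The only mildly delicate point I foresee is the verification that both summands of the determinant formula vanish when $ab^{-1} \in \cc_J^\perp$, which rests on the invariance of $\cc_J^\perp$ under $L_J$ recorded above.
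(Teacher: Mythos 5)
Your proposal is correct and follows essentially the same route as the paper: Proposition~\ref{prop:differential} identifies the images of the basis $\B$ under $df_{x_0}$ as the family $\B(a,b)$, Lemma~\ref{lemma:jacobian} supplies the determinant, and Theorem~\ref{thm:rankofdifferential} handles the degenerate cases. Your explicit verification that $\re(ab^c)=\re(J(ab^c))=0$ when $ab^{-1}\in\cc_J^\perp$, and your treatment of the real case via $ab^c=n(a)$, merely fill in details the paper leaves as ``clearly'' and ``treated similarly''.
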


\begin{proof}
Assume $x_0\not\in\rr$ and let $a:=f'_c(x_0)$ and $b:=f'_s(x_0)$.

First suppose $b\neq 0, ab^{-1}\not\in\cc_J^\perp$. By Proposition~\ref{prop:differential}, $M^\B_{\B(a,b)}(df_{x_0})$ is the $8\times8$ identity matrix. Thus,
\[M^\B_\B(df_{x_0})=M^{\B(a,b)}_\B(id)\,M^\B_{\B(a,b)}(df_{x_0})=M^{\B(a,b)}_\B(id)\,,\]
which is the first formula in the statement. The second formula now follows from Lemma~\ref{lemma:jacobian}.

Now suppose either $b=0$ or $ab^{-1}\in\cc_J^\perp$. In both cases, clearly $|b|^4(\re^2(ab^{c})+\re^2(J(ab^{c})))=0$. Moreover, in both cases $\det(df_{x_0})=0$ by Theorem~\ref{thm:rankofdifferential}.

The case $x_0\in\rr$ can be treated similarly.
\end{proof}

\begin{corollary}\label{cor:system}
Let $f:\OO\to\oo$ be a slice regular function. If $x_0\in\OO_J\setminus\rr$, then $\det(df_{x_0})=0$ if, and only if, $\langle f'_c(x_0)f'_s(x_0)^{c},1\rangle=0$ and $\langle f'_c(x_0)f'_s(x_0)^{c},J\rangle=0$.
\end{corollary}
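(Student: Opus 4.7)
The plan is to derive the corollary essentially as an algebraic restatement of the explicit Jacobian formula already established in Theorem~\ref{thm:jacobian}. Since $x_0 \in \OO_J\setminus\rr$, that theorem gives the factorization
\[
\det(df_{x_0}) = |f'_s(x_0)|^4\bigl(\re^2(f'_c(x_0)f'_s(x_0)^c) + \re^2(J(f'_c(x_0)f'_s(x_0)^c))\bigr),
\]
valid in every subcase. So vanishing of the Jacobian is equivalent to vanishing of either $f'_s(x_0)$ or both of the two real parts on the right.

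The next step is to convert the two real parts into the scalar products appearing in the statement. Clearly $\re(w) = \langle w, 1\rangle$ for any $w \in \oo$. For the second term, I will apply Lemma~\ref{lem:scalarproduct} with $a=1$ and $b=w$: this gives $\langle w, v\rangle = \langle Jw, Jv\rangle$ for every $v \in \cc_J$. Taking $v = J$ and using $J^2 = -1$, one obtains $\langle w, J\rangle = -\langle Jw, 1\rangle = -\re(Jw)$. Setting $w := f'_c(x_0)f'_s(x_0)^c$, the pair of equalities $\re^2(w) = \langle w, 1\rangle^2$ and $\re^2(Jw) = \langle w, J\rangle^2$ shows that the second factor in the Jacobian formula vanishes if and only if both $\langle f'_c(x_0)f'_s(x_0)^c,1\rangle = 0$ and $\langle f'_c(x_0)f'_s(x_0)^c,J\rangle = 0$.

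It remains to deal with the factor $|f'_s(x_0)|^4$. If $f'_s(x_0) = 0$ then $f'_c(x_0)f'_s(x_0)^c = 0$, and both scalar products in the statement vanish tautologically; conversely, if $f'_s(x_0) \ne 0$ the $|f'_s(x_0)|^4$ factor plays no role and we are reduced to the analysis of the previous paragraph. In either direction, $\det(df_{x_0})=0$ is equivalent to the simultaneous vanishing of $\langle f'_c(x_0)f'_s(x_0)^c,1\rangle$ and $\langle f'_c(x_0)f'_s(x_0)^c,J\rangle$, which is exactly the claim.

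There is essentially no obstacle here; the only minor care needed is the sign bookkeeping in translating $\re(Jw)$ into $\langle w,J\rangle$ via Lemma~\ref{lem:scalarproduct}, and the observation that the degenerate case $f'_s(x_0) = 0$ is consistent with the stated equivalence rather than an exception to it.
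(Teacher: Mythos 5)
Your proposal is correct and follows exactly the route the paper intends: the corollary is stated as an immediate consequence of the Jacobian formula in Theorem~\ref{thm:jacobian} (valid in all subcases at $x_0\in\OO_J\setminus\rr$), combined with the identifications $\re(w)=\langle w,1\rangle$ and $\re(Jw)=-\langle w,J\rangle$ already used in the proof of Lemma~\ref{lemma:jacobian}. Your handling of the degenerate case $f'_s(x_0)=0$ (where $f'_c(x_0)f'_s(x_0)^c=0$ makes both scalar products vanish alongside the determinant) and the sign bookkeeping via Lemma~\ref{lem:scalarproduct} are both accurate, so there is nothing to add.
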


%%%%%%%%%%%%%%%%%%%%%%%

\section{Almost-complex structures induced by octonionic slice regular functions}\label{sec:inducedacs}

The results of Section~\ref{sec:constantoacs} allow us to explore whether slice regular functions are holomorphic with respect to appropriate almost-complex structures. In this section, we study holomorphy on the tangent space at a single point $x_0$. We will further this study in Section~\ref{sec:branchedcoverings}.

\begin{theorem}\label{thm:inducedacs}
Let $f:\OO\to\oo$ be a slice regular function and fix $x_0\in\OO_J\setminus\rr$. If $f'_s(x_0)\neq0$ and $f'_c(x_0)f'_s(x_0)^{-1}\not\in\cc_J^\perp$, then setting
\[\mc{J}_0(p+q):= Jp + (J(qf'_s(x_0)^{-1}))f'_s(x_0)\]
for all $p\in\cc_J\,f'_c(x_0)$ and all $q\in\cc_J^\perp\,f'_s(x_0)$ defines a constant complex structure on 
\[\oo=\cc_J\,f'_c(x_0)\oplus\cc_J^\perp\,f'_s(x_0)\]
such that 
\[\mc{J}_0\circ df_{x_0}=df_{x_0}\circ\J_{x_0}\,.\]
The structure $\mc{J}_0$ coincides with the constant orthogonal complex structure $R_{f'_s(x_0)}\circ L_J\circ R_{f'_s(x_0)^{-1}}$ if, and only if, $(x_0,f'_c(x_0),f'_s(x_0))=0$. It coincides with the constant orthogonal complex structure $L_J$ if, and only if, $f'_s(x_0)\in\cc_J$.
\end{theorem}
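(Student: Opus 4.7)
Set $a:=f'_c(x_0)$, $b:=f'_s(x_0)$ and, by possibly replacing $J$ with $-J$, assume $x_0\in\cc_J^+$, so that $\J_{x_0}$ acts on $T_{x_0}\OO=\oo$ as $L_J$. The decomposition $\oo=\cc_Ja\oplus\cc_J^\perp b$ is Theorem~\ref{thm:rankofdifferential}(2)(d), so the formula in the statement defines a linear map $\mathcal{J}_0:\oo\to\oo$. To see that $\mathcal{J}_0$ is a complex structure, I first check it preserves each summand: for $p=va\in\cc_Ja$, Artin's theorem applied to the associative subalgebra generated by $J$ and $a$ yields $Jp=(Jv)a\in\cc_Ja$; for $q=db\in\cc_J^\perp b$ with $d\in\cc_J^\perp$, Artin gives $qb^{-1}=d$, while orthogonality of $L_J$ forces $Jd\in\cc_J^\perp$, so $\mathcal{J}_0(q)=(Jd)b\in\cc_J^\perp b$. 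Iterating on each summand, $\mathcal{J}_0^2=-\mr{id}$ follows from $L_J\circ L_J=-\mr{id}$ and Artin.

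The identity $\mathcal{J}_0\circ df_{x_0}=df_{x_0}\circ\J_{x_0}$ is then a direct application of Proposition~\ref{prop:differential}: $df_{x_0}(v+w)=va+wb$ for $v\in\cc_J$, $w\in\cc_J^\perp$, so $\mathcal{J}_0(va+wb)=J(va)+(Jw)b=(Jv)a+(Jw)b$ by Artin, matching $df_{x_0}(\J_{x_0}(v+w))=df_{x_0}(Jv+Jw)=(Jv)a+(Jw)b$.

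For the identification with $R_b\circ L_J\circ R_{b^{-1}}$, I compare the two linear maps on each summand. On $\cc_J^\perp b$ both send $db$ to $(Jd)b$ (noting $(db)b^{-1}=d$), so they agree unconditionally. On $\cc_Ja$, $\mathcal{J}_0(va)=(Jv)a$, while by Proposition~\ref{prop:compositionR} the map $R_b\circ L_J\circ R_{b^{-1}}$ acts as $L_J$ on the subalgebra $V$ generated by $J,b$ and as $L_{bJb^{-1}}$ on $V^\perp$. Writing $a=a_V+a_{V^\perp}$ and using $\cc_J\subseteq V$ with $\cc_J\cdot V^\perp\subseteq V^\perp$, the condition reduces to $(bJb^{-1})(va_{V^\perp})=(Jv)a_{V^\perp}$ for all $v\in\cc_J$. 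If $b\in\cc_J$, then $V=\cc_J$ forces $a_{V^\perp}=0$ automatically (and $bJb^{-1}=J$); if $b\notin\cc_J$, setting $v=1$ and exploiting injectivity of $L_{bJb^{-1}-J}$ on the division algebra $\oo$ (since $bJb^{-1}\neq J$) forces $a_{V^\perp}=0$, i.e., $a\in V$. To rephrase this intrinsically, pick $\ell'\in V^\perp\cap\s$ and use the Cayley-Dickson realization $\oo=V+\ell'V$; a routine computation yields the associator identity $(J,\ell'\tilde a,b)=\ell'[J,b]\tilde a$, which vanishes only when $\tilde a=0$ (as $[J,b]\neq0$ for $b\notin\cc_J$). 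Combined with $(J,a_V,b)=0$ (since $J,a_V,b\in V$), this gives $a\in V\iff(J,a,b)=0$; and $(x_0,a,b)=\beta(J,a,b)$ with $\beta\neq0$ translates this into $(x_0,a,b)=0$.

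For $\mathcal{J}_0=L_J$, the two maps agree on $\cc_Ja$, so equality reduces to $(Jd)b=J(db)$, i.e., $(J,d,b)=0$, for every $d\in\cc_J^\perp$. If $b\in\cc_J$ this is immediate from alternativity ($(J,\cdot,1)=(J,\cdot,J)=0$); if $b\notin\cc_J$, the same Cayley-Dickson formula applied to $d=\ell'\in V^\perp\cap\s\subseteq\cc_J^\perp$ gives $(J,\ell',b)=\ell'[J,b]\neq0$. The main obstacle is the third paragraph: combining the decomposition of $R_b\circ L_J\circ R_{b^{-1}}$ from Proposition~\ref{prop:compositionR} with the Cayley-Dickson associator identity to convert the geometric condition $a\in V$ into the intrinsic condition $(x_0,a,b)=0$.
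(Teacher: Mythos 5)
Your proof is correct in substance, and for the key characterization it follows a genuinely different route from the paper's. The first two paragraphs (well-definedness, $\mc{J}_0\circ\mc{J}_0=-\mathrm{id}$, and $\mc{J}_0\circ df_{x_0}=df_{x_0}\circ\J_{x_0}$) match the paper's computation, except that the paper obtains the complex-structure property for free from $\mc{J}_0=df_{x_0}\circ\J_{x_0}\circ df_{x_0}^{-1}$ while you verify it summand by summand; your normalization $J=\im(x_0)/|\im(x_0)|$ just makes explicit the convention the paper uses silently. For the equivalence $\mc{J}_0=R_b\circ L_J\circ R_{b^{-1}}\Leftrightarrow(x_0,a,b)=0$ (with $a=f'_c(x_0)$, $b=f'_s(x_0)$), the paper compares the defining formulas directly and states in one line that $Jp=(J(pb^{-1}))b$ for all $p\in\cc_J a$ if and only if $(J,a,b)=0$; you instead use the $V\oplus V^\perp$ description of $R_b\circ L_J\circ R_{b^{-1}}$ from Proposition~\ref{prop:compositionR} together with the Cayley--Dickson identity $(J,\ell'\tilde a,b)=\ell'\big((Jb-bJ)\tilde a\big)$, which simultaneously converts ``$a\in V$'' into $(J,a,b)=0$ and settles the $L_J$ characterization (where the paper performs essentially the same Cayley--Dickson computation after normalizing $J=\ell$). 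Your route is longer but supplies details the paper leaves implicit, also recording the reduction $(x_0,a,b)=\beta(J,a,b)$, and the associator identity gives a single mechanism for both ``if and only if'' statements.

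One local slip: in the subcase $b\in\cc_J$ you claim that ``$V=\cc_J$ forces $a_{V^\perp}=0$ automatically''. That is false -- $a$ may well have a nonzero component orthogonal to $\cc_J$ -- and in this degenerate case your intermediate equivalences (agreement on $\cc_J a$ iff $a\in V$, and $a\in V$ iff $(J,a,b)=0$) both break down, because $V$ is only $2$-dimensional and Proposition~\ref{prop:compositionR} provides no $V^\perp$ splitting there. The repair is immediate and does not affect the theorem: when $b\in\cc_J$, Proposition~\ref{prop:compositionR} gives $R_b\circ L_J\circ R_{b^{-1}}=L_J$, alternativity gives $(Jv)c=J(vc)$ for all $v\in\cc_J$ and $c\in\oo$, so $\mc{J}_0=R_b\circ L_J\circ R_{b^{-1}}$ holds, and $(x_0,a,b)=0$ holds as well because $x_0,a,b$ all lie in the subalgebra generated by $J$ and $a$, which is associative by Artin's theorem; hence both sides of the equivalence are trivially true, and your main argument should simply be restricted to $b\notin\cc_J$, where it is correct as written.
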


\begin{proof}
Let $a:=f'_c(x_0)$ and $b:=f'_s(x_0)$, so that $df_{x_0}(v+w)=va+wb$. We already proved that $b\neq 0, ab^{-1}\not\in\cc_J^\perp$ imply $\cc_Ja\oplus\cc_J^\perp b=\oo$ (the sum being direct, though not necessarily orthogonal). Thus, setting $\mc{J}_0(p+q)=Jp+(J(qb^{-1}))b$ for all $p\in\cc_J\,a$ and all $q\in\cc_J^\perp\,b$ leads to a well-defined endomorphism of $\oo$.

For all $v\in\cc_J$ and all $w\in\cc_J^\perp$, we have
\[(\mc{J}_0\circ df_{x_0})(v+w)=\mc{J}_0(va+wb)=J(va)+ (J(wbb^{-1}))b=(Jv)a+(Jw)b\]
where we have taken into account Artin's theorem. This formula coincides with
\[(df_{x_0}\circ\J_{x_0})(v+w)=df_{x_0}(Jv+Jw)=(Jv)a+(Jw)b\,.\]
The fact that $\mc{J}_0=df_{x_0}\circ\J_{x_0}\circ df_{x_0}^{-1}$, where $\J_{x_0}$ is a complex structure on $\oo$, immediately implies that $\mc{J}_0$ is a complex structure on $\oo$.

Finally, the equality $\mc{J}_0(p+q)=(J(pb^{-1}))b+(J(qb^{-1}))b$ is equivalent to $Jp=(J(pb^{-1}))b$. This happens for all $p\in\cc_Ja$  if, and only if, $(J,a,b)=0$. On the other hand, the equality $\mc{J}_0(p+q)=Jp+Jq$ is equivalent to $(J(qb^{-1}))b=Jq$. We claim that this happens for all $q\in\cc_J^\perp b$ if, and only if, $b\in\cc_J$.

To prove our claim, without lost of generality we can assume $J=\ell$ and prove the following assertion: the equality $(\ell w)b=\ell(wb)$ holds for all $w\in\hh$ if, and only if, $b\in\cc_\ell$. Suppose $b=b_1+\ell b_2$ with $b_1,b_2\in\hh$. Then
\begin{align*}
&(\ell w)b=(\ell w)(b_1+\ell b_2)=-b_2w^c+\ell(b_1w)\,,\\
&\ell(wb)=\ell (w(b_1+\ell b_2))=\ell (wb_1+\ell(w^cb_2))=-w^cb_2+\ell(wb_1)\,.
\end{align*}
It holds that $b_2w^c=w^cb_2$ and $b_1w=wb_1$ for all $w\in\hh$ if, and only if, $b_1,b_2\in\rr$. We immediately derive our thesis: $(\ell w)b=\ell(wb)$ holds for all $w\in\hh$ if, and only if, $b\in\cc_\ell$.
\end{proof}

When the push-forward of $\J$ via $f_{|_{\OO\setminus\rr}}$ is well-defined, it coincides at each point $f(x_0)$ with the structure defined in Theorem~\ref{thm:inducedacs}. We will determine when the push-forward is well-defined in Section~\ref{sec:branchedcoverings}. We can characterize orthogonality as follows.

\begin{theorem}\label{thm:inducedoacs}
Let $f:\OO\to\oo$ be a slice regular function and fix $x_0\in\OO_J\setminus\rr$. Suppose $f'_s(x_0)\neq0$ and $f'_c(x_0)f'_s(x_0)^{-1}\not\in\cc_J^\perp$. The structure $\mc{J}_0$ defined in Theorem~\ref{thm:inducedacs} is orthogonal if, and only if, $(x_0,f'_c(x_0),f'_s(x_0))=0$.
\end{theorem}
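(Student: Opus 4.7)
Set $a := f'_c(x_0)$, $b := f'_s(x_0)$, and $A := R_b \circ L_J \circ R_{b^{-1}}$, so that the hypotheses of Theorem~\ref{thm:inducedacs} are in force. Sufficiency is free: if $(x_0, a, b) = 0$ then Theorem~\ref{thm:inducedacs} gives $\mc{J}_0 = A$, and Proposition~\ref{prop:compositionR} places $A$ in $Z_4$, making $\mc{J}_0$ orthogonal. The whole content of the statement lies in the converse, and my plan is to prove $\mc{J}_0 = A$ from the orthogonality of $\mc{J}_0$ and then appeal again to Theorem~\ref{thm:inducedacs} to read off $(x_0,a,b)=0$.

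By construction $\mc{J}_0$ and $A$ already coincide on the $6$-dimensional subspace $\cc_J^\perp b$, since both send $q \in \cc_J^\perp b$ to $(J(qb^{-1}))b$. Any constant orthogonal complex structure on Euclidean space is skew-symmetric, because $J_0^T J_0 = I$ together with $J_0^2 = -I$ forces $J_0^T = -J_0$. Hence if $\mc{J}_0$ is orthogonal, both $\mc{J}_0$ and $A$ are skew-symmetric, and so is $B := \mc{J}_0 - A$. For any skew-symmetric endomorphism $\mathrm{image}(B) = (\ker B)^\perp$, so $\ker B \supseteq \cc_J^\perp b$ forces $\mathrm{image}(B) \subseteq (\cc_J^\perp b)^\perp = \cc_J b$. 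In particular, $B(a) \in \cc_J b$.

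Next I would compute $B(a)$ explicitly and translate the constraint. Artin's Theorem applied to $\{Ja, b\}$ gives $((Ja)b^{-1})b = Ja$, so $(J(ab^{-1}))b = ((Ja)b^{-1} - (J, a, b^{-1}))\,b = Ja - (J, a, b^{-1})\,b$, whence $B(a) = Ja - (J(ab^{-1}))b = (J, a, b^{-1})\,b$. Using $b^{-1} = (t(b) - b)/|b|^2$ together with $(J, a, 1) = 0$, one gets $(J, a, b^{-1}) = -|b|^{-2} (J, a, b)$, so $B(a) = -|b|^{-2} (J, a, b)\,b$. Since $\oo$ is a division algebra, the condition $(J, a, b)\,b \in \cc_J b$ is equivalent to $(J, a, b) \in \cc_J = \rr + \rr J$.

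The main obstacle is sharpening $(J, a, b) \in \cc_J$ to $(J, a, b) = 0$. The Preliminaries provide $\re((J, a, b)) = 0$, killing the $\rr$-component; I would then verify $\langle (J, a, b), J\rangle = -\re((J, a, b)\,J) = 0$ by hand, exploiting that $\re((x,y,z)) = 0$ (so $\re$ is insensitive to associators, and permits cyclic shifts in triple products) together with the general identity $\re(xy) = \re(yx)$. Concretely, $\re(((Ja)b)J) = \re((Ja)(bJ)) = \re((bJ)(Ja)) = \re(b \cdot J(Ja)) = -\re(ba) = -\re(ab)$ and $\re((J(ab))J) = \re(J \cdot J(ab)) = -\re(ab)$, whose difference vanishes. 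Hence $(J, a, b) \in \cc_J$ has vanishing $\rr$- and $J$-components and is therefore zero. Since $x_0 = \alpha + \beta J$ with $\alpha, \beta \in \rr$ and $(1, a, b) = 0$, we conclude $(x_0, a, b) = \alpha (1, a, b) + \beta (J, a, b) = 0$, which closes the converse.
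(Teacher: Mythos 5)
Your proof is correct, and it takes a genuinely different route from the paper's. The paper conjugates by $R_{b}$ (with $b:=f'_s(x_0)$), writes $F:=R_{b^{-1}}\circ\mc{J}_0\circ R_b$ in a distinguished splitting basis, and computes its matrix explicitly as a block lower-triangular matrix with $H$-blocks on the diagonal and off-diagonal blocks of the form $(BH-HB)A^{-1}$; orthogonality of that matrix forces the off-diagonal blocks to vanish, i.e.\ $F=L_J$, i.e.\ $\mc{J}_0=R_b\circ L_J\circ R_{b^{-1}}$, and then Theorem~\ref{thm:inducedacs} converts this into the associator condition. You avoid all coordinates: since any orthogonal complex structure is skew-symmetric, and $\mc{J}_0$ agrees with $R_b\circ L_J\circ R_{b^{-1}}$ on the $6$-dimensional subspace $\cc_J^\perp b$, the difference $B$ is skew with $\cc_J^\perp b\subseteq\ker B$, so its image lies in $(\cc_J^\perp b)^\perp=\cc_J b$ (this uses that $R_b$ is conformal, Theorem~\ref{thm:LandR}, a step worth stating explicitly); the identity $B(a)=-|b|^{-2}(J,a,b)\,b$ then converts orthogonality into $(J,a,b)\in\cc_J$, and your unconditional computation that $(J,a,b)$ is orthogonal to both $1$ (the paper's stated property $\re((x,y,z))=0$) and $J$ (the trace manipulation via $\re(xy)=\re(yx)$ and alternativity) forces $(J,a,b)=0$, hence $(x_0,a,b)=0$ by trilinearity. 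What each approach buys: the paper's matrix computation recycles machinery already set up for Lemma~\ref{lemma:jacobian} and exhibits $\mc{J}_0$ concretely, while your argument is basis-free, shorter in spirit, isolates the clean algebraic fact that the associator $(J,a,b)$ always lies in $\cc_J^\perp$, and delivers the associator condition directly without first proving $\mc{J}_0=R_b\circ L_J\circ R_{b^{-1}}$.
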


\begin{proof}
Let $a:=f'_c(x_0)$ and $b:=f'_s(x_0)$, so that $\mc{J}_0(p+q)=Jp+(J(qb^{-1}))b$ for all $p\in\cc_J\,a$ and all $q\in\cc_J^\perp\,b$. According to Theorem~\ref{thm:inducedacs}, we have to prove that $\mc{J}_0$ is orthogonal if, and only if, it coincides with $R_{b}\circ L_J\circ R_{b^{-1}}$. By Theorem~\ref{thm:LandR}, this is the same as proving that the transformation $F:=R_{b^{-1}}\circ\mc{J}_0\circ R_b$ is orthogonal if, and only if, $F=L_J$. We note that, for all $u\in(\cc_Ja)b^{-1}$ and all $w\in\cc_J^\perp$,
\[F(u+w)=(J(ub))b^{-1}+Jw\,.\]
If we fix any distinguished splitting basis 
\[\B=\{1,J,J_1,JJ_1,J_2,JJ_2,J_3,JJ_3\}\]
of $\oo$ and we consider the basis 
\[\C:=\{ab^{-1},(Ja)b^{-1},J_1,JJ_1,J_2,JJ_2,J_3,JJ_3\}\,,\]
then we have
\[M^\C_\C(F)=\left(\begin{array}{cccc}
H & 0 & 0 & 0\\
0 & H & 0 & 0\\
0 & 0 & H & 0\\
0 & 0 & 0 & H
\end{array}
\right)\,,\]
where $H:=\left(\begin{array}{rr}
0 & -1\\
1 & 0
\end{array}
\right)$. We already computed
\[M^\C_\B(id)=
\left(\begin{array}{cccc}
A & 0 & 0 & 0\\
B & I_2 & 0 & 0\\
C & 0 & I_2 & 0\\
D & 0 & 0 & I_2
\end{array}
\right),\quad
\left(\begin{array}{cccc}
A\\
B\\
C\\
D
\end{array}
\right):=\left(\begin{array}{rr}
(ab^{-1})_0 & ((Ja)b^{-1})_0\\
(ab^{-1})_1 & ((Ja)b^{-1})_1\\
(ab^{-1})_2 & ((Ja)b^{-1})_2\\
(ab^{-1})_3 & ((Ja)b^{-1})_3\\\
(ab^{-1})_4 & ((Ja)b^{-1})_4\\
(ab^{-1})_5 & ((Ja)b^{-1})_5\\\
(ab^{-1})_6 & ((Ja)b^{-1})_6\\
(ab^{-1})_7 & ((Ja)b^{-1})_7
\end{array}
\right)\,.\]
Since we already proved that
\[A=\left(\begin{array}{cc}
\re(ab^{-1}) & \re(J(ab^{-1}))\\
-\re(J(ab^{-1})) & \re(ab^{-1})
\end{array}
\right)\,,\]
the matrix $A$ is invertible and it commutes with $H$. By direct computation,
\begin{align*}
M^\B_\B(F)&=M^\C_\B(id)\,M^\C_\C(F)M^\B_\C(id)\\
&=
\left(\begin{array}{cccc}
A & 0 & 0 & 0\\
B & I_2 & 0 & 0\\
C & 0 & I_2 & 0\\
D & 0 & 0 & I_2
\end{array}
\right)
\left(\begin{array}{cccc}
H & 0 & 0 & 0\\
0 & H & 0 & 0\\
0 & 0 & H & 0\\
0 & 0 & 0 & H
\end{array}
\right)
\left(\begin{array}{cccc}
A^{-1} & 0 & 0 & 0\\
-BA^{-1} & I_2 & 0 & 0\\
-CA^{-1} & 0 & I_2 & 0\\
-DA^{-1} & 0 & 0 & I_2
\end{array}
\right)\\
&=\left(\begin{array}{cccc}
AH & 0 & 0 & 0\\
BH & H & 0 & 0\\
CH & 0 & H & 0\\
DH & 0 & 0 & H
\end{array}
\right)
\left(\begin{array}{cccc}
A^{-1} & 0 & 0 & 0\\
-BA^{-1} & I_2 & 0 & 0\\
-CA^{-1} & 0 & I_2 & 0\\
-DA^{-1} & 0 & 0 & I_2
\end{array}
\right)\\
&=\left(\begin{array}{cccc}
H & 0 & 0 & 0\\
(BH-HB)A^{-1} & H & 0 & 0\\
(CH-HC)A^{-1} & 0 & H & 0\\
(DH-HD)A^{-1} & 0 & 0 & H
\end{array}
\right)\,.
\end{align*}
Visibly, $M^\B_\B(F)$ is an orthogonal matrix if, and only if, it coincides with
\[\left(\begin{array}{cccc}
H & 0 & 0 & 0\\
0 & H & 0 & 0\\
0 & 0 & H & 0\\
0 & 0 & 0 & H
\end{array}
\right)=M^\B_\B(L_J)\,,\]
as desired.
\end{proof}

The necessary and sufficient condition for orthogonality is fulfilled by a nontrivial class of functions.

\begin{remark}\label{rmk:oneslicepreserving}
Let $f:\OO\to\oo$ be a slice regular function. If there exists $J\in\s$ such that $f(\OO_J)\subseteq\cc_J$, then $(x_0,f'_c(x_0),f'_s(x_0))=0$ for all $x_0\in\OO\setminus\rr$. Indeed, under this hypothesis, for all $x_0\in\OO$ the spherical derivative $f'_s(x_0)$ belongs to $\cc_J$ and the slice derivative $f'_c(x_0)$ belongs to the associative subalgebra of $\oo$ generated by $x_0$ and $\cc_J$.
\end{remark}

%%%%%%%%%%%%%%%%%%%%%%%

\section{Singular sets and quasi-openness}\label{sec:singularsets}

This section studies the singular set $N_f$ of any slice regular function $f$ and proves the Quasi-open Mapping Theorem for slice regular functions.

In Corollary~\ref{cor:singularset}, we saw that $N_f$ includes the degenerate set $D_f$ and we determined $N_f\setminus D_f$ by means of equality~\eqref{eq:rank6}. By definition, $N_f$ is a real analytic subset of $\OO$. We recall that, for any open $U\subseteq\oo$, a \emph{real analytic subset of $U$} is a set of the form $\psi^{-1}(0)$ for some real analytic function $\psi:U\to\rr$. In particular, a real analytic subset of $U$ is a closed subset of $U$. After recalling the next definition and stating a technical lemma, we can add the subsequent properties of $N_f$.

\begin{definition}
A slice regular function $f$ on $\OO$ is called \emph{slice constant} if $f_{|_{\OO_J}}$ is locally constant for each $J\in\s$. The subset of $\mc{SR}(\OO)$ that comprises slice constant functions is denoted by $\mc{SC}(\OO)$.
\end{definition}

\begin{lemma}\label{lem:partialderivatives}
Let $f:\OO\to\oo$ be a slice regular function and fix $I\in\s$. It holds that
\begin{align}\label{eq:partialderivatives}
&\frac{\partial \vs f(\alpha+\beta I)}{\partial \alpha}=\vs{(f'_c)}(\alpha+\beta I)\,,\\
&\frac{\partial f'_s(\alpha+\beta I)}{\partial \alpha}=(f'_c)'_s(\alpha+\beta I)\,,\notag\\
&\frac{\partial \vs f(\alpha+\beta I)}{\partial \beta}=-\beta(f'_c)'_s(\alpha+\beta I)\,,\notag\\
&\frac{\partial f'_s(\alpha+\beta I)}{\partial \beta}=-\beta^{-1}f'_s(\alpha+\beta I)+\beta^{-1}\vs{(f'_c)}(\alpha+\beta I)\,.\notag
\end{align}
As a consequence,
\begin{align}\label{eq:partialderivativesofsphericalvalue}
&\frac{\partial \vs f(\alpha+\beta I)}{\partial \alpha} = f'_s(\alpha+\beta I)+\beta \frac{\partial f'_s(\alpha+\beta I)}{\partial \beta}\,,\\
&\frac{\partial \vs f(\alpha+\beta I)}{\partial \beta} =-\beta \frac{\partial f'_s(\alpha+\beta I)}{\partial \alpha}\,.\notag
\end{align}
\end{lemma}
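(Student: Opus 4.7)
The plan is to pull everything back to the stem function $F:D\to\oo_\cc$ that induces $f$ and work there, using only real-variable calculus plus the Cauchy--Riemann equations coming from slice regularity.

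First I would write $F(z)=F_1(z)+\ui F_2(z)$ with $F_1,F_2:D\to\oo$, and view $F_1,F_2$ as functions of the real variables $(\alpha,\beta)$, where $z=\alpha+i\beta$. The stem condition $F(\bar z)=\overline{F(z)}$ forces $F_1$ to be even and $F_2$ to be odd in $\beta$; in particular $F_2(\alpha,0)=0$, so $\beta^{-1}F_2$ extends smoothly across the real axis. From the definitions of spherical value and spherical derivative one computes
\[
\vs f(\alpha+\beta I)=F_1(\alpha,\beta),\qquad f'_s(\alpha+\beta I)=\beta^{-1}F_2(\alpha,\beta),
\]
which are the two basic identities the whole proof rests on.

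Next I would use slice regularity. The condition $\partial F/\partial\bar z\equiv0$ translates into the pair of Cauchy--Riemann identities $\partial_\alpha F_1=\partial_\beta F_2$ and $\partial_\alpha F_2=-\partial_\beta F_1$. In particular $\partial F/\partial z=\partial_\alpha F$, so the slice derivative $f'_c=\I(\partial_\alpha F)$ has stem function $\partial_\alpha F_1+\ui\,\partial_\alpha F_2$. Applying the two basic identities from the previous step to $f'_c$ in place of $f$ yields
\[
\vs{(f'_c)}(\alpha+\beta I)=\partial_\alpha F_1(\alpha,\beta),\qquad (f'_c)'_s(\alpha+\beta I)=\beta^{-1}\partial_\alpha F_2(\alpha,\beta).
\]

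At this point the four displayed equalities in \eqref{eq:partialderivatives} follow by direct differentiation of $F_1$ and $\beta^{-1}F_2$ with respect to $\alpha$ and $\beta$, in each case rewriting $\partial_\beta F_1$ or $\partial_\beta F_2$ using Cauchy--Riemann to replace it by $\mp\,\partial_\alpha F_2$ or $\partial_\alpha F_1$, and then recognizing the result through the identities of the previous two paragraphs. The only mildly delicate step is the fourth one, where differentiating $\beta^{-1}F_2(\alpha,\beta)$ produces the extra term $-\beta^{-2}F_2$, which must be re-read as $-\beta^{-1}f'_s$.

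Finally, the two consequences \eqref{eq:partialderivativesofsphericalvalue} are purely algebraic rearrangements: the first is obtained by multiplying the fourth identity of \eqref{eq:partialderivatives} by $\beta$ and comparing with the first; the second is obtained by comparing the third identity of \eqref{eq:partialderivatives} with the second. I expect the only real obstacle to be keeping track of signs and of the factor $\beta^{-1}$ in the definition of $f'_s$; there is no deeper difficulty, since all the noncommutativity/nonassociativity of $\oo$ is bypassed by working at the level of the stem function, where the components $F_1,F_2$ are just $\oo$-valued functions of two real variables.
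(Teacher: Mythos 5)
Your proposal is correct and is essentially the paper's own argument: both prove \eqref{eq:partialderivatives} by direct differentiation using the Cauchy--Riemann characterization of slice regularity, the paper carrying out the computation with $f(\alpha\pm\beta I)$ on the slice $\cc_I$ via the identity $\partial f/\partial\alpha=-I\,\partial f/\partial\beta=f'_c$, while you perform the same computation at the level of the stem components $F_1=\vs f(\alpha+\beta I)$ and $F_2=\beta f'_s(\alpha+\beta I)$. The final rearrangements yielding \eqref{eq:partialderivativesofsphericalvalue} coincide in both versions.
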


\begin{proof}
Using the definition of $\vs f(\alpha+\beta I),f'_s(\alpha+\beta I)$ and the equality $\frac{\partial f(\alpha+\beta I)}{\partial \alpha}=-I\frac{\partial f(\alpha+\beta I)}{\partial \beta}=f'_c(\alpha+\beta I)$, we can prove our first statement by direct computation:
\begin{align*}
\frac{\partial \vs f(\alpha+\beta I)}{\partial \alpha}&=\frac12 \left(f'_c(\alpha+\beta I)+f'_c(\alpha-\beta I)\right)=\vs{(f'_c)}(\alpha+\beta I)\,,\\
\frac{\partial f'_s(\alpha+\beta I)}{\partial \alpha}&=(2\beta I)^{-1}\left(f'_c(\alpha+\beta I)-f'_c(\alpha-\beta I)\right)=(f'_c)'_s(\alpha+\beta I)\,,\\
\frac{\partial \vs f(\alpha+\beta I)}{\partial \beta}&=\frac12 \left(If'_c(\alpha+\beta I)-If'_c(\alpha-\beta I)\right)=-\beta(f'_c)'_s(\alpha+\beta I)\,,\\
\frac{\partial f'_s(\alpha+\beta I)}{\partial \beta}&=-(2\beta^2 I)^{-1}\left(f(\alpha+\beta I)-f(\alpha-\beta I)\right)+(2\beta I)^{-1}\left(If'_c(\alpha+\beta I)+If'_c(\alpha-\beta I)\right)\\
&=-\beta^{-1}f'_s(\alpha+\beta I)+\beta^{-1}\vs{(f'_c)}(\alpha+\beta I)\,.
\end{align*}
Our second statement immediately follows.
\end{proof}

\begin{proposition}\label{prop:singularset}
Let $\OO$ be either a slice domain or a product domain and let $f:\OO\to\oo$ be a slice regular function.
\begin{enumerate}
\item If $f$ is constant, then $N_f=V(f'_c)=\OO$ and $D_f=\OO\setminus\rr$.
\item If $f$ is slice constant but $f$ is not constant, then $N_f=V(f'_c)=\OO$ while $D_f$ is a circular proper real analytic subset of $\OO\setminus\rr$.
\item If $f$ is not slice constant, then $N_f$ and $V(f'_c)$ are proper real analytic subsets of $\OO$ and $D_f$ is a circular proper real analytic subset of $\OO\setminus\rr$.
\end{enumerate}
In particular, $N_f$ has dimension $8$ if, and only if, $N_f=\OO$, which in turn happens if, and only if, $f$ is slice constant.
\end{proposition}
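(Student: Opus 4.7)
The plan is to split into the three cases, dispatching cases~1 and~2 by direct computation and handling case~3 by a contradiction argument; the product-domain subcase of case~3 will be the main obstacle, and the ``in particular'' clause will follow from real analyticity.

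For case~1, constancy of $f$ immediately gives $f'_c\equiv0$ and $f'_s\equiv0$, whence $V(f'_c)=\OO$, $D_f=\OO\setminus\rr$, and $N_f=\OO$ by Corollary~\ref{cor:singularset}. For case~2, slice constancy makes the holomorphic stem function $F$ locally constant on $D$, forcing $f'_c\equiv0$ and hence (again by Corollary~\ref{cor:singularset}) $V(f'_c)=\OO$ and $N_f=\OO$. The properness of $D_f$ both in case~2 and in case~3 will follow from the same observation: writing $F=F_1+\ui F_2$ one has $f'_s=F_2/\beta$, so $f'_s\equiv0$ would force $F_2\equiv0$; then the Cauchy--Riemann equations for $F$ together with the reality condition would make $F$ globally constant and $f$ constant --- contradicting either the case~2 or the case~3 hypothesis. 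Circularity of $D_f$ follows from $f'_s$ being constant on each $\s_x$, and real analyticity from $D_f=\{|f'_s|^2=0\}$.

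For the remaining claims in case~3, $f$ not slice constant gives $f'_c\not\equiv0$, so $V(f'_c)=\{|f'_c|^2=0\}$ is a proper real analytic subset; and $\det(df_x)$, being the Jacobian of a real analytic map (with the explicit expressions of Theorem~\ref{thm:jacobian}), is real analytic on $\OO$, so $N_f=\{\det(df)=0\}$ is real analytic. For properness of $N_f$ on a slice domain, I would invoke the splitting lemma and the one-variable identity theorem to argue that $f'_c$ cannot vanish identically on the real interval $\OO\cap\rr$ (otherwise each of its holomorphic components would vanish on every slice $\OO_J$), and then pick $x_0\in(\OO\cap\rr)\setminus V(f'_c)$, at which Theorem~\ref{thm:rankofdifferential}(1) makes $df_{x_0}$ invertible.

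The hard part will be properness of $N_f$ on a product domain $\OO$. I plan to argue by contradiction: assume $\det(df_x)\equiv0$, write $F=F_1+\ui F_2$ with $F_1,F_2\colon D\to\oo$, and observe that for $x=\alpha+\beta J$ one has $f'_c(x)=a+Jb$ and $f'_s(x)=c/\beta$ with $a:=\partial_\alpha F_1(\alpha,\beta)$, $b:=\partial_\alpha F_2(\alpha,\beta)$, $c:=F_2(\alpha,\beta)$ all independent of $J$. Corollary~\ref{cor:system} together with Lemma~\ref{lem:scalarproduct} should rewrite the vanishing condition at fixed $(\alpha,\beta)$ and arbitrary $J\in\s$ as
\[
\re(ac^c)=\langle bc^c,J\rangle,\qquad \langle ac^c,J\rangle=-\re(bc^c)\qquad\text{for all }J\in\s.
\]
Since a linear functional on $\im\oo$ that is constant on the sphere $\s$ must vanish, this will force $ac^c=bc^c=0$ at every $(\alpha,\beta)\in D^+$, and the division-algebra property of $\oo$ will then express $D^+$ as the union of the two real analytic subsets $A:=\{F_2=0\}$ and $B:=\{\partial_\alpha F_1=\partial_\alpha F_2=0\}$. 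Connectedness of $D^+$ will force one of them to equal $D^+$, and in either case the Cauchy--Riemann equations for $F$ will make $F$ locally constant on $D^+$, so $f$ will be slice constant --- the desired contradiction. The ``in particular'' clause is then immediate: $N_f=\OO$ is exactly the disjunction of cases~1 and~2, i.e., $f$ is slice constant; and if $\dim N_f=8$, then $\det(df_x)$ vanishes on a nonempty open subset of the connected set $\OO$, hence identically by real analyticity, so $N_f=\OO$.
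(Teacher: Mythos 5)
Your proposal is correct and follows essentially the same route as the paper: the same trichotomy, the same use of Corollary~\ref{cor:system} and Lemma~\ref{lem:scalarproduct} with $J$ varying over $\s$ to force $ac^c=bc^c=0$ in the product-domain case, and the same real-analyticity dichotomy (your $a$, $b$, $c$ are the paper's $\vs{(f'_c)}$, $(f'_c)'_s$, $f'_s$ up to positive factors of $\beta$). The only loosely stated point is that connectedness alone does not split $D^+=A\cup B$: one needs that a proper real analytic subset of the connected open set $D^+$ has empty interior, so that one of $A$, $B$ contains a nonempty open set and hence equals $D^+$ --- but this is the standard argument and parallels the corresponding step in the paper's own proof.
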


\begin{proof}
Since $f'_s:\OO\setminus\rr\to\oo$ is a real analytic function, constant on each $6$-sphere of the form $\s_{x_0}$, either $D_f=V(f'_s)$ is a circular proper real analytic subset of $\OO\setminus\rr$ or $D_f=\OO\setminus\rr$. In the latter case, $f$ coincides with $\vs f$ throughout $\OO$ and Formulas~\eqref{eq:partialderivativesofsphericalvalue} imply that $\vs f$ is constant in $\OO\setminus\rr$. It follows that $f$ is constant in $\OO$.

Now let us consider $V(f'_c)$. By the definition of $f'_c$, its zero set equals $\OO$ if, and only if, $f$ is slice constant. Otherwise, it is a proper real analytic subset of $\OO$.

We are left with proving that if $N_f=\OO$ then $f'_c$ or $f'_s$ vanish identically. This can be argued using different techniques for slice domains and product domains.
\begin{itemize}
\item Suppose $\OO$ is a slice domain and $N_f=\OO$. Then, by Theorem~\ref{thm:jacobian}, $f'_c(x_0)=0$ for all $x_0\in\OO\cap\rr$. As a consequence, $f'_c$ vanishes identically in $\OO$.
\item Suppose $\OO$ is a product domain and $N_f=\OO$. According to Corollary~\ref{cor:system}, for all $x_0\in\OO$ and all $J\in\s$ it holds that
\[\left\{
\begin{array}{l}
0=\langle f'_c(x_0)f'_s(x_0)^{c},1\rangle=\langle a_1b^c+(Ja_2)b^c,1\rangle\\
0=\langle f'_c(x_0)f'_s(x_0)^{c},J\rangle=\langle a_1b^c+(Ja_2)b^c,J\rangle
\end{array}
\right.\,,\]
where we have set $a_1:=\vs{(f'_c)}(x_0), a_2:=(f'_c)'_s(x_0), b:=f'_s(x_0)$. By Lemma~\ref{lem:scalarproduct}, the previous system is equivalent to
\[\left\{
\begin{array}{l}
\re(a_1b^c)-\langle \im(a_2b^c),J\rangle=0\\
\langle\im(a_1b^c),J\rangle +\re(a_2b^c)=0
\end{array}
\right.\,.
\]
Since $J\in\s$ is arbitrary, it follows that $a_1b^c=0=a_2b^c$. As a consequence of this reasoning, either $f'_s$ vanishes identically or $f'_c$ vanishes in a non-empty open subset of $\OO$, whence throughout $\OO$.\qedhere
\end{itemize}
\end{proof}

The next example shows that $N_f$ may well have dimension $7$.

\begin{example}\label{ex:square}
Consider the octonionic polynomial $f(x)=x^2$. Its slice derivative $f'_c(x)=2x$ vanishes exactly at $0$ and its spherical derivative $f'_s(x)=t(x)$ vanishes exactly in $\im(\oo)\setminus\{0\}$, while for $\alpha\neq0$
\[f'_c(\alpha+\beta J)f'_s(\alpha+\beta J)^{-1}=2(\alpha+\beta J)(2\alpha)^{-1}=1+\beta\alpha^{-1}J\]
never belongs to $\cc_J^\perp$. Thus,
\[N_f=D_f\cup V(f'_c)=\im(\oo)\,.\]
\end{example}

We are now in a position to prove that octonionic slice regular functions are quasi-open, according to theory presented in~\cite{titusyoung}.

\begin{definition}
A map $f:A\to B$ between topological spaces is \emph{quasi-open} if, for each $b\in B$ and for each open neighborhood $U$ of a compact connected component of $f^{-1}(b)$ in $A$, the point $b$ belongs to the interior of $f(U)$.
\end{definition}

\begin{theorem}\label{thm:quasi-open}
Let $\OO$ be either a slice domain or a product domain and let $f:\OO\to\oo$ be a slice regular function. If $f$ is not slice constant, then $f$ is quasi-open.
\end{theorem}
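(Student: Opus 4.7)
The plan is to verify the definition of quasi-openness directly via a Brouwer degree argument, exploiting two facts now available. Because $f$ is not slice constant, Proposition~\ref{prop:singularset} ensures that $N_f$ is a proper real analytic subset of $\OO$, so $\OO\setminus N_f$ is open and dense, and by Theorem~\ref{thm:rankofdifferential} the restriction $f|_{\OO\setminus N_f}$ is a local diffeomorphism; moreover, Theorem~\ref{thm:jacobian} guarantees $\det(df_{x_0})\geq 0$ at every $x_0\in\OO$.

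Fix $b\in\oo$, a compact connected component $C$ of $f^{-1}(b)$, and an open neighborhood $U\subseteq\OO$ of $C$. My first step would be purely topological: since $\OO$ is locally compact Hausdorff and $C$ is a compact connected component of the closed set $f^{-1}(b)$, I would produce an open set $V$ with $C\subseteq V\subseteq U$, $\overline V$ compact in $\OO$, and $\partial V\cap f^{-1}(b)=\emptyset$. Concretely I would shrink $U$ to have compact closure in $\OO$, observe that $C$ is still a connected component of the compact Hausdorff space $X:=f^{-1}(b)\cap\overline U$, use the fact that in a compact Hausdorff space connected components coincide with quasi-components to separate $C$ from the disjoint closed subset $\partial U\cap X$ by a clopen subset $A$ of $X$ contained in $U$, and finally invoke normality of $\OO$ to thicken $A$ into an open subset $V\subseteq U$ whose closure satisfies $\overline V\cap f^{-1}(b)\subseteq A\subseteq V$. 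I expect this topological construction to be the most delicate ingredient of the argument.

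With such a $V$ in hand, set $d:=\mr{dist}(b,f(\partial V))>0$: the Brouwer degree $\deg(f|_{\overline V},V,\cdot)$ is well defined and locally constant on $\oo\setminus f(\partial V)$, hence constant on the connected ball $B(b,d)$. To show this constant value is strictly positive, I would use continuity of $f$ to pick an open neighborhood $V_0\subseteq V$ of $C$ with $f(V_0)\subseteq B(b,d)$, and then density of $\OO\setminus N_f$ to choose a point $x_1\in V_0\setminus N_f$. Because $f|_{V\setminus N_f}$ is a local diffeomorphism, $f(V\setminus N_f)$ is open in $\oo$ and contains $f(x_1)\in B(b,d)$; Sard's theorem applied to $f|_V$ then supplies a regular value $b^*$ of $f|_V$ in $f(V\setminus N_f)\cap B(b,d)$. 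At such a $b^*$ the preimage $f^{-1}(b^*)\cap V$ is a finite nonempty set of regular points of $f|_V$, and Theorem~\ref{thm:jacobian} makes each contribute $+1$ to $\deg(f|_{\overline V},V,b^*)$, giving $\deg(f|_{\overline V},V,b^*)\geq 1$. Consequently $\deg(f|_{\overline V},V,b')\geq 1$ for every $b'\in B(b,d)$, so $f^{-1}(b')\cap V\neq\emptyset$ and $B(b,d)\subseteq f(V)\subseteq f(U)$, yielding $b\in\mr{int}(f(U))$.
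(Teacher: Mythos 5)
Your argument is correct: the two inputs you use are exactly the ones the paper extracts (nonnegativity of $\det(df_{x_0})$ everywhere, from Theorem~\ref{thm:jacobian}, and the fact that $N_f$ is a proper real analytic subset of $\OO$, hence thin with dense complement, from Proposition~\ref{prop:singularset}), and your topological separation step (components equal quasi-components in the compact set $f^{-1}(b)\cap\overline U$, then thickening the clopen piece to an open $V$ with $\partial V\cap f^{-1}(b)=\emptyset$) together with the degree computation (constancy of $\deg(f|_{\overline V},V,\cdot)$ on $B(b,d)$, and positivity at a Sard-regular value $b^*\in f(V\setminus N_f)\cap B(b,d)$, where every preimage contributes $+1$ because the Jacobian is nonnegative and nonzero there) is a complete and valid verification of the definition of quasi-openness. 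Where you differ from the paper is in what happens after the two inputs are in place: the paper's proof is two lines long, simply verifying the hypotheses of the interiority criterion of Titus and Young and citing \cite{titusyoung} (pages 91--92), whereas you do not invoke that reference at all and instead reprove the needed criterion directly via Brouwer degree and Sard's theorem, which the real analyticity of $f$ makes painless. The trade-off is the expected one: the paper's route is shorter and defers the topological and degree-theoretic work to the 1952 reference (stated there under weaker regularity hypotheses), while your route is self-contained, makes the mechanism explicit --- the positivity of the local degree forces $B(b,d)\subseteq f(V)$ --- and as a by-product exhibits the role of the density of regular points, which is exactly what failure of slice constancy guarantees.
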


\begin{proof}
By Theorem~\ref{thm:jacobian}, $\det(df_{x_0})\geq0$ for all $x_0\in\OO$. Moreover, if $f$ is not slice constant then $N_f=\{x_0\in\OO \, | \, \det(df_{x_0})=0\}$ has dimension less than $8$ by the previous proposition. By~\cite[pages 91-92]{titusyoung}, $f$ is quasi-open.
\end{proof}

%%%%%%%%%%%%%%%%%%%%%%%

\section{Fibers and openness}\label{sec:fibers}

In this section, we are able to improve the Open Mapping Theorem for octonionic slice regular functions obtained in~\cite[Theorem 5.7]{gpsdivisionalgebras} (see also~\cite[Theorem 5.4]{wang}). We begin by studying the fibers of slice regular functions.

\begin{theorem}\label{thm:fibers}
Let $\OO$ be either a slice domain or a product domain. Let $f:\OO\to\oo$ be a nonconstant slice regular function and take $c\in f(\OO)$.
\begin{enumerate}
\item If $N(f-c)\not\equiv0$ then $f^{-1}(c)$ consists of isolated points or isolated $6$-spheres of the form $\s_{x_0}$. Moreover, the union of such $6$-spheres is $f^{-1}(c)\cap D_f$.
\item If $N(f-c)\equiv0$ then $f^{-1}(c)$ includes a real analytic subset of $\OO$, namely a $2$-surface $W_{f,c}$, such that 
\[f^{-1}(c)\setminus D_f=W_{f,c}\setminus D_f\,,\]
while $f^{-1}(c)\cap D_f$ is a (possibly empty) union of isolated $6$-spheres of the form $\s_{x_0}$. Clearly, $W_{f,c}\subseteq N_f$.
\end{enumerate}
Case {\it 2} is excluded if $\OO$ is a slice domain.
\end{theorem}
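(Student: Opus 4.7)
The plan is to study the zero set $V(g)$ of $g := f - c \in \mc{SR}(\OO)$, which is not identically zero because $f$ is nonconstant.

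For Part 1, the hypothesis $N(g) \not\equiv 0$ places us directly under the second bullet of Theorem~\ref{structureofzeros}, so $f^{-1}(c) = V(g)$ is a union of isolated points and isolated 6-spheres $\s_{x_0}$. To identify the contribution of the spheres with $f^{-1}(c) \cap D_f$, I would use the representation formula~\eqref{eq:representation}. On the one hand, $\s_{x_0} \subseteq V(g)$ yields $g(x_0) = g(x_0^c) = 0$, so $g'_s(x_0) = f'_s(x_0) = 0$, and by the constancy of $f'_s$ on $6$-spheres the whole $\s_{x_0}$ lies in $D_f$. Conversely, if $x_0 \in V(g) \cap D_f$ with $x_0 \notin \rr$, formula~\eqref{eq:representation} at $x_0$ forces $\vs g(x_0) = 0$; since both $\vs g$ and $g'_s$ are constant on $\s_{x_0}$ and vanish at $x_0$, applying~\eqref{eq:representation} everywhere on $\s_{x_0}$ yields $g \equiv 0$ there.

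For Part 2, the combination $N(g) \equiv 0$ with $g \not\equiv 0$ excludes $\OO$ from being a slice domain by Proposition~\ref{SRnonsingular}, proving the final sentence of the theorem; thus $\OO = \OO_D$ is a product domain with $D = D^+ \sqcup D^-$. Since $\vs f$ and $f'_s$ are constant on each 6-sphere $\s_{\alpha + \beta \cdot}$, they descend to real analytic maps $a, b : D^+ \to \oo$, and for $x = \alpha + \beta J$ the representation formula reads $g(x) = (a - c) + \beta J b$. Translating the two components of $N(g) \equiv 0$ through~\eqref{eq:normalspherical} produces the identities
\[
|a - c|^2 = \beta^2 |b|^2, \qquad \re\bigl((a - c)\, b^c\bigr) = 0.
\]
On the open set $U := D^+ \setminus V(b)$, define the real analytic octonion-valued function $\sigma(\alpha, \beta) := \beta^{-1}(c - a(\alpha,\beta))\, b(\alpha,\beta)^{-1}$; the two identities above are exactly the statements $|\sigma| = 1$ and $\re(\sigma) = 0$, so $\sigma$ takes values in $\s$. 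Set $W_{f,c}$ to be the graph of $\sigma$ embedded in $\OO$ via $(\alpha,\beta) \mapsto \alpha + \beta\, \sigma(\alpha,\beta)$: this map is injective (one recovers $(\alpha,\beta)$ as $(\re(x),|\im(x)|)$) and an immersion (the partials have independent real parts $1$ and $0$ respectively), so $W_{f,c}$ is a real analytic $2$-surface. By construction $f \equiv c$ on $W_{f,c}$, yielding $W_{f,c} \subseteq f^{-1}(c)$ and $W_{f,c} \subseteq N_f$: the differential $df_{x_0}$ annihilates the $2$-dimensional tangent space to $W_{f,c}$ and hence has rank at most $6$. The equality $f^{-1}(c) \setminus D_f = W_{f,c} \setminus D_f$ then reduces to the uniqueness of the $J \in \s$ solving $\beta J b = c - a$ when $b \neq 0$.

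It remains to describe $f^{-1}(c) \cap D_f$. At $(\alpha,\beta) \in V(b)$ one has $g(\alpha + \beta J) = a - c$ independently of $J$, so the sphere $\s_{\alpha + \beta \cdot}$ is either disjoint from $V(g)$ (when $a \neq c$) or entirely contained in it (when $a = c$). But $N(g) \equiv 0$ combined with the last part of Theorem~\ref{zerosonsphere} forces every sphere in $\OO$ to meet $V(g)$, whence $V(b) \subseteq V(a - c)$ and $f^{-1}(c) \cap D_f$ is the union of the full $6$-spheres $\s_{\alpha + \beta \cdot}$ indexed by $V(b)$. The most delicate step I foresee is showing that these spheres are \emph{isolated}, i.e., that $V(b)$ is discrete in $D^+$. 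For this I would invoke the first bullet of Theorem~\ref{structureofzeros}: all but at most one $J \in \s$ yield $V(g) \cap \cc_J^+$ closed and discrete in $\OO_J$, and under the identification $\phi_J$ the set $V(b) \subseteq V(a - c)$ embeds into $V(g) \cap \cc_J^+$, which forces $V(b)$ to be discrete in $D^+$. Without this input $V(b)$ would only be known to be a proper real analytic subset of $D^+$ and could a priori carry a $1$-dimensional arc, which is precisely why this seemingly technical step is the crux of Part 2.
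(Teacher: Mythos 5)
Most of your argument runs parallel to the paper's and is sound: Part~1 via Theorem~\ref{structureofzeros} and Formula~\eqref{eq:representation}; the identities $n(\vs g)=\beta^2 n(g'_s)$ and $\re(\vs g\, (g'_s)^c)=0$ from Formulas~\eqref{eq:normalspherical}; the unique zero $\alpha+\beta\,\sigma(\alpha,\beta)$ on each non-degenerate sphere (the paper gets this by citing~\cite[Theorem~4.1]{gpsalgebra}, you rederive it explicitly); the identification of $f^{-1}(c)\cap D_f$ with the spheres over $V(b)$ and their isolatedness via the first bullet of Theorem~\ref{structureofzeros}; and the exclusion of slice domains via Proposition~\ref{SRnonsingular}. (A small repair: in the immersion argument, having real parts $1$ and $0$ does not by itself give independence; you also need the second partial nonzero, e.g.\ from $\langle \sigma+\beta\,\partial_\beta\sigma,\sigma\rangle=1$ since $|\sigma|\equiv1$.)

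The genuine gap is the assertion, explicitly part of the statement, that $W_{f,c}$ is a \emph{real analytic subset of $\OO$}. In the paper's terminology this means $\psi^{-1}(0)$ for a real analytic $\psi:\OO\to\rr$, hence in particular closed in $\OO$. Your $W_{f,c}$ is the graph of $\sigma$ over $D^+\setminus V(b)$ only; it equals $f^{-1}(c)\setminus D_f$, but whenever $f^{-1}(c)\cap D_f\neq\emptyset$ (i.e.\ $V(b)\neq\emptyset$) its closure in $\OO$ contains limit points lying on the degenerate spheres $\s_{x_0}$ over $V(b)$ which are not in the graph, so the graph is not closed in $\OO$ and is not a real analytic subset of $\OO$; it is one only in the open set obtained by deleting those spheres. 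This is exactly the part to which the paper devotes the technical core of the proof of case~2: at each sphere $\s_{x_0}\subseteq f^{-1}(c)\cap D_f$ one writes $f-c=\Delta_{x_0}^n\cdot h$ with $n\geq1$ the half of the spherical multiplicity and $h\in\mc{SR}(\OO)$ not identically zero on $\s_{x_0}$, observes that $N(h)\equiv0$, hence $h$ has a unique zero $w_0$ in $\s_{x_0}$ and $h'_s(x_0)\neq0$, and then extends the surface analytically through $\s_{x_0}$ as the local zero surface $x\mapsto\re(x)-\vs h(x)\,h'_s(x)^{-1}$ on a circular neighborhood where $h'_s\neq0$. Your proposal never addresses this extension, so as written it proves a weaker statement (a $2$-surface in $\OO$ minus the degenerate spheres); to close the gap you would need an argument of this factorization type (which is also what underlies the later global parametrization $\omega:D^+\to\OO_D$ of the wing, via Formula~\eqref{eq:omega2}).
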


\begin{proof}
Let us set $g:=f-c$.

To prove property {\it 1}, we observe that if $N(g)\not\equiv0$ then $V(g)=f^{-1}(c)$ consists of isolated points or isolated $6$-spheres of the form $\s_x$ by Theorem~\ref{structureofzeros}. Moreover, when $x_0\in V(g)$, the inclusion $\s_{x_0}\subseteq V(g)$ holds if, and only if, $x_0\in D_g=D_f$.

Now let us prove property {\it 2}. If $N(g)\equiv0$ then, by Theorem~\ref{zerosonsphere}, each $6$-sphere $\s_{x_0}\subset \OO$ includes some zero of $g$. If $\s_{x_0}\subseteq V(g)$ then $\s_{x_0}\subseteq D_g=D_f$. Otherwise, by~\cite[Theorem 4.1]{gpsalgebra}, $g'_s(x_0)=f'_s(x_0)\neq0$ and the function $g$ has exactly one zero in $\s_{x_0}$; namely,
\[\re(x_0)-\vs g(x_0)g'_s(x_0)^{-1}=\re(x_0)+(c-\vs f(x_0))f'_s(x_0)^{-1}\,.\]
Thus, $V(g)\setminus D_g=f^{-1}(c)\setminus D_f$ is a real analytic $2$-surface $\Sigma$. Moreover, the circular closed subset $V(g)\cap D_g=f^{-1}(c)\cap D_f$ is a union of isolated $6$-spheres of the form $\s_{x_0}$ because its intersection with $\cc_I^+$ is discrete for some $I\in\s$ (see Theorem~\ref{structureofzeros}). We can now prove that $\Sigma$ extends analytically through each $6$-sphere $\s_{x_0}\subset V(g)\cap D_g=f^{-1}(c)\cap D_f$. Indeed, since $g\not\equiv0$, the spherical multiplicity of $g$ at $\s_{x_0}$ is a finite positive natural number $2n$ and
\[g(x)=\Delta_{x_0}^n(x)h(x)\]
for some $h\in\mc{SR}(\OO)$ that does not vanish identically in $\s_{x_0}$. Since $N(g)$ vanishes identically in $\OO$, so does $N(h)$. In particular, $h$ has a unique zero $w_0$ in $\s_{x_0}$ and $h'_s(x_0)\neq0$. Let $U$ be a circular neighborhood of $\s_{x_0}$ where $h'_s$ never vanishes. Then $h$ vanishes identically on the real analytic $2$-surface patch $\Upsilon$ formed by the points $\re(x)-\vs h(x)h'_s(x)^{-1}$ for $x\in U$. Now, $\Upsilon$ includes both the unique zero $w_0$ of $h$ in $\s_{x_0}$ and the $2$-surface $\Sigma\cap U$. Thus, we have extended $\Sigma$ analytically through $\s_{x_0}$.

Our final remark is the following. If $\OO$ is a slice domain then Proposition~\ref{SRnonsingular} tells us that $N(g)\equiv0$ only when $g\equiv0$. In this case $f\equiv c$, which is excluded by our hypothesis.
\end{proof}

\begin{definition}
Let $\OO$ be either a slice domain or a product domain and let $f:\OO\to\oo$ be a slice regular function. If, for some $c\in f(\OO)$, case {\it 2} of the previous theorem applies, we say that $f$ has a \emph{wing} $W_{f,c}$ and we denote the union of all wings of $f$ by $W_f$. Otherwise, we say that $f$ has no wings and we set $W_f:=\emptyset$.
\end{definition}

The function in Example~\ref{ex:square} has fibers of type {\it 1}, but no wings:

\begin{example}\label{ex:square2}
For $f(x)=x^2$: if $c$ belongs to the real half-line $(-\infty,0)$, then the fiber $f^{-1}(c)$ is the $6$-sphere $\sqrt{-c}\,\s$; the fiber $f^{-1}(0)$ over $0$ is the singleton $\{0\}$; all other fibers $f^{-1}(c)$ consist of two points.
\end{example}

If $f$ is slice constant, then every half-plane $\cc_J^+$ is a wing for $f$. This is the case for the function
\[\eta_{J_0}(x):=\frac12+\frac{\im(x)}{|\im(x)|}\frac{J_0}{2}\]
that appeared in Example~\ref{ex:1fin}. Less trivial examples of wings can be constructed by means of the next remark, as done over quaternions in~\cite[Examples 5.9]{gporientation}.

\begin{remark}\label{rmk:wingproducedbyeta}
Let $\OO$ be a product domain and let $g\in\mc{SR}(\OO)$. Fix $J_0\in\s$ and set $f:=g\cdot\eta_{J_0}$. By~\cite[Theorem 3.1]{gpsalgebra},
\[N(f)=N(g)N(\eta_{J_0})\equiv0\,.\]
Thus, $f$ has a wing $W_{f,0}$.
\end{remark}

\begin{example}\label{ex:1flatwing}
The function $f:\oo\setminus\rr\to\oo$ defined by
\[f(x):=2x\cdot\eta_{-i}(x)=x-x\frac{\im(x)}{|\im(x)|}i\]
has
\[N_f=W_{f,0}=\cc_{-i}^+\,.\]
Indeed, it has a wing $W_{f,0}=\cc_{-i}^+$. Moreover, by direct computation, the following equalities hold for all $\alpha,\beta\in\rr$ with $\beta>0$ and for all $J\in\s$:
\begin{align*}
&f(\alpha+\beta J)=\alpha+\beta i+J(\beta-\alpha i)\,,\\
&f'_s(\alpha+\beta J)=1-\frac{\alpha}{\beta}i\,,\\
&f'_c(\alpha+\beta J)=1-Ji\,,\\
&f'_c(\alpha+\beta J)f'_s(\alpha+\beta J)^c=1+\frac{\alpha}{\beta}(i+J)-Ji\,.
\end{align*}
By Corollary~\ref{cor:system}, $\det(df_{x_0})$ only vanishes when $x_0\in\cc_{-i}^+$.
\end{example}

The union $W_f$ of all wings may well have dimension $7$, as shown in the next example. Here and later in this work, we will use the notation $h^{\punto2}(x)=h(x)^{\punto2}:=h(x)\cdot h(x)$.

\begin{example}
The slice regular function on $\oo\setminus\rr$ defined by 
\[f(x):=x\cdot\eta_{i}(x)-x^{-1}\cdot\eta_{-i}(x)\]
has a wing $W_{f,c}$ for every $c$ in the unit $5$-sphere in $\cc_i^\perp=j\rr+k\rr+\ell\rr+\ell i\rr+\ell j\rr+\ell k\rr$. Indeed,
\begin{align*}
N(f)(x)&=(x\cdot\eta_{i}(x)-x^{-1}\cdot\eta_{-i}(x))\cdot(-x^{-1}\cdot\eta_{i}(x)+x\cdot\eta_{-i}(x))\\
&=-\eta_{i}^{\punto2}(x)-\eta_{-i}^{\punto2}(x)=-\eta_{i}(x)-\eta_{-i}(x)\\
&\equiv-1
\end{align*}
and
\begin{align*}
N(f-c)(x)&=N(f)(x)-f(x)\cdot c^c-c\cdot f^c(x)+n(c)\\
&=n(c)-1-2\langle\vs f(x),c\rangle-2\im(x)\langle f'_s(x),c\rangle\,.
\end{align*}
Thus, $N(f-c)\equiv0$ if, and only if, $\langle\vs f(x),c\rangle\equiv\frac{n(c)-1}{2}$ and $\langle f'_s(x),c\rangle\equiv0$. Noticing that $\vs f,f'_s$ are nonconstant and real analytic functions $\oo\setminus\rr\to\cc_i$, the last two equalities are equivalent to $c\in\cc_i^\perp,n(c)=1$.
\end{example}

As a consequence of Theorem~\ref{thm:fibers}, any slice regular function $f$ can be restricted to fulfill the next definition (see~\cite{titusyoung}).

\begin{definition}
A map $f:A\to B$ between topological spaces is \emph{light} if, for each $b\in B$, $f^{-1}(b)$ is totally disconnected.
\end{definition}

This allows us to prove the announced new version of the Open Mapping Theorem for octonionic slice regular functions. In the statement, $\overline{D}_f$ denotes the closure in $\OO$ of $D_f\subseteq\OO\setminus\rr$.

\begin{theorem}
Let $\OO$ be either a slice domain or a product domain and let $f:\OO\to\oo$ be a slice regular function. If $f$ is not slice constant, then its restriction to
\[\OO\setminus(\overline{D}_f\cup W_f)\]
is an open map. Moreover: if $W_f=\emptyset$, then the image $f(U)$ of any circular open subset $U$ of $\OO$ is open; in particular, $f(\OO)$ is open.
\end{theorem}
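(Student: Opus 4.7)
The plan is to reduce both assertions to the Quasi-open Mapping Theorem (Theorem~\ref{thm:quasi-open}) via the structural description of fibers in Theorem~\ref{thm:fibers}. In both cases the key step is to exhibit the connected component of $x_0$ in $f^{-1}(f(x_0))$ as a compact set sitting inside the open set whose image we wish to open up.

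For part 1, I first verify that $A:=\OO\setminus(\overline{D}_f\cup W_f)$ is open in $\OO$. Since $\overline{D}_f$ is closed, this amounts to showing that $\overline{D}_f\cup W_f$ is closed, i.e., that if $(x_n)_{n\in\nn}\subset W_f$ converges to some $x_0\in\OO$, then $x_0\in\overline{D}_f\cup W_f$. Each $x_n$ lies on the wing $W_{f,f(x_n)}$, so $N(f-f(x_n))\equiv0$ on $\OO$. Because evaluation of $N(f-c)$ at a fixed point of $\OO$ is continuous in $c$, and $f(x_n)\to f(x_0)$ by continuity of $f$, we may pass to the limit to obtain $N(f-f(x_0))\equiv0$. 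By Theorem~\ref{thm:fibers}(2), either $x_0\in D_f\subseteq\overline{D}_f$, or $x_0\in f^{-1}(f(x_0))\setminus D_f=W_{f,f(x_0)}\setminus D_f\subseteq W_f$, as desired.

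With $A$ open, part 1 follows at once: for $x_0\in A$, the conditions $x_0\notin W_f$ and $x_0\notin D_f$ rule out case 2 of Theorem~\ref{thm:fibers} at $c=f(x_0)$ (otherwise $x_0$ would be in $W_{f,f(x_0)}$), so $N(f-f(x_0))\not\equiv0$ and the fiber $f^{-1}(f(x_0))$ is a disjoint union of isolated points and isolated $6$-spheres, with the $6$-spheres contained in $D_f$; since $x_0\notin D_f$, the connected component of $x_0$ in this fiber is the singleton $\{x_0\}$, which is compact. For any open $V\subseteq A$ with $x_0\in V$, Theorem~\ref{thm:quasi-open} now yields $f(x_0)\in\mathrm{int}(f(V))$, so $f|_A$ is open.

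For part 2, assume $W_f=\emptyset$ and let $U\subseteq\OO$ be circular and open. Then Theorem~\ref{thm:fibers}(1) applies for every $c\in f(\OO)$, and the connected component of $x_0\in U$ in its fiber is $\{x_0\}$ (when $x_0\notin D_f$) or $\s_{x_0}$ (when $x_0\in D_f$); both are compact, and by circularity of $U$ both are contained in $U$. Theorem~\ref{thm:quasi-open} then gives $f(x_0)\in\mathrm{int}(f(U))$, proving $f(U)$ open in $\oo$. Since $\OO$ is itself circular (being a slice or product domain), the final claim is the special case $U=\OO$. The main technical hurdle here is the closedness of $\overline{D}_f\cup W_f$, but as sketched above it follows cleanly from continuity of $c\mapsto N(f-c)$ combined with the dichotomy in Theorem~\ref{thm:fibers}.
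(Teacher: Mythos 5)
Your proof is correct, and its second half is essentially the paper's own argument: both treat the case $W_f=\emptyset$ by noting that, thanks to Theorem~\ref{thm:fibers}, the connected component of any point of the fiber is a point or a $6$-sphere $\s_{x_0}$, hence compact and (by circularity) contained in $U$, and then invoking Theorem~\ref{thm:quasi-open}. For the first assertion, however, you take a genuinely different route. The paper never looks at fiber components there: it observes that $f$ restricted to $\OO':=\OO\setminus(\overline{D}_f\cup W_f)$ is light (again by Theorem~\ref{thm:fibers}), real analytic, with $\det(df_{x_0})\geq0$ by Theorem~\ref{thm:jacobian}, and then applies the interiority theorem of Titus--Young \cite{titusyoung} directly. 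You instead reuse quasi-openness: for $x_0\in\OO'$ you correctly rule out case \emph{2} of Theorem~\ref{thm:fibers} at $c=f(x_0)$, so the component of $x_0$ in its fiber is the compact singleton $\{x_0\}$, and Theorem~\ref{thm:quasi-open} applied to any open $V\subseteq\OO'$ gives openness of $f(V)$. For this to be legitimate you need open subsets of $\OO'$ to be open in $\OO$, and you supply exactly the required fact --- that $\overline{D}_f\cup W_f$ is closed in $\OO$ --- via the continuity (indeed polynomial dependence, by bilinearity of the slice product) of $c\mapsto N(f-c)(x)$ for fixed $x$, combined with the dichotomy of Theorem~\ref{thm:fibers}; this closedness is a point the paper leaves implicit, although it is also tacitly used there, since the Titus--Young theorem is applied to $f_{|_{\OO'}}$ viewed as a map on an open subset of $\rr^8$. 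So your version unifies both halves under the quasi-open theorem and makes the openness of the restricted domain explicit, at the cost of the extra fiber analysis; the paper's version is shorter because lightness plus the sign of the Jacobian feeds straight into the classical interiority theorem.
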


\begin{proof}
Let $\OO':=\OO\setminus(\overline{D}_f\cup W_f)$ and observe that $f_{|_{\OO'}}$ is a light function as a consequence of Theorem~\ref{thm:fibers}. Moreover, $f_{|_{\OO'}}$ is of class $C^\omega$ and $\det(df_{x_0})\geq0$ for all $x_0\in\OO'$ as proven in Theorem~\ref{thm:jacobian}. By~\cite[Theorem 2]{titusyoung}, it follows that $f_{|_{\OO'}}$ is an open map.

Now let us take the additional assumption that $W_f=\emptyset$. Pick any circular open subset $U$ of $\OO$ and any $b\in f(U)$. At least one connected component $C$ of $f^{-1}(b)$ intersects $U$. Moreover, by Theorem~\ref{thm:fibers}, $C$ is a compact set entirely contained in the circular open set $U$. Now, Theorem~\ref{thm:quasi-open} implies that $b$ is an interior point of $f(U)$.
\end{proof}

We point out that closing $D_f$ means adding to it a discrete set at most.

\begin{remark}
Under the hypotheses of the previous theorem, $N_f$ is a closed subset of $\OO$ and $V(f'_s)$ is a closed subset of $\OO\setminus\rr$, included in $N_f$. As a consequence,
\[\overline{D}_f\setminus D_f\subseteq N_f\cap\rr=V(f'_c)\cap\rr\,.\]
If $f$ is not slice constant, then $\overline{D}_f\setminus D_f$ is a closed and discrete subset of $\OO\cap\rr$.
\end{remark}

The present version of the Open Mapping Theorem is sharp. In the next example, $f$ is not an open mapping unless $\overline{D}_f$ is removed from $\OO$. The same was true over quaternions, see~\cite[page 814]{open}.

\begin{example}
The function $f(x)=x^2$ of Examples~\ref{ex:square} and~\ref{ex:square2} has $\overline{D}_f=\im(\oo)$. The ball $B(i,1)$ includes $i\in\overline{D}_f$, but it does not intersect $\cc_J$ for any $J\in\s,J\perp i$. Thus, while $f(B(i,1))$ includes $-1$, it does not include any point of $\cc_J\setminus\rr$ for any $J\in\s,J\perp i$. As a consequence, $f(B(i,1))$ is not an open subset of $\oo$.
\end{example}

In the following Example, $f$ is not an open mapping, unless $W_f$ is removed from $\OO$, and $f(\OO)$ is not an open subset of $\oo$.

\begin{example}
The map $f(x):=2x\cdot\eta_{-i}(x)$ of Example~\ref{ex:1flatwing} has $\OO=\oo\setminus\rr$ and $N_f=W_f=W_{f,0}=\cc_{-i}^+$. It fulfills the following equalities for all $\alpha,\beta\in\rr$ with $\beta>0$, for all $J\in\s$ and for $J':=J-\langle J,i\rangle i$:
\[f(\alpha+\beta J)=\alpha+\beta i+\beta J-\alpha Ji=\alpha(1+\langle J,i\rangle)+\beta(1+\langle J,i\rangle)i+\beta J'-\alpha J'i\,.\]
The open set $f(\OO\setminus W_f)$ is included in the half-space $\{x\in\oo\, | \,\langle x,i\rangle>0\}$, while $f(W_f)=\{0\}$. Thus, $f(\OO)=f(\OO\setminus W_f)\cup\{0\}$ is not an open set.
\end{example}

For the sake of completeness, we include in this section a study of the wings of a slice regular function.

\begin{remark}
We saw in Theorem~\ref{thm:fibers} that a wing $W_{f,c}$ can only exist if $f$ is a slice regular function on a product domain, i.e., an $\OO_D$, where $D$ is an open subset of $\cc$ that does not intersect $\rr$ and has two connected components $D^+,D^-$, switched by complex conjugation. By direct inspection in the proof, there exists an injective real analytic map $\omega:D^+\to\OO_D$ such that $\omega(D^+)=W_{f,c}$. The map $\omega$ can be constructed as follows: fix any $I\in\s$ and set, for each $\alpha+i\beta\in D^+$,
\begin{equation}\label{eq:omega1}
\omega(\alpha+i\beta):=\alpha+(c-\vs f(\alpha+\beta I))f'_s(\alpha+\beta I)^{-1}
\end{equation}
if $f'_s(\alpha+\beta I)\neq0$ and
\begin{equation}\label{eq:omega2}
\omega(\alpha+i\beta):=\alpha-\vs h(\alpha+\beta I)h'_s(\alpha+\beta I)^{-1}
\end{equation}
if, instead, $f(x)=c+\Delta_{\alpha+\beta I}^n(x)h(x)$ for some $n>0$ and some $h\in\mc{SR}(\OO_D)$ that does not vanish identically in $\alpha+\beta\s$. We note that, by construction, the map $\omega$ is independent of the choice of $I\in\s$.
\end{remark}

Wings are studied in further detail in the next theorem.  

\begin{theorem}
Let $f:\OO_D\to\oo$ be a slice regular function admitting a wing $W_{f,c}$ and let $\omega:D^+\to\OO_D$ be the map described in the previous remark.
\begin{enumerate}
\item $\omega$ is a real analytic embedding and the wing $W_{f,c}$ intersects transversally each sphere $\alpha_0+\beta_0\s$ in $\OO_D$ at the point $\omega(\alpha_0+i\beta_0)$.
\item If $x_0=\alpha_0+\beta_0 J\in W_{f,c}$ then $d\omega_{\alpha_0+i\beta_0}$ maps the vectors $1,i$ to the vectors 
\[1-ab^{-1}, J-(Ja)b^{-1} \in T_{x_0}W_{f,c}\,,\]
where $a:=f'_c(x_0),b:=f'_s(x_0)$ if $x_0\not\in D_f$ and $a:=h'_c(x_0),b:=h'_s(x_0)$ if instead $f(x)=c+\Delta_{x_0}^n(x)h(x)$ for some $n>0$ and some $h\in\mc{SR}(\OO_D)$ that does not vanish identically in $\alpha_0+\beta_0\s$. In particular, $a\neq b$.
\item If, for each $x_0=\alpha_0+\beta_0 J\in W_{f,c}$, we define $\mc{J}_{x_0}$ to be the restriction of $R_{b^{-1}} \circ L_{J} \circ R_{b}$ to $T_{x_0}W_{f,c}$, then $\omega$ is a biholomorphism between the Riemann surfaces $(D^+,i)$ and $(W_{f,c},\mc{J})$.
\end{enumerate}
\end{theorem}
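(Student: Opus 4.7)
The plan starts from a geometric observation: by the construction of $\omega$ in the preceding remark, $\omega(\alpha+i\beta)$ lies in the sphere $\alpha+\beta\,\s$ for every $(\alpha,\beta)\in D^+$. Thus we may write
\[\omega(\alpha+i\beta)=\alpha+\beta\,\widehat{J}(\alpha,\beta)\]
for a real-analytic map $\widehat{J}:D^+\to\s$ with $\widehat{J}(\alpha_0,\beta_0)=J$. This trivialises injectivity (distinct points in $D^+$ land in disjoint spheres), provides a continuous inverse $x\mapsto(\re x,|\im x|)$ on the image, and reduces the embedding statement of Part~1 to the rank-two property of $d\omega$. Differentiating $|\widehat{J}|\equiv 1$ shows $\partial_\alpha\widehat{J},\partial_\beta\widehat{J}\in T_J\s=\cc_J^\perp$ at $(\alpha_0,\beta_0)$, so
\[\partial_\alpha\omega\in 1+\cc_J^\perp,\qquad \partial_\beta\omega\in J+\cc_J^\perp.\]

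For Part~2 in the generic case $x_0\notin D_f$, the relation $f\circ\omega\equiv c$ gives $\partial_\alpha\omega,\partial_\beta\omega\in\ker(df_{x_0})$. Since $W_{f,c}\subseteq N_f$, Corollary~\ref{cor:singularset} yields $b:=f'_s(x_0)\neq 0$ and $ab^{-1}\in\cc_J^\perp$ with $a:=f'_c(x_0)$. Combining Proposition~\ref{prop:differential} with Lemma~\ref{lem:scalarproduct}, one parametrises $\ker(df_{x_0})$ as $\{v_1-(v_1a)b^{-1}:v_1\in\cc_J\}$, with $-(v_1a)b^{-1}$ automatically lying in $\cc_J^\perp$ thanks to $ab^{-1}\in\cc_J^\perp$. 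Matching $\cc_J$-components against the previous display forces $v_1=1$ for $\partial_\alpha\omega$ and $v_1=J$ for $\partial_\beta\omega$, delivering the announced formulas $1-ab^{-1}$ and $J-(Ja)b^{-1}$.

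The case $x_0\in D_f$ requires a detour because the Leibniz rule applied to $f=c+\Delta_{x_0}^n\cdot h$ shows $f'_c(x_0)=f'_s(x_0)=0$, collapsing $\ker(df_{x_0})$ to $\oo$. The remedy is that~\eqref{eq:omega2} literally coincides with formula~\eqref{eq:omega1} for the wing $W_{h,0}$ of $h$: the identity $\omega(\alpha_0+i\beta_0)=x_0$ forces $h(x_0)=0$, while the proof of Theorem~\ref{thm:fibers} supplies $h'_s(x_0)\neq 0$; hence $x_0\notin D_h$ and the generic analysis transfers to $h$ at $x_0$, yielding the same formulas with $a=h'_c(x_0), b=h'_s(x_0)$. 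In both cases $ab^{-1}\in\cc_J^\perp$ excludes $ab^{-1}=1$, so $a\neq b$; linear independence of $v:=1-ab^{-1}$ and $w:=J-(Ja)b^{-1}$ follows from their $\cc_J$-components being $1$ and $J$, establishing the immersion property, and transversality with $T_{x_0}(\alpha_0+\beta_0\s)=\cc_J^\perp$ is immediate because the $\cc_J$-projection of $T_{x_0}W_{f,c}$ is all of $\cc_J$.

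For Part~3, since $b\neq 0$ in both cases, $\mathcal{J}_0:=R_{b^{-1}}\circ L_J\circ R_b$ is a constant OCS on $\oo$ by Proposition~\ref{prop:compositionR} and Theorem~\ref{thm:LandR}. Using Artin's theorem, $vb=b-a$ and $wb=Jb-Ja$; applying $L_J$ (via $J^2=-1$) and then $R_{b^{-1}}$ one computes $\mathcal{J}_0(v)=w$ and $\mathcal{J}_0(w)=-v$. Hence $T_{x_0}W_{f,c}$ is $\mathcal{J}_0$-invariant, $\mathcal{J}_{x_0}$ is a well-defined complex structure on it, and $d\omega$ intertwines multiplication by~$i$ on $D^+$ with $\mathcal{J}_{x_0}$, giving the biholomorphism. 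The main obstacle is the clean handling of Case~2: one must identify $W_{f,c}$ with the zero locus of $h$ near $x_0$, verify $h(x_0)=0$ and $x_0\notin D_h$ from the factorisation, and then import both the differential formulas and the OCS from the generic analysis of $h$ to $f$.
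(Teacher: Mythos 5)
Your argument is correct, and for the core computation (Part~2) it takes a genuinely different route from the paper. The paper obtains $d\omega_{\alpha_0+i\beta_0}$ by brute force: it multiplies \eqref{eq:omega1} (resp.\ \eqref{eq:omega2}) through by $f'_s$ (resp.\ $h'_s$), differentiates in $\alpha$ and $\beta$, and feeds in the identities of Lemma~\ref{lem:partialderivatives}; this yields the formulas $\partial_\alpha\omega\, f'_s=f'_s-f'_c\circ\omega$ and its $\beta$-analogue at \emph{every} nearby point, from which the tangent vectors at $x_0$ drop out, with no need to know beforehand that $W_{f,c}\subseteq N_f$ or the rank of $df_{x_0}$. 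You instead differentiate the implicit relation $f\circ\omega\equiv c$, invoke the description of $df_{x_0}$ from Proposition~\ref{prop:differential}, use $W_{f,c}\subseteq N_f$ together with \eqref{eq:rank6} and Lemma~\ref{lem:scalarproduct} to parametrize $\ker(df_{x_0})$ as $\{v_1-(v_1a)b^{-1}:v_1\in\cc_J\}$, and pin down the two tangent vectors by their $\cc_J$-components, which the sphere constraint $\omega(\alpha+i\beta)\in\alpha+\beta\,\s$ forces to be $1$ and $J$; this is more conceptual, recycles the Section~\ref{sec:differential} machinery, and makes the immersion/transversality claims of Part~1 transparent (your trivialization via $\widehat J$ is essentially the paper's $\Phi,\Psi$ diagram). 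Parts~1 and~3 then match the paper's proof, except that the paper defines $\mc J$ as the push-forward (so the biholomorphism is automatic) and identifies it with $R_{b^{-1}}\circ L_J\circ R_b$, whereas you start from the latter and verify $\mc J_0(v)=w$, $\mc J_0(w)=-v$ explicitly — the same Artin-theorem computation the paper leaves implicit. The one step you only sketch is the degenerate case $x_0\in D_f$: to transfer the kernel argument to $h$ you need $h\circ\omega\equiv0$ on a whole neighborhood of $\alpha_0+i\beta_0$, not merely the pointwise coincidence of \eqref{eq:omega2} with \eqref{eq:omega1} for $h$. This is easily supplied — since $\Delta_{x_0}^n$ is slice preserving, $(f-c)(x)=\Delta_{x_0}^n(x)h(x)$ pointwise, so $V(f-c)=\s_{x_0}\cup V(h)$ and on every nearby sphere other than $\s_{x_0}$ the unique zero of $f-c$ is the unique zero of $h$ — and in fairness the paper's own ``From~\eqref{eq:omega2}, we derive the equality'' relies on the same local identification of $W_{f,c}$ with the zero locus of $h$, which you at least flag explicitly.
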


\begin{proof}
We prove one fact at a time.
\begin{enumerate}
\item We may construct a commutative diagram
\[\begin{tikzcd}
D^+\arrow{dr}{\omega} \arrow{r}{\Phi}
&D^+\times\s \arrow{d}{\Psi} \\
&\OO_D
\end{tikzcd}\]
by setting $\Phi(z)=(z,\phi(z))$ with $\phi(\alpha+i\beta):=(\omega(\alpha+i\beta)-\alpha)\beta^{-1}$ and $\Psi(\alpha+i\beta,J):=\alpha+\beta J$. Clearly, $\Phi$ is a real analytic embedding and $\Psi$ is a real analytic isomorphism with inverse $\Psi^{-1}(x)=(\re(x)+i|\im(x)|,\im(x)|\im(x)|^{-1})$. Moreover, $\Psi$ maps each product $\{\alpha+i\beta\}\times\s$ onto the $6$-sphere $\alpha+\beta\s$. It follows that $\omega$ is an embedding and $W_{f,c}$ is transverse in $\OO_D$ to each sphere $\alpha+\beta\s$ at $\omega(\alpha+i\beta)$.
\item We first consider $x_0=\omega(\alpha_0+i\beta_0)\in W_{f,c}\setminus D_f$. Fix $I\in\s$. From~\eqref{eq:omega1}, we derive the equality
\[\omega(\alpha+i\beta)f'_s(\alpha+\beta I)=\alpha f'_s(\alpha+\beta I)+c-\vs f(\alpha+\beta I)\,,\]
whence
\begin{align*}
&\frac{\partial \omega}{\partial \alpha}(\alpha+i\beta) f'_s(\alpha+\beta I) =f'_s(\alpha+\beta I) + (\alpha-\omega(\alpha+i\beta)) \frac{\partial f'_s(\alpha+\beta I)}{\partial \alpha}-\frac{\partial \vs f(\alpha+\beta I)}{\partial \alpha}\,,\\\nonumber
&\frac{\partial \omega}{\partial \beta}(\alpha+i\beta) f'_s(\alpha+\beta I) = (\alpha-\omega(\alpha+i\beta)) \frac{\partial f'_s(\alpha+\beta I)}{\partial \beta}-\frac{\partial \vs f(\alpha+\beta I)}{\partial \beta}\,.
\end{align*}
Using Formulas~\eqref{eq:partialderivatives}, we conclude that
\begin{align*}
\frac{\partial \omega}{\partial \alpha}(\alpha+i\beta) f'_s(\alpha+\beta I) &= f'_s(\alpha+\beta I) - \vs{(f'_c)}(\alpha+\beta I) + (\alpha-\omega(\alpha+i\beta)) (f'_c)'_s(\alpha+\beta I)\\
&=f'_s(\alpha+\beta I) - f'_c(\omega(\alpha+i\beta))\,,\\
\frac{\partial \omega}{\partial \beta}(\alpha+i\beta) f'_s(\alpha+\beta I) &= \frac{\omega(\alpha+i\beta)-\alpha}{\beta}f'_s(\alpha+\beta I) +  \frac{\alpha-\omega(\alpha+i\beta)}{\beta} \vs{(f'_c)}(\alpha+\beta I)\\
&\phantom{=}+ \beta(f'_c)'_s(\alpha+\beta I)\\
&=\frac{\omega(\alpha+i\beta)-\alpha}{\beta} \left(f'_s(\alpha+\beta I) - f'_c(\omega(\alpha+i\beta))\right)\,.
\end{align*}
At $\alpha_0+i\beta_0$, using the equalities $\omega(\alpha_0+i\beta_0)=x_0$ and $f'_s(\alpha_0+\beta_0I)=f'_s(x_0)$, we conclude that
\begin{align*}
&\frac{\partial \omega}{\partial \alpha}(\alpha_0+i\beta_0) = 1 -f'_c(x_0)f'_s(x_0)^{-1}=1-ab^{-1}\,,\\
&\frac{\partial \omega}{\partial \beta}(\alpha_0+i\beta_0) = J -(Jf'_c(x_0))f'_s(x_0)^{-1}=J-(Ja)b^{-1}\,.
\end{align*}
with $a:=f'_c(x_0), b:=f'_s(x_0)$.

Now we consider $x_0=\omega(\alpha_0+i\beta_0)\in W_{f,c}\cap D_f$, so that $f(x)=c+\Delta_{x_0}^n(x)h(x)$ for some $n>0$ and some $h\in\mc{SR}(\OO_D)$ that does not vanish identically in $\alpha+\beta\s$. From~\eqref{eq:omega2}, we derive the equality
\[\omega(\alpha+i\beta)h'_s(\alpha+\beta I)=\alpha h'_s(\alpha+\beta I)-\vs h(\alpha+\beta I)\,.\]
Reasoning as before, we can prove that $\frac{\partial \omega}{\partial \alpha}(\alpha_0+i\beta_0)=1-ab^{-1}$ and $\frac{\partial \omega}{\partial \beta}(\alpha_0+i\beta_0)=J-(Ja)b^{-1}$ with $a:=h'_c(x_0), b:=h'_s(x_0)$.
\item Since $\omega:D^+\to W_{f,c}$ a real analytic isomorphism, we can consider the complex structure $\mc{J}$ defined on $W_{f,c}$ as the push-forward of the standard complex structure on $D^+$ via $\omega$. By construction, $\omega:(D^+,i)\to(W_{f,c},\mc{J})$ is a biholomorphism. Taking into account our previous computations, at each $x_0=\alpha_0+\beta_0 J\in W_{f,c}$ the structure $\mc{J}_{x_0}$ maps $\frac{\partial \omega}{\partial \alpha}(\alpha_0+i\beta_0)=1-ab^{-1}$ to $\frac{\partial \omega}{\partial \beta}(\alpha_0+i\beta_0)=J-(Ja)b^{-1}$ and the latter to $-\frac{\partial \omega}{\partial \alpha}(\alpha_0+i\beta_0)=-1+ab^{-1}$. In other words, $\mc{J}_{x_0}$ coincides with the restriction of $R_{b^{-1}} \circ L_{J} \circ R_{b}$ to $T_{x_0}W_{f,c}$.
\end{enumerate}
\end{proof}

\begin{example}\label{ex:1wing}
The slice regular function on $\oo\setminus\rr$ defined by 
\[f(x):=2(x^2+xj+\ell)\cdot\eta_{-i}(x)\]
has a wing $W_{f,0}$ by Remark~\ref{rmk:wingproducedbyeta}. If $g(x)=x^2+xj+\ell$ then, for all $\alpha,\beta\in\rr$ with $\beta>0$ it holds that
\begin{align*}
&\vs g(\alpha+\beta J)=\alpha^2-\beta^2+\alpha j+\ell,\\
&g'_s(\alpha+\beta J)=2\alpha+j,\\
&\vs{(2\eta_{-i})}(\alpha+\beta J)\equiv1,\\
&(2\eta_{-i})'_s(\alpha+\beta J)=-\beta^{-1}i,\\
&\vs f(\alpha+\beta J)=\alpha^2-\beta^2+2\alpha\beta i+\alpha j-\beta k+\ell,\\
&f'_s(\alpha+\beta J)=2\alpha-(\alpha^2-\beta^2)\beta^{-1}i+j+\alpha\beta^{-1}k-\beta^{-1}\ell i\,,
\end{align*}
where the last two equalities take into account Formulas~\eqref{eq:leibnizspherical}. The parametrization $\omega:\cc^+\to W_{f,0}$ can be computed explicitly as
\[\omega(\alpha+i\beta)=\alpha-\frac{\beta}{1+r+r^2}\left((-1-r+r^2)i+2\beta rj-2\alpha r k+4\alpha\beta\ell+2(\alpha^2-\beta^2)\ell i\right)\,,\]
where $r:=\alpha^2+\beta^2$. We notice that the linear span of the elements of $W_{f,0}$ is the $6$-dimensional vector space $\rr+i\rr+j\rr+k\rr+\ell\rr+\ell i\rr$, whence there is no subalgebra of $\oo$ isomorphic to $\hh$ that includes $W_{f,0}$.
\end{example}

%%%%%%%%%%%%%%%%%%%%%%%

\section{Octonionic slice regular functions and branching}\label{sec:branchedcoverings}

In this section, we prove that the branch set of a slice regular function is its singular set. This allows us to complete the results of Section~\ref{sec:inducedacs} about induced almost-complex structures. We begin with three preliminary results. The first one is the octonionic analog of~\cite[Remark 3.5]{ocs}.

\begin{lemma}\label{lem:stereographicprojection}
Fix $x_0=\alpha_0+\beta_0J$ with $\alpha_0,\beta_0 \in \rr, \beta_0>0$ and $J \in \s$. Consider the map
\[\Theta:\quad\s_{x_0}\setminus\{x_0^c\}\to\cc_J^\perp\,,\quad x\mapsto(x-x_0)(x-x_0^c)^{-1}\]
and the affine transformation $\rho:\ \cc_J^\perp\to \alpha_0+\cc_J^\perp\,,\ x\mapsto \alpha_0-\beta_0Jx$. Then $\Theta$ is an orientation-preserving conformal transformation and $\rho\circ\Theta$ is the stereographic projection of $\s_{x_0}$ from the point $x_0^c$ to $\alpha_0+\cc_J^\perp$, which is the affine plane tangent to $\s_{x_0}$ at $x_0$. 
\end{lemma}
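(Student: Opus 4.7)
The approach is a direct computation followed by a geometric identification. First I would parametrize any $x\in\s_{x_0}\setminus\{x_0^c\}$ as $x=\alpha_0+\beta_0 I$ with $I\in\s\setminus\{-J\}$, so that $x-x_0=\beta_0(I-J)$ and $x-x_0^c=\beta_0(I+J)$. By Artin's theorem, the subalgebra generated by $I$ and $J$ is associative, so the expression $(I-J)(I+J)^{-1}$ can be manipulated by the usual rules. Using $(I+J)^c=-(I+J)$ and $n(I+J)=2(1+\langle I,J\rangle)$, together with $(I-J)(I+J)=I^2+IJ-JI-J^2=IJ-JI=2(I\times J)$, one obtains
\[
\Theta(x)=\frac{-(I-J)(I+J)}{n(I+J)}=\frac{-I\times J}{1+\langle I,J\rangle}.
\]
Decomposing $I=\langle I,J\rangle J+I^\perp$ with $I^\perp\in\cc_J^\perp$ and expanding, one finds $I\times J=\im(IJ)=I^\perp J$, so $\Theta(x)=-I^\perp J/(1+\langle I,J\rangle)$.

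Next I would verify that the image lies in $\cc_J^\perp$ and identify $\rho\circ\Theta$ with stereographic projection. Since $I^\perp$ and $J$ are mutually orthogonal imaginary octonions, they anticommute ($JI^\perp=-I^\perp J$), $I^\perp J$ is imaginary, and $\langle I^\perp J,J\rangle=-\re(I^\perp J^2)=\re(I^\perp)=0$ by Artin; hence $\Theta(x)\in\cc_J^\perp$. Applying $\rho$ and using $J(I^\perp J)=(JI^\perp)J=-I^\perp J^2=I^\perp$ (again by Artin) gives
\[
\rho(\Theta(x))=\alpha_0-\beta_0 J\,\Theta(x)=\alpha_0+\frac{\beta_0}{1+\langle I,J\rangle}\,I^\perp.
\]
Separately, the line through $x_0^c$ and $x$ reads $\alpha_0+\beta_0\bigl[(t(1+\langle I,J\rangle)-1)J+tI^\perp\bigr]$ for $t\in\rr$, which meets $\alpha_0+\cc_J^\perp$ precisely at $t=1/(1+\langle I,J\rangle)$, with intersection point $\alpha_0+\beta_0 I^\perp/(1+\langle I,J\rangle)$. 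This matches $\rho(\Theta(x))$, identifying $\rho\circ\Theta$ as the stereographic projection of $\s_{x_0}$ from $x_0^c$ onto $\alpha_0+\cc_J^\perp$.

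For the orientation-preserving conformality of $\Theta$, I would argue as follows. The affine map $\rho$ is the composition of a translation with the linear map $y\mapsto-\beta_0 Jy$; the anticommutation above shows $L_J(\cc_J^\perp)\subseteq\cc_J^\perp$, and Theorem~\ref{thm:LandR} together with a Pfaffian computation in a distinguished splitting basis (the matrix of $L_J\vert_{\cc_J^\perp}$ is three diagonal copies of $H$, with $\mr{Pf}=(-1)^3=-1$, i.e.\ positive orientation in the paper's convention) yields that $\rho$ is an orientation-preserving conformal transformation between the two $6$-planes with scaling factor $\beta_0$. Combined with the classical fact that stereographic projection from a pole of a round $6$-sphere onto a parallel hyperplane is an orientation-preserving conformal diffeomorphism, we deduce that $\Theta=\rho^{-1}\circ(\rho\circ\Theta)$ is orientation-preserving conformal. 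I expect the main delicacy to be pinning down the orientation convention on $\s_{x_0}$ rather than the conformality itself; as a sanity check, one can compute $d\Theta_{x_0}$ directly at $x=x_0$ (where $\Theta(x_0)=0$) and observe that it is a positive scalar multiple of an explicit orientation-preserving isomorphism $T_{x_0}\s_{x_0}\to\cc_J^\perp$ under the natural identification $T_{x_0}\s_{x_0}=\cc_J^\perp$.
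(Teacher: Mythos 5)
Your proposal is correct and takes essentially the paper's route: you derive the same identity $(x-x_0)(x-x_0^c)^{-1}=\frac{J\times I}{1+\langle J,I\rangle}$ (the paper computes it on the unit sphere $\s$ and then transfers to $\s_{x_0}$ by a dilation and translation, while you work directly on $\s_{x_0}$) and verify the same collinearity of $x_0^c$, $x$ and $\rho(\Theta(x))$, with the conformality/orientation claim handled at the same level of detail as in the paper. One minor remark: the Pfaffian computation is beside the point for the orientation of $\rho$ --- it suffices that the linear part $y\mapsto-\beta_0Jy$ of $\rho$ has determinant $(-\beta_0)^6\det\big(L_J|_{\cc_J^\perp}\big)=\beta_0^6>0$ on $\cc_J^\perp$.
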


\begin{proof}
For $J\in\s$ fixed, consider the map from $\s\setminus\{-J\}$ to $\cc_J^\perp$ defined as
\[L\mapsto(L-J)(L+J)^{-1}=\frac{J\times L}{1+\langle J,L\rangle}\,.\]
When composed with the rotation $x\mapsto-Jx$ of $\cc_J^\perp$, it is the stereographic projection of $\s$ from the point $-J$ to the tangent plane $\cc_J^\perp$ at $J$. Indeed, every $L\in \s\setminus\{-J\}$ can be expressed as $L=\sin(\theta) I+\cos(\theta) J$ for some $I\in\s$ orthogonal to $J$ and some $\theta\in[0,\pi)$. Now, for $x=\frac{J\times L}{1+\langle J,L\rangle}=\frac{\sin(\theta)}{1+\cos(\theta)}JI$, the points $-J,L,-Jx$ are aligned: $-Jx+J=\frac{\sin(\theta) I+ (1+\cos(\theta))J}{1+\cos(\theta)}$ is a real rescaling of $L+J=\sin(\theta) I+ (1+\cos(\theta))J$.

Our thesis follows by applying a real dilation and a real translation to transform $\s$ into $\s_{x_0}$.
\end{proof}

The second preliminary result is a nonassociative generalization of~\cite[Proposition 3.6]{ocs}.

\begin{theorem}\label{thm:multiplicityatsingularity}
Let $f:\OO\to\oo$ be a slice regular function and fix $x_0\in\OO$. The point $x_0$ belongs to the singular set $N_f$ if, and only if, there exists $\widetilde x_0\in\s_{x_0}$ such that
\[f(x)=f(x_0)+(x-x_0)\cdot((x-\widetilde x_0)\cdot h(x))\]
for some slice regular function $h$ on $\OO$.
\end{theorem}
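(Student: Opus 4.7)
The plan is to apply Theorem~\ref{thm:factorization} twice. Since translating $f$ by a constant does not affect its differential, I would first replace $f$ by $\tilde f:=f-f(x_0)$, so that $\tilde f(x_0)=0$ and $N_{\tilde f}=N_f$; Theorem~\ref{thm:factorization} then yields $g\in\mc{SR}(\OO)$ with $\tilde f(x)=(x-x_0)\cdot g(x)$. The heart of the argument will be the equivalence
\[(*)\qquad x_0\in N_f \iff V(g)\cap\s_{x_0}\neq\emptyset.\]
Granted $(*)$, any $\widetilde x_0\in V(g)\cap\s_{x_0}$ gives, via a second application of Theorem~\ref{thm:factorization} to $g$, a slice regular $h$ with $g=(x-\widetilde x_0)\cdot h$, producing the desired factorization. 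Conversely, if $f(x)=f(x_0)+(x-x_0)\cdot((x-\widetilde x_0)\cdot h(x))$, set $g:=(x-\widetilde x_0)\cdot h$; the equality $g(\widetilde x_0)=0$ follows from a direct stem-function computation analogous to the one showing $\tilde f(x_0)=0$, and $(*)$ then delivers $x_0\in N_f$.

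For $(*)$ when $x_0\in\rr$ I would use that $\s_{x_0}=\{x_0\}$ and that $x-x_0$ is slice preserving: Leibniz~\eqref{eq:leibnizcullen} combined with $(x-x_0)'_c\equiv 1$ gives $f'_c=g+(x-x_0)\,g'_c$, which evaluates to $f'_c(x_0)=g(x_0)$. Corollary~\ref{cor:singularset} then identifies $x_0\in N_f$ with $f'_c(x_0)=0$, hence with $g(x_0)=0$.

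For $(*)$ when $x_0=\alpha_0+\beta_0 J\in\OO\setminus\rr$ with $\beta_0>0$ I would argue entirely at the level of stem functions. Writing the stem of $g$ as $G(z)=G_0(z)+\ui G_1(z)$ with $G_0,G_1\in\oo$, and setting $P(z):=z-x_0$, $z_0:=\alpha_0+i\beta_0$, the pivotal identity is
\[\phi_J\bigl(P(z_0)\,W\bigr)=0\quad\text{for every }W\in\oo_\cc,\]
reflecting that $P(z_0)=\beta_0(-J+\ui)$ is a zero divisor in $\oo_\cc$; it can be checked by direct expansion using $\phi_J(a+\ui b)=a+Jb$ and $J^2=-1$. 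Feeding this into $F=PG$ and $\partial F/\partial z=G+P\,\partial G/\partial z$ yields the clean formulas
\[f'_c(x_0)=g(x_0)=G_0(z_0)+JG_1(z_0),\qquad f'_s(x_0)=g(x_0^c)=G_0(z_0)-JG_1(z_0),\]
the latter from the expansion $F_1(z_0)=\beta_0(G_0-JG_1)$. Parametrizing $\widetilde x_0\in\s_{x_0}$ as $\alpha_0+\beta_0 K$ with $K\in\s$, one has $g(\widetilde x_0)=G_0+KG_1$, so a zero of $g$ on $\s_{x_0}$ is a solution of $KG_1=-G_0$ with $K\in\s$. By Artin's theorem, when $G_1\neq 0$ the unique candidate $K=-G_0G_1^{-1}$ lies in $\s$ precisely when $|G_0|=|G_1|$ and $\langle G_0,G_1\rangle=0$; when $G_1=0$ a zero exists iff $G_0=0$.

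To close $(*)$, I would translate these conditions into Corollary~\ref{cor:system}. Using $\langle v,Jv\rangle=0$ for all $v\in\oo$ and $\langle Ju,v\rangle=-\langle u,Jv\rangle$, a direct expansion gives
\[\langle f'_c(x_0),f'_s(x_0)\rangle=|G_0|^2-|G_1|^2,\qquad \langle f'_c(x_0),J f'_s(x_0)\rangle=2\langle G_0,G_1\rangle.\]
Noting $\langle ab^c,1\rangle=\langle a,b\rangle$ and, via $\re((x,y,z))=0$ in $\oo$, $\langle ab^c,J\rangle=\langle a,Jb\rangle$, Corollary~\ref{cor:system} then states that $x_0\in N_f$ iff both displayed scalar products vanish -- matching exactly the condition for $g$ to have a zero on $\s_{x_0}$. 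The main obstacle I foresee is the stem-function bookkeeping in the nonassociative algebra $\oo_\cc$: verifying $\phi_J(P(z_0)W)=0$ and propagating this identity through $F=PG$ without illicitly using associativity or commutativity.
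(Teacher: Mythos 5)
Your proposal is correct and follows essentially the same route as the paper's alternate proof of this theorem: factor $f-f(x_0)=(x-x_0)\cdot g$ via Theorem~\ref{thm:factorization}, show via Corollary~\ref{cor:system} that $x_0\in N_f$ exactly when $g$ vanishes somewhere on $\s_{x_0}$, then factor again. The only difference is bookkeeping: where the paper identifies the two scalar products with $(N(g))^\circ_s(x_0)$ and $|\im(x_0)|^2(N(g))'_s(x_0)$ via~\eqref{eq:normalspherical}, you work with the stem components $G_0=\vs g(x_0)$ and $G_1=|\im(x_0)|\,g'_s(x_0)$ and solve $KG_1=-G_0$ directly, which is an equivalent and correctly justified calculation.
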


\begin{proof}
By~\cite[Theorem 7.2]{gpssingularities} (which generalized~\cite{expansionsalgebras}), an expansion
\begin{align*}
f(x)&=A_0+(x-x_0)\cdot A_1+\Delta_{x_0}(x)\cdot A_2+\Delta_{x_0}(x)\cdot(x-x_0)\cdot A_3+\ldots
\end{align*}
with $A_0,A_1,A_2,A_3,\ldots\in\oo$ is possible. The series on the right-hand side of the previous equality converges absolutely and uniformly on compact sets in an open neighborhood of $\s_{x_0}$ in $\OO$, where its sum equals $f(x)$. Thus, there exists a slice regular function $f_1:\OO\to\oo$ such that $f(x)=A_0+(x-x_0)\cdot f_1(x)$ in $\Omega$. By Theorem~\ref{thm:factorization}, $f(x_0)=A_0$. Moreover,  there exists a slice regular function $f_2:\OO\to\oo$ such that $f_1(x)=A_1+(x-x_0^c)\cdot f_2(x)$ in $\Omega$ and it holds that $f_1(x_0^c)=A_1$ (which yields $f'_s(x_0)=A_1$ in case $x_0\not\in\rr$). Finally, there exists a slice regular function $f_3:\OO\to\oo$ such that $f_2(x)=A_2+(x-x_0)\cdot f_3(x)$ in $\Omega$ and it holds that $f_2(x_0)=A_2$. Overall,
\[f(x)=f(x_0)+(x-x_0)\cdot A_1+\Delta_{x_0}(x)\cdot A_2+\Delta_{x_0}(x)\cdot(x-x_0)\cdot f_3(x)\]
in $\OO$. As a byproduct, we get that
\[f(x)=f(x_0)+(x-x_0)\cdot (A_1+2\im(x_0)A_2)+(x-x_0)^{\punto2}\cdot (A_2+(x-x_0^c)\cdot f_3(x))\]
in $\OO$. Using the Leibniz rule~\eqref{eq:leibnizcullen} and Theorem~\ref{thm:factorization}, we conclude that $f'_c(x_0)=A_1+2\im(x_0)A_2$.

It holds that $x_0\in D_f$ (or $x_0\in N_f\cap\rr$) if, and only if, $A_1=0$, i.e.,
\begin{align*}
f(x)&=f(x_0)+\Delta_{x_0}(x)\cdot A_2+\Delta_{x_0}(x)\cdot(x-x_0)\cdot f_3(x)\\
&=f(x_0)+(x-x_0)\cdot((x-x_0^c)\cdot(A_2+(x-x_0)\cdot f_3(x)))\,.
\end{align*}

We now characterize the case $x_0\in N_f\setminus D_f$. For $x_0\in\cc_J$, that condition is equivalent to $f'_c(x_0)f'_s(x_0)^{-1}\in\cc_J^\perp$ by Theorem~\ref{thm:rankofdifferential}. This is, in turn, equivalent to $1+(2\im(x_0)A_2)A_1^{-1}\in\cc_J^\perp$ or, taking into account Lemma~\ref{lem:scalarproduct}, $1+2\im(x_0)(A_2A_1^{-1})\in\cc_J^\perp$. By Lemma~\ref{lem:stereographicprojection}, the latter happens if, and only if, there exists $\widetilde x_0\in\s_{x_0}\setminus\{x_0^c\}$ such that
\[1+2\im(x_0)(A_2A_1^{-1})=(\widetilde x_0-x_0)(\widetilde x_0-x_0^c)^{-1}\,,\]
i.e., $A_2A_1^{-1}=(x_0^c-\widetilde x_0)^{-1}$. The latter is equivalent to
\begin{align*}
f(x)&=f(x_0)+(x-x_0)\cdot ((x-\widetilde x_0)\cdot A_2+\Delta_{x_0}(x)\cdot f_3(x))\\
&=f(x_0)+(x-x_0)\cdot ((x-\widetilde x_0)\cdot (A_2+(x-\widetilde x_0^c)\cdot f_3(x)))\,.
\end{align*}
This concludes the proof.
\end{proof}

\begin{proof}[Alternate proof of Theorem~\ref{thm:multiplicityatsingularity}]
Since $\widetilde f(x):=f(x)-f(x_0)$ vanishes at $x=x_0$, Theorem~\ref{thm:factorization} guarantees that there exists a slice regular function $g:\OO\to\oo$ such that
\[\widetilde f(x)=(x-x_0)\cdot g(x).\]
For all $x\in\OO$, the Leibniz rule~\eqref{eq:leibnizcullen} yields
\[f'_c(x)=\widetilde f'_c(x)=g(x)+(x-x_0)\cdot g'_c(x),\quad f'_c(x_0)=g(x_0)\,,\]
where the last equality is true by Theorem~\ref{thm:factorization}. For all $x\in\OO\setminus\rr$, the Leibniz rule~\eqref{eq:leibnizspherical} yields
\[f'_s(x)=\widetilde f'_s(x)=g^\circ_s(x)+(\re(x)-x_0)g'_s(x),\quad f'_s(x_0)=g^\circ_s(x_0)-\im(x_0)g'_s(x_0)\,.\] 

If $x\in\OO\cap\rr$, then the real differential $df_{x_0}$ is singular if, and only if, $0=f'_c(x_0)=g(x_0)$. This is, in turn, equivalent to the existence of a slice regular $h:\OO\to\oo$ such that
\[g(x)=(x-x_0)\cdot h(x)\,,\]
i.e.,
\[\widetilde f(x)=(x-x_0)\cdot((x-x_0)\cdot h(x))\,,\]
as desired.

Suppose, instead, $x_0\in\OO\setminus\rr$. By Corollary~\ref{cor:system}, the real differential $df_{x_0}$ is singular if, and only if, $\langle f'_c(x_0)f'_s(x_0)^{c},1\rangle=0$ and $\langle f'_c(x_0)f'_s(x_0)^{c},\im(x_0)\rangle=0$. Let us express these two scalar products in a different form. We compute:
\begin{align*}
&f'_c(x_0)f'_s(x_0)^c=g(x_0)(g^\circ_s(x_0)-\im(x_0)g'_s(x_0))^c\\
&=\big(g^\circ_s(x_0)+\im(x_0)g'_s(x_0)\big)\big(g^\circ_s(x_0)^c+g'_s(x_0)^c\im(x_0)\big)\\
&=n(g^\circ_s(x_0))+\im(x_0)^2n(g'_s(x_0))+2\im\big((\im(x_0)g'_s(x_0))g^\circ_s(x_0)^c\big)\,.
\end{align*}
Now,
\[\langle f'_c(x_0)f'_s(x_0)^{c},1\rangle=n(g^\circ_s(x_0))+\im(x_0)^2n(g'_s(x_0))=(N(g))^\circ_s(x_0)\,,\]
where we have applied Formulas~\eqref{eq:normalspherical} to $g$. Moreover,
\[\langle f'_c(x_0)f'_s(x_0)^{c},\im(x_0)\rangle=2\langle(\im(x_0)g'_s(x_0))g^\circ_s(x_0)^c,\im(x_0)\rangle=2|\im(x_0)|^2\langle g'_s(x_0)g^\circ_s(x_0)^c,1\rangle\]
thanks to Lemma~\ref{lem:scalarproduct}. By applying Formulas~\eqref{eq:normalspherical} to $g$, we conclude that
\[\langle f'_c(x_0)f'_s(x_0)^{c},\im(x_0)\rangle=|\im(x_0)|^2t(g'_s(x_0)g^\circ_s(x_0)^c)=|\im(x_0)|^2(N(g))'_s(x_0)\,.\]
Thus, $df_{x_0}$ is singular if, and only if, $(N(g))^\circ_s(x_0)=0=(N(g))'_s(x_0)$, which is equivalent to $N(g)_{|_{\s_{x_0}}}\equiv0$. The last equality holds true if, and only if, there exist $\widetilde x_0\in\s_{x_0}$ and a slice regular function $h:\OO\to\oo$ such that
\[g(x)=(x-\widetilde x_0)\cdot h(x)\,.\]
This equality is, in turn, equivalent to
\[\widetilde f(x)=(x-x_0)\cdot((x-\widetilde x_0)\cdot h(x))\,,\]
which is our thesis.
\end{proof}

In other words, a point $x_0$ belongs to the singular set $N_f$ if, and only if, the total multiplicity of $f-f(x_0)$ at $\s_{x_0}$ is greater than $1$.

\begin{lemma}
Let $\OO$ be either a slice domain or a product domain and let $f:\OO\to\oo$ be a slice regular function. If $x_0\in N_f\setminus W_f$, then the total multiplicity of $f-f(x_0)$ at $\s_{x_0}$ is a finite number $n\geq2$. If, moreover, $x_0\not\in D_f$ and if $U_0$ is any neighborhood of $x_0$ in $\OO$, then there exist neighborhoods $U_1,U_2$ of $x_0$ with $U_0\supseteq U_1\supseteq U_2$ and with the following property: for all $x_1\in U_2$, the function $f-f(x_1)$ has finitely many zeros in $U_1$, with total multiplicities whose sum equals $n$. 
\end{lemma}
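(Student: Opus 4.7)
The first assertion follows directly from Theorem~\ref{thm:multiplicityatsingularity}. Since $x_0\in N_f$, there exist $\widetilde x_0\in\s_{x_0}$ and $h\in\mc{SR}(\OO)$ with $f(x)-f(x_0)=(x-x_0)\cdot((x-\widetilde x_0)\cdot h(x))$. Taking normal functions, using multiplicativity of $N$ on slice regular products (cf.\ Remark~\ref{rmk:wingproducedbyeta}), together with the fact that $\s_{\widetilde x_0}=\s_{x_0}$ forces $\Delta_{\widetilde x_0}=\Delta_{x_0}$, one obtains $N(f-f(x_0))=\Delta_{x_0}^2\cdot N(h)$. The hypothesis $x_0\notin W_f$ is equivalent to $N(f-f(x_0))\not\equiv0$, so the total multiplicity $n$ at $\s_{x_0}$ is finite, and the displayed divisibility forces $n\geq2$.

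For the second statement, the plan is to reduce to single-variable complex function theory on the slice $\cc_J$ containing $x_0$ and invoke Hurwitz's theorem. For every constant $c\in\oo$, $N(f-c)$ is slice preserving, hence restricts to a $\cc_J$-holomorphic function $G_c:\OO\cap\cc_J\to\cc_J$, and the family $\{G_c\}_{c\in\oo}$ depends continuously on $c$ in the compact-open topology. By the definition of total multiplicity and the slice factorization $\Delta_y|_{\cc_J}=(z-y)(z-y^c)$, the order of vanishing of $G_c$ at a non-real $y\in\cc_J^+$ equals the total multiplicity of $f-c$ at $\s_y$; at a real $y\in\rr$, the order equals twice that multiplicity and is automatically even, because a real zero of $f-c$ of classical multiplicity $s$, factored out via Theorem~\ref{thm:factorization}, contributes $(x-y)^{2s}$ to the normal function. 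Since $x_0\notin W_f$, $G_{f(x_0)}$ is not identically zero; it has an isolated zero at $x_0$ of order $n$ if $x_0\in\cc_J^+$ and of order $2n$ if $x_0\in\rr$.

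Pick a closed disk $\overline V\subset U_0\cap\cc_J$ about $x_0$ on which $G_{f(x_0)}$ vanishes only at $x_0$, contained in $\cc_J^+$ in the non-real case and symmetric under complex conjugation of $\cc_J$ in the real case. Hurwitz's theorem then produces $\varepsilon>0$ such that, for every $c\in B(f(x_0),\varepsilon)$, the zeros of $G_c$ in $V^\circ$ have orders summing to $n$ (respectively $2n$). Choose a neighborhood $U_2$ of $x_0$ in $\OO$ with $f(U_2)\subseteq B(f(x_0),\varepsilon)$, and set $U_1:=\OO_{\phi_J^{-1}(V^\circ)}\cap U_0$, a neighborhood of $\s_{x_0}$ and hence of $x_0$. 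Theorems~\ref{zerosonsphere} and~\ref{thm:fibers} identify the spherical components of $V(f-f(x_1))\cap U_1$ with the zeros of $G_{f(x_1)}$ in $V^\circ\cap\cc_J^+$ (or in $V^\circ$), and the order-to-multiplicity translation above, halving the even orders at real zeros, makes the sum of total multiplicities equal exactly to $n$.

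The principal technical obstacle is to ensure that $V(f-f(x_1))\cap U_1$ contains no $6$-sphere component, so that ``finitely many zeros in $U_1$'' may be interpreted literally. Here the hypothesis $x_0\notin D_f$ is decisive. When $x_0\in\OO\setminus\rr$, it enforces $f'_s(x_0)\neq0$, and continuity allows shrinking $U_1$ so that $U_1\cap D_f=\emptyset$, automatically excluding sphere components. When $x_0\in\rr$, $D_f$ may still accumulate at $x_0$; one then combines the discreteness of $\overline D_f\setminus D_f$ in $\OO\cap\rr$ (observed after Proposition~\ref{prop:singularset}) with the continuity of $\vs f$ to arrange, upon further shrinking $U_1$ and $U_2$, that no $y\in D_f\cap U_1$ satisfies $\vs f(y)=f(x_1)$, which would have been the only mechanism producing an entire sphere in the zero set.
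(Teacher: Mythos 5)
Your first assertion and the general strategy for the second (restrict the slice-preserving function $N(f-c)$ to a slice $\cc_J$, translate orders of vanishing into total multiplicities, and apply a Hurwitz--Rouch\'e continuity argument) coincide with the paper's. The gap lies exactly where the paper spends most of its effort. Your Hurwitz count on $V^\circ\subset\cc_J$ only controls \emph{which spheres} $\s_y$, $y\in V^\circ$, carry zeros of $f-f(x_1)$ and with what total multiplicities; it says nothing about \emph{where on those $6$-spheres} the zeros sit. For $y\notin D_f$ the unique zero on $\s_y$ is $\re(y)+(f(x_1)-\vs f(y))f'_s(y)^{-1}$, which a priori may lie anywhere on $\s_y$ (for instance near $y^c$), far from $x_0$. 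Since $U_0$ is an arbitrary, possibly tiny and non-circular, neighborhood of $x_0$, your set $U_1=\OO_{\phi_J^{-1}(V^\circ)}\cap U_0$ (which, incidentally, is a neighborhood of $x_0$ but in general \emph{not} of $\s_{x_0}$, since $U_0$ need not contain $\s_{x_0}$) may miss some of these zeros, so the sum of the total multiplicities of the zeros lying in $U_1$ could be strictly smaller than $n$; replacing $U_1$ by the whole circular tube restores the count but destroys the required inclusion $U_1\subseteq U_0$. This is precisely where the paper uses $x_0\notin D_f$ beyond excluding degenerate spheres: on a tube $T^{r_0}$ around $\s_{x_0}$ one has $f'_s\neq0$, and writing $f=\vs f+\im\cdot f'_s$, the equality $f(x_k)=f(x_1)$ for a zero $x_k=\alpha_k+\beta_kJ_k$ gives $J_k=\left(\vs f(x_1)-\vs f(x_k)+\beta_1J_1f'_s(x_1)\right)\left(\beta_kf'_s(x_k)\right)^{-1}$, whence a uniform bound $|J_k-J_1|<r_0$; this traps all the zeros in $T^{r_1}\cap C^{r_0+r_2}$, where $C^{r}=\bigcup_{|J-J_0|<r}\cc_J$ is a cone of slices around $\cc_{J_0}$, and such a set (unlike a circular tube) can be fitted inside the given $U_0$. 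Your proposal contains no substitute for this estimate, so the key conclusion ``$U_1\subseteq U_0$ and the multiplicities of the zeros in $U_1$ sum to $n$'' is not established.

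A secondary point: the plan in your last paragraph to exclude sphere components of $V(f-f(x_1))$ near a real $x_0$ cannot be carried out. For $f(x)=x^2$ and $x_0=0\in N_f$, every neighborhood $U_2$ of $0$ contains points $x_1=\epsilon I$ whose fiber is the whole sphere $\epsilon\,\s\subseteq D_f$, and $\vs f(y)=f(x_1)$ holds with $y=x_1\in D_f\cap U_1$; no shrinking of $U_1,U_2$ avoids this. The count has to be understood, as in the paper's proof, in terms of isolated points or isolated spheres of zeros weighted by their total multiplicities; whole spheres in nearby fibers are unavoidable at real points of $N_f$ and simply contribute their total multiplicity to the sum equal to $n$.
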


\begin{proof}
Since $x_0\not\in W_f$, Theorem~\ref{thm:fibers} guarantees that $N(f-f(x_0))\not\equiv0$, whence the total multiplicity of $f-f(x_0)$ at $\s_{x_0}$ is a finite number $n$. On the other hand, since $x_0\in N_f$, Theorem~\ref{thm:multiplicityatsingularity} implies that $n\geq2$.

Now suppose $x_0\not\in D_f$ and let $U_0$ be any neighborhood of $x_0=\alpha_0+\beta_0J_0$ in $\OO$. For all $r>0$, consider the closed disc $D^r:=\overline{\Delta(\alpha_0+i\beta_0,r)}\subset\cc$ and its circularization $T^r=\OO_{D^r}$, which is a neighborhood of $\s_{x_0}$. Let us also consider the cone $C^r=\bigcup_{|J-J_0|<r}\cc_J$. We use different arguments depending on whether or not $x_0$ belongs to the real axis.
\begin{itemize}
\item Suppose $x_0\in\OO\setminus\rr$. There exists $r_0$ such that: the inclusions $T^{r_0}\subset\OO\setminus\rr, T^{r_0}\cap C^{2r_0}\subseteq U_0$ hold; $N(f-f(x_0))$ never vanishes in $T^{r_0}\setminus\s_{x_0}$; and $f'_s$ never vanishes in $T^{r_0}$. Let
\[m:=\min_{x\in T^{r_0}}|\im(x)f'_s(x)|\,.\]
There exists $r_1$ with $0<r_1\leq r_0$ such that
\[|\vs f(\alpha+\beta J)-\vs f(\alpha'+\beta'J')|+|\beta f'_s(\alpha+\beta J)-\beta'f'_s(\alpha'+\beta'J')|\leq r_0 m\]
for all $\alpha+\beta J,\alpha'+\beta'J'\in T^{r_1}$ with $\beta,\beta'\geq0$.

We claim that there exists $r_2$ with $0<r_2\leq r_1$ having the following property: for each $x_1\in T^{r_2}\cap C^{r_2}$, the distinct zeros $x_1,\ldots x_h$ of the function $f-f(x_1)$ in $T^{r_1}$ have multiplicities whose sum equals $n$.

Our claim can be proven as follows. Let us denote the restriction of $N(f-f(x_1))$ to $(T^{r_1})^+_J$ by $\phi_{x_1}$ and think of it as a holomorphic function of one complex variable. Since $\phi_{x_0}$ has multiplicity $n$ at $x_0$ and no other zeros, there exists $r_2$ with $0<r_2\leq r_1$ such that the sum of the multiplicities of the zeros of $\phi_{x_1}$ equals $n$ for all $x_1\in T^{r_2}\cap C^{r_2}$. If this were not true, we could construct a sequence of holomorphic functions contradicting Hurwitz's Theorem~\cite[Theorem 2.5]{libroconway}. Thus, the sum of the total multiplicities of the zeros of $f-f(x_1)$ in $T^{r_1}$ equals $n$ for any $x_1\in T^{r_2}\cap C^{r_2}$.

The claim thus established, for each $k\in\{1,\ldots,h\}$, let $\alpha_k,\beta_k\in\rr$ (with $\beta_k\geq0$) be such that $x_k=\alpha_k+J_k\beta_k$. Each equality $f(x_k)=f(x_1)$ implies that
\[J_k=\left(\vs f(x_1)-\vs f(x_k)+\beta_1J_1f'_s(x_1)\right)(\beta_kf'_s(x_k))^{-1}\,,\]
whence
\[J_k-J_1=\left(\vs f(x_1)-\vs f(x_k)+J_1(\beta_1f'_s(x_1)-\beta_kf'_s(x_k))\right)(\beta_kf'_s(x_k))^{-1}\]
and $|J_k-J_1|<r_0 m m^{-1}=r_0$. It follows that $|J_k-J_0|\leq|J_k-J_1|+|J_1-J_0|<r_0+r_2$, whence $x_1,\ldots,x_k$ all belong to $T^{r_1}\cap C^{r_0+r_2}$. Now consider the neighborhoods $U_1:=T^{r_1}\cap C^{r_0+r_2}$ and $U_2:=T^{r_2}\cap C^{r_2}$ of $x_0$. It holds that $U_0\supseteq T^{r_0}\cap C^{2r_0}\supseteq U_1\supseteq U_2$, as desired.
\item If $x_0\in\OO\cap\rr$, then $\s_{x_0}=\{x_0\}$ and each $T^r$ is the Euclidean ball of radius $r$ centered at $x_0$. There exists $r_0$ such that $T^{r_0}$ is included in $U_0$ and such that $N(f-f(x_0))$ never vanishes in $T^{r_0}\setminus\{x_0\}$. Arguing as above, we can prove that there exist $r_1,r_2$ with $0<r_2\leq r_1\leq r_0$ with the following property: for all $x_1\in T^{r_2}$, the distinct zeros $x_1,\ldots x_h$ of the function $f-f(x_1)$ in $T^{r_1}$ have multiplicities whose sum equals $n$. If we set $U_1:=T^{r_1}$ and $U_2:=T^{r_2}$, the thesis immediately follows.\qedhere
\end{itemize}
\end{proof}

We are now in a position to prove the result we announced at the beginning of this section.

\begin{theorem}
Let $\OO$ be either a slice domain or a product domain and let $f:\OO\to\oo$ be a slice regular function. Then $N_f$ is the branch set of $f$. More precisely:
\begin{enumerate}
\item for every point $x_0\in\OO\setminus N_f$, there exists an open neighborhood $U$ of $x_0$ in $\OO$ such that $f(U)$ is open and $f_{|_U}:U\to f(U)$ is a diffeomorphism; and
\item for every point $x_0\in N_f$ and for every neighborhood $U$ of $x_0$ in $\OO$, $f_{|_U}$ is not injective.
\end{enumerate}
If, moreover, the restriction $f_{|_{\Omega\setminus N_f}}:\OO\setminus N_f \to f(\OO\setminus N_f)$ is proper, then $f:\OO\to f(\OO)$ is a branched covering.
\end{theorem}

\begin{proof}
If $f$ is slice constant, then property {\it 2} holds at each point of $N_f=\Omega$. We assume henceforth $f$ not to be slice constant. Then its singular set $N_f$ is a closed subset of $\OO$ whose interior is empty by Proposition~\ref{prop:singularset}. If $x_0\in\OO\setminus N_f$, then the Implicit Function Theorem implies that there exists an open neighborhood $U$ of $x_0$ in $\OO$ such that $f_{|_U}:U\to f(U)$ is a diffeomorphism. Let us prove that every $x_0\in N_f$ is a branch point, i.e., property {\it 2}.

If $x_0$ belongs to $D_f\subseteq N_f$ then it is a branch point because $f$ is constant on the $6$-sphere $\s_{x_0}$ through $x_0$.

If $x_0$ belongs to $W_f\subseteq N_f$ then it is a branch point because $f$ is constant on the wing $W_{f,f(x_0)}$, which is a $2$-surface through $x_0$.

If $x_0\in N_f\setminus(D_f\cup W_f)$, we can prove that $x_0$ is a branch point as follows. If $U_0$ is any neighborhood of $x_0$ in $\OO$ then, by the previous lemma, there exist a number $n\geq2$ and neighborhoods $U_1,U_2$ of $x_0$ (with $U_0\supseteq U_1\supseteq U_2$) such that, for all $x_1\in U_2$, the sum of the total multiplicities of the zeros of $f-f(x_1)$ in $U_1$ equals $n$. We can observe that, for all $x_1\in U_2\setminus N_f$, the total multiplicity of $f-f(x_1)$ at $x_1$ equals $1$ by Theorem~\ref{thm:multiplicityatsingularity}. Thus, $f-f(x_1)$ vanishes not only at $x_1$ but also at some other point of $U_1$. In particular, $f$ is not injective in $U_0$.

Since the fibers of the restriction $f_{|_{\Omega\setminus N_f}}$ are discrete, if this restriction is a proper map from $\Omega\setminus N_f$ to $f(\Omega\setminus N_f)$, then it is a covering. In such a case, $f:\Omega\to f(\Omega)$ is a branched covering with branch set $N_f$.
\end{proof}

We can draw the following consequence about the push-forwards of the almost-complex structure $\J$ presented in Definition~\ref{def:standardstructure}.

\begin{corollary}
Let $g:\OO\to\oo$ be a slice regular function and pick an open subset $U$ of $\OO\setminus\rr$. The push-forward
\[\J^f_{f(x_0)}:=df_{x_0}\circ\J_{x_0}\circ df_{x_0}^{-1}\]
of the structure $\J_{|_U}$ through $f:=g_{|_U}$ is well-defined on $f(U)$ if, and only if, $f$ is injective.
\end{corollary}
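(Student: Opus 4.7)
The plan is to leverage the preceding branched covering theorem. Two conditions are required for the push-forward to be well-defined on $f(U)$: first, $df_{x_0}$ must be invertible at every $x_0\in U$, so that the formula $df_{x_0}\circ\J_{x_0}\circ df_{x_0}^{-1}$ makes sense as an endomorphism of $T_{f(x_0)}\oo=\oo$; second, each $y\in f(U)$ must admit a unique preimage in $U$, so that the formula picks out a single unambiguous value without choice among several candidates.

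For the ``if'' direction, I would first observe that a slice constant $g$ satisfies neither condition on any open $U\subseteq\OO\setminus\rr$: it is locally constant on every open arc of a single slice, hence non-injective, and $f'_c\equiv0$ forces $df_x$ to be non-invertible at every $x$ (Theorem~\ref{thm:rankofdifferential} together with Corollary~\ref{cor:singularset}). So, assuming $f$ injective, $g$ is not slice constant, and the preceding branched covering theorem applies. Its property \emph{2}---that every point of $N_g$ is a branch point in every neighborhood of which $g$ fails to be injective---combined with the assumed injectivity of $f$ on $U$, forces $U\cap N_g=\emptyset$; Corollary~\ref{cor:singularset} then yields invertibility of $df_{x_0}$ at every $x_0\in U$. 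The injectivity of $f$ gives the unique-preimage condition for free, so the formula unambiguously defines $\J^f_y$ at each $y\in f(U)$.

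For the ``only if'' direction, well-definedness immediately implies invertibility of each $df_{x_0}$, hence $U\cap N_g=\emptyset$, and it implies uniqueness of preimages, since otherwise the defining formula would depend on an arbitrary choice between two distinct preimages of some $y$, contradicting unambiguity. The main obstacle I foresee is an interpretive one: under the strict reading that ``well-defined'' means ``specified without any choice,'' non-injectivity immediately violates this and the argument closes cleanly; a weaker reading, allowing multiple preimages to coincidentally produce the same endomorphism, would require a finer analysis via Theorem~\ref{thm:inducedacs}---examining the constraints on the splitting $\oo=\cc_{J_i}f'_c(x_i)\oplus\cc_{J_i}^\perp f'_s(x_i)$ and the action of $\mc{J}_0^{(i)}$ on each summand---to show that such coincidence cannot persist throughout $f(U)$ when $f$ is non-injective.
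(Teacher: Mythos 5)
Your proof is correct and takes essentially the same route the paper intends: the corollary is presented as an immediate consequence of the branched covering theorem (whose property \emph{2} forces $U\cap N_g=\emptyset$ when $f$ is injective, after excluding slice constant $g$, which is nowhere injective), so that every $df_{x_0}$ is invertible and the formula is unambiguous, while the converse is understood in the strict sense of well-definedness (invertible differentials and a unique preimage for each value), exactly as you argue. Your closing caveat about the weaker reading is apt, but the paper's statement is meant in the strict sense, so no finer analysis via Theorem~\ref{thm:inducedacs} is required.
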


This allows us to complete the results of Section~\ref{sec:inducedacs} about induced almost-complex structures. Indeed, Theorem~\ref{thm:inducedacs}, Theorem~\ref{thm:inducedoacs} and Remark~\ref{rmk:oneslicepreserving} have the following consequence.

\begin{corollary}
Let $g:\OO\to\oo$ be a slice regular function that is not slice constant and pick an open subset $U$ of $\OO\setminus\rr$ such that $f:=g_{|_U}$ is injective. If $x_0\in U_J$, then the equality
\[\J^f_{f(x_0)}(p+q):=Jp + (J(qf'_s(x_0)^{-1}))f'_s(x_0)\]
holds for all $p\in\cc_J\,f'_c(x_0)$ and all $q\in\cc_J^\perp\,f'_s(x_0)$. The push-forward $\J^f$ is an almost-complex structure on $f(U)$ and $f$ is a holomorphic map between the almost-complex manifolds $(U,\J)$ and $(f(U),\J^f)$. Moreover, $\J^f$ is orthogonal if, and only if, $(x_0,f'_c(x_0),f'_s(x_0))=0$ for all $x_0\in U$ (which is always the case if $g(\Omega_J)\subseteq\cc_J$ for some $J\in\s$). Finally, $\J^f=\J$ if $g$ is slice preserving (and the converse implication holds when $U=\OO\setminus\rr$).
\end{corollary}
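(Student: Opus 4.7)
The plan is to assemble the statement from the previous results of this section and the earlier sections. First, I would verify that the push-forward $\J^f$ is well-defined on $f(U)$. Because $g$ is not slice constant, the branched covering theorem just proved identifies $N_f$ as the branch set of $f$; in particular, $f$ fails to be injective in any neighborhood of any point of $N_f$. Injectivity of $f_{|_U}$ therefore forces $U \cap N_f = \emptyset$, so Theorem~\ref{thm:rankofdifferential} guarantees that $df_{x_0}$ is invertible for every $x_0 \in U$. Consequently, $f \colon U \to f(U)$ is a real-analytic diffeomorphism onto an open subset of $\oo \setminus \rr$, and the pointwise conjugation $\J^f_{f(x_0)} := df_{x_0} \circ \J_{x_0} \circ df_{x_0}^{-1}$ assembles into a real-analytic field of endomorphisms squaring to $-\mathrm{id}$, that is, an almost-complex structure on $f(U)$.

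The explicit formula for $\J^f_{f(x_0)}$ and the holomorphy of $f \colon (U,\J) \to (f(U),\J^f)$ follow by applying Theorem~\ref{thm:inducedacs} pointwise: at each $x_0 \in U_J$ the hypotheses of that theorem are fulfilled (since $x_0 \notin N_f$), and the unique constant complex structure $\mc{J}_0$ it produces agrees with $\J^f_{f(x_0)}$ through the intertwining $\mc{J}_0 \circ df_{x_0} = df_{x_0} \circ \J_{x_0}$, which is exactly the definition of holomorphy. The orthogonality claim is handled pointwise by Theorem~\ref{thm:inducedoacs}, yielding the associator condition $(x_0, f'_c(x_0), f'_s(x_0)) = 0$ at each $x_0 \in U$; Remark~\ref{rmk:oneslicepreserving} supplies the stated sufficient condition $g(\OO_J) \subseteq \cc_J$ for some $J$.

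For the last claim, if $g$ is slice preserving then $\vs f$ and $f'_s$ are real-valued, so at every $x_0 = \alpha + \beta J$ with $\beta > 0$ the formula from Theorem~\ref{thm:inducedacs} collapses to $\J^f_{f(x_0)}(p+q) = Jp + Jq = L_J(p+q)$; moreover $f(x_0) = \vs g(x_0) + \beta\, g'_s(x_0)\, J$ lies in $\cc_J$, and after adjusting the sign convention so that $f(x_0) \in \cc_J^+$, we have $\J_{f(x_0)} = L_J = \J^f_{f(x_0)}$. For the converse, with $U = \OO \setminus \rr$, the assumption $\J^f = \J$ forces orthogonality, whence the associator vanishes everywhere and Theorem~\ref{thm:inducedacs} rewrites $\J^f_{f(x_0)} = R_{f'_s(x_0)} \circ L_J \circ R_{f'_s(x_0)^{-1}}$; equating this with $\J_{f(x_0)} = L_{\im(f(x_0))/|\im(f(x_0))|}$ and invoking Proposition~\ref{prop:compositionR} forces $f'_s(x_0) \in \cc_J$ for every $x_0 \in \cc_J^+$, and then Formulas~\eqref{eq:partialderivatives} propagate this to the conclusion that $\vs f$ is also $\cc_J$-valued on each slice, i.e., $g$ is slice preserving.

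The main obstacle will be the converse direction in the last paragraph: deducing from the global equality $\J^f = \J$ that $f'_s$ takes real values requires separating carefully the associative and non-associative contributions in Proposition~\ref{prop:compositionR}, and tracking how the sign conventions behind $\J_{f(x_0)}$ behave consistently as $x_0$ ranges over the various $\cc_J^+$. The forward direction also hides a minor sign subtlety (aligning $f(x_0) \in \cc_J^+$ with the parameterization of $x_0$), but this is a bookkeeping matter rather than a genuine difficulty.
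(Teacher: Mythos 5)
Your assembly of the first three assertions is exactly the derivation the paper intends (it states this corollary without proof, as a consequence of the preceding corollary together with Theorem~\ref{thm:inducedacs}, Theorem~\ref{thm:inducedoacs} and Remark~\ref{rmk:oneslicepreserving}): injectivity plus the branched-covering theorem excludes the singular set from $U$, Theorem~\ref{thm:rankofdifferential} gives invertibility of $df_{x_0}$, and the formula, the holomorphy and the orthogonality criterion follow pointwise. That part is fine.

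The treatment of the final claim, however, has a genuine gap: the ``sign subtlety'' you dismiss is not bookkeeping, because there is no convention left to adjust. Both $\J^f_{f(x_0)}=df_{x_0}\circ\J_{x_0}\circ df_{x_0}^{-1}$ and $\J_{f(x_0)}$ are canonically defined endomorphisms of $\oo$; replacing $J$ by $-J$ in the formula changes neither of them. If $g$ is slice preserving and $x_0=\alpha+\beta J\in U$ with $\beta>0$, your computation correctly gives $\J^f_{f(x_0)}=L_J$, but $\im(f(x_0))=\beta f'_s(x_0)J$ with $f'_s(x_0)$ real and nonzero, so $\J_{f(x_0)}=L_{\epsilon J}$ where $\epsilon$ is the sign of $f'_s(x_0)$. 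Slice preservation does not force $f'_s>0$: for $g(x)=-x$ one has $f'_s\equiv-1$ and hence $\J^f=-\J$ on $f(U)$; likewise $g(x)=x^2$ restricted to the injective open set $\{\re(x)<0\}\setminus\rr$ has $g'_s=t(x)<0$ and again $\J^f=-\J$. Thus the claimed equality $\J^f_{f(x_0)}=\J_{f(x_0)}$ holds exactly at the points where $g'_s(x_0)>0$ (in general one only gets $\J^f=\pm\J$, locally constant sign), and no ``adjustment of the sign convention so that $f(x_0)\in\cc_J^+$'' can repair the argument; the positivity of $g'_s$ must be recorded explicitly (your own converse computation produces precisely this extra datum, since $\J^f=\J$ forces $\im(f(x_0))$ to be a \emph{positive} multiple of $J$). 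Relatedly, the converse as you sketch it is incomplete at the last step: once $f'_s$ is known to be real-valued (constancy on the spheres $\s_{x_0}$ plus $f'_s(x_0)\in\cc_J$ for all admissible $J$), Formulas~\eqref{eq:partialderivatives} only yield that the partial derivatives of $\vs f$ are real, hence that $\im(\vs f)$ is locally constant, not zero. To kill this constant you must use the hypothesis again: $\im(f(x_0))=\im(\vs f(x_0))+\beta f'_s(x_0)J$ has to be a positive real multiple of $J$ for every $J\in\s$, which forces $\im(\vs f)\equiv0$ and $f'_s>0$. These are the points you need to supply (and the caveat on the sign is one the statement itself glosses over), since the paper offers no written proof to fall back on.
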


We point out that, since $\J$ is not integrable, the induced structure $\J^f$ is not integrable in general.

We conclude our work with an explicit example, which extends to the octonions the main example of~\cite{ocs}.

\begin{example}
Consider the octonionic polynomial $x^2+xi$, which maps $\rr$ bijectively into a parabola $\gamma\subset\cc\subset\oo$. By direct computation,
\[f'_s(x)=t(x)+i,\quad f'_c(x)=2x+i\,.\]
In particular, $D_f=\emptyset$ and 
\[f'_c(\alpha+\beta J)f'_s(\alpha+\beta J)^{-1}=\frac{1+4\alpha^2+2\beta\langle J,i\rangle+4\alpha\beta J-2\beta J\times i}{1+4\alpha^2}\]
belongs to $\cc_J^\perp$ if, and only if, $\alpha=0$ and $\beta\langle J,i\rangle=-\frac12$. Thus,
\[N_f=-\frac{i}2+j\rr+k\rr+\ell\rr+\ell i \rr+\ell j \rr+\ell k \rr\,.\]
For all $x_0\in\oo\setminus N_f$, the preimage of $f(x_0)$ includes exactly two points: $x_0$ and $x_1=i-x_0$. On the other hand, $f$ is one-to-one from $N_f$ to $\Gamma:=f(N_f)$. Let us denote by $f^+$ the restriction of $f$ to $\oo^+=\{x\in\oo:\re(x)>0\}$ minus $\rr$ and by $f^-$the restriction of $f$ to $\oo^-=\{x\in\oo:\re(x)<0\}$ minus $\rr$. The functions $f^+$ and $f^-$ are injective and their ranges both equal
\[\oo\setminus(\gamma\cup\mathfrak{G})\,,\]
where $\mathfrak{G}$ is the image $f(\im(\oo))$ of the $7$-dimensional vector space $\im(\oo)$ bounding $\oo^+$ and $\oo^-$. The induced $\J^{f^+},\J^{f^-}$ are distinct almost-complex structures on $\oo\setminus(\gamma\cup\mathfrak{G})$. They are orthogonal because $f(\cc_i)\subseteq\cc_i$.
\end{example}

%%%%%%%%%%%%%%%%%%%%%%%%%%%%%
%% BIBLIOGRAPHY

\vfill

%%%%%%%%%%%%%%%%%%%%%%%%%%%%%

\end{document}